\newtheorem{thm}{Theorem}
\newtheorem{prop}[thm]{Proposition}
\newtheorem{lem}[thm]{Lemma}
\newtheorem{cor}[thm]{Corollary}
\newtheorem{clm}[thm]{Claim}
\theoremstyle{definition}
\newtheorem{dfn}[thm]{Definition}
\newtheorem{rem}[thm]{Remark}
\newtheorem{ex}[thm]{Example}
\newtheorem{prob}[thm]{Problem}
\theoremstyle{remark}
\newtheorem*{org}{Organization}
\newtheorem*{ack}{Acknowledgment}
\numberwithin{thm}{section}
\numberwithin{equation}{section}
\DeclareMathOperator{\dist}{dist}
\DeclareMathOperator{\diam}{diam}
\DeclareMathOperator{\vol}{vol}
\DeclareMathOperator{\id}{id}
\DeclareMathOperator{\pr}{pr}
\DeclareMathOperator{\Cl}{Cl}
\title[Application of good coverings]{Application of good coverings to collapsing Alexandrov spaces}
\author[T. Fujioka]{Tadashi Fujioka}
\address{Department of Mathematics, Kyoto University, Kitashirakawa, Kyoto 606-8502, Japan}
\email{\href{mailto:tfujioka@math.kyoto-u.ac.jp}{tfujioka@math.kyoto-u.ac.jp}, \href{mailto:tfujioka210@gmail.com}{tfujioka210@gmail.com}}
\date{\today}
\subjclass[2010]{53C20, 53C23}
\keywords{Alexandrov spaces, collapse, good coverings, extremal subsets}
\thanks{Supported in part by JSPS KAKENHI Grant Number 15H05739}
\begin{document}

\begin{abstract}
Let $M$ be an Alexandrov space collapsing to an Alexandrov space $X$ of lower dimension.
Suppose $X$ has no proper extremal subsets and let $F$ denote a regular fiber.
We slightly improve the result of Perelman to construct an infinitely long exact sequence of homotopy groups and a spectral sequence of cohomology groups for the pair $(M,X,F)$.
The proof is an application of the good coverings of Alexandrov spaces introduced by Mitsuishi-Yamaguchi.
We also extend this result to each primitive extremal subset of $X$.
\end{abstract}

\maketitle

\section{Introduction}\label{sec:intro}

It is well-known that the class of Alexandrov spaces with a lower bound on curvature and with upper bounds on dimension and diameter is compact with respect to the Gromov-Hausdorff topology.
Thus, in order to understand the topology of spaces in this class, it is important to study the relation between the topology of a fixed Alexandrov space $X$ and that of a nearby Alexandrov space $M$.
In the noncollapsing case, i.e.\ $\dim X=\dim M$, Perelman's stability theorem (\cite{Per:alex}, cf.\ \cite{K:stab}) tells us that $M$ is homeomorphic to $X$.
On the other hand, in the collapsing case, i.e.\ $\dim X<\dim M$, there is no general theory of the structure of $M$.
The results in low dimensions (\cite{SY:3dim}, \cite{SY:vol}, \cite{Y:4dim}, \cite{MY:3dim}) suggest that $M$ has a fibration structure over $X$ in some generalized sense, where the singular fibers arise over the singular strata of $X$.
Therefore, if the singularities of $X$ are not so bad, one could expect that $M$ admits a fibration structure over $X$ in the usual sense.
Let us formulate this problem as follows.
Fix an upper bound $n$ for dimension and a lower bound $\kappa$ for curvature.

\begin{prob}\label{prob:main}
Let $X$ be a $k$-dimensional compact Alexandrov space satisfying some regularity condition, where $k<n$.
Suppose $\mu>0$ is small enough and let $M$ be an $n$-dimensional Alexandrov space that is $\mu$-close to $X$ with respect to the Gromov-Hausdorff distance.
Find a fibration structure of $M$ over $X$.
\end{prob}

There are two remarkable results for this problem.
One is Yamaguchi's fibration theorem (\cite{Y:col}), which asserts that if $X$ and $M$ are both Riemannian manifolds, then $M$ admits a structure of locally trivial fibration over $X$.
In the context of Alexandrov spaces, the assumption that $X$ is a Riemannian manifold is replaced by the one that every point of $X$ is $(k,\delta)$-strained.
Although it is expected that under this assumption the theorem holds for a general Alexandrov space $M$, the most general case is still open (see \cite{Y:conv}, \cite{RX}, \cite{X}, \cite{XY}, \cite{F:fibr}).

The other remarkable result is due to Perelman \cite{Per:col}, which concerns the case where $X$ has no proper extremal subsets.
Roughly speaking, this condition means that $X$ has no singular strata.
Hence it is much weaker than Yamaguchi's assumption and is actually optimal.
Under this assumption Perelman showed that $M$ admits a structure of Serre fibration in a certain weak sense.
In particular, he constructed a homotopy exact sequence for $M$, $X$, and a regular fiber (that is, a fiber over a regular point of $X$).
The precise statement is the following.
Let $\pi_i$ denote the $i$-th homotopy group.

\begin{thm}[\cite{Per:col}]\label{thm:per}
Suppose $X$ has no proper extremal subsets.
Let $F$ denote a regular fiber in $M$.
Fix an integer $j\ge1$.
Then, if $\mu$ is small enough, there exist isomorphisms $\pi_i(M,F)\cong\pi_i(X) $ for all $i\le j$.
In particular, we have a long exact sequence
\[\pi_j(F)\to\pi_j(M)\to\pi_j(X)\to\pi_{j-1}(F)\to\cdots.\]
\end{thm}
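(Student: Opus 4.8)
The plan is to pull back a good covering of $X$ to $M$ and argue that the resulting covering exhibits the collapsing map $M\to X$ as a quasifibration, at least through dimension $j$, after which the homotopy exact sequence is immediate.

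First I would invoke the good coverings of Mitsuishi--Yamaguchi: $X$ admits a finite covering $\mathcal U=\{U_\alpha\}_{\alpha\in A}$ by metric balls all of whose nonempty intersections $U_{\alpha_0\cdots\alpha_p}:=U_{\alpha_0}\cap\cdots\cap U_{\alpha_p}$ are contractible, so that, writing $N$ for its nerve, $\vert N\vert\simeq X$ by the nerve theorem. Since good balls and their intersections are stable under Gromov--Hausdorff convergence, for $\mu$ small there is a continuous $\mu$-approximation $f\colon M\to X$ together with a covering $\widetilde{\mathcal U}=\{\widetilde U_\alpha\}$ of $M$ by metric balls with centers and radii close to those of the $U_\alpha$, having the same nerve $N$, and satisfying $f(\widetilde U_{\alpha_0\cdots\alpha_p})\subset U_{\alpha_0\cdots\alpha_p}$ and $f^{-1}(U_{\alpha_0\cdots\alpha_p})\subset\widetilde U_{\alpha_0\cdots\alpha_p}$ up to a controlled error.

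The crucial local input I would establish is this: having fixed $j$ and taken $\mu$ small enough, for every nonempty simplex of $N$ and every point $x$ in the corresponding intersection $U_{\alpha_0\cdots\alpha_p}$, the inclusion $f^{-1}(x)\hookrightarrow\widetilde U_{\alpha_0\cdots\alpha_p}$ induces isomorphisms on $\pi_i$ for $i\le j$, and any two such fibers, in particular each one and the regular fiber $F$, are related the same way. This is where the hypothesis of no proper extremal subsets enters: over a good ball of such an $X$ all fibers are homotopy equivalent (through the relevant range), and Perelman's local structure of collapse --- the gradient-flow techniques giving a local fibration over a region with no singular behavior --- produces a fiberwise deformation retraction of $\widetilde U_{\alpha_0\cdots\alpha_p}$ onto a single fiber, connecting any two fibers over the ball and identifying each, through dimension $j$, with $F$. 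I expect this local statement to be the main obstacle; the rest is formal homotopy theory of coverings.

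Granting it, $f$ over each $U_{\alpha_0\cdots\alpha_p}$ is a quasifibration through dimension $j$ with fiber equivalent, in that range, to $F$. Since the family $\{U_{\alpha_0\cdots\alpha_p}\}$ covers $X$ and is closed under intersection, I would then apply the Dold--Thom gluing criterion for quasifibrations by induction on the number of good balls --- each inductive step being a homotopy pushout along maps that are equivalences (in the range $\le j$) over the overlap --- to conclude that $f\colon M\to X$ is a quasifibration through dimension $j$. Placing the base point in a regular fiber, this yields isomorphisms $f_*\colon\pi_i(M,F)\xrightarrow{\cong}\pi_i(X)$ for all $i\le j$, and substituting these into the long exact sequence of the pair $(M,F)$ gives the asserted exact sequence. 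The same covering would at once produce a Mayer--Vietoris spectral sequence converging to $H^*(M)$ with $E_1$-page assembled from $H^*(F)$ over the nerve, which is the cohomological statement mentioned in the introduction; and improving the local input to a genuine homotopy equivalence, uniformly in $j$, is what upgrades the conclusion to an infinitely long exact sequence.
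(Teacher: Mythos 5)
There is a genuine gap, and it sits at the very foundation of your plan: you assume the existence of a \emph{continuous} map $f\colon M\to X$ that is a $\mu$-approximation, and you then work with its point-preimages $f^{-1}(x)$ as ``fibers'' and apply the Dold--Thom criterion to conclude that $f$ is a quasifibration through dimension $j$. But in the collapsing situation no such continuous projection is available: the Gromov--Hausdorff approximation $\theta\colon M\to X$ is merely a (generally discontinuous) coarse map, and producing a continuous map $M\to X$ whose point-preimages have the homotopy type of the regular fiber is essentially the content of Problem \ref{prob:main}, which is open; Perelman's theorem is precisely a ``Serre fibration in a weak sense'' because one must avoid ever constructing such a map. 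The regular fiber $F$ in the statement is not a fiber of any globally defined map to $X$; it is a preimage $\hat f^{-1}(f(p))$ under a \emph{lifted admissible map} to $\mathbb R^k$, defined only over a coordinate neighborhood. Consequently the Dold--Thom gluing argument has no object to apply to, and the isomorphism $\pi_i(M,F)\cong\pi_i(X)$ cannot be obtained as $f_*$ for a quasifibration. The actual route (Perelman's, recalled in Section \ref{sec:ser} and refined in Section \ref{sec:prf}) replaces the missing map by a metrically formulated homotopy lifting property with respect to the discontinuous approximation $\theta$ (Corollary \ref{cor:ser}, resp.\ \ref{cor:lift}): one lifts maps $\sigma\colon K\times I\to X$ to maps $\hat\sigma\colon K\times I\to M$ with $|\sigma,\theta\hat\sigma|$ small, and the isomorphism is defined directly by the correspondence $[\sigma]\leftrightarrow[\hat\sigma]$, not by functoriality of any projection.

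A second, related point: the local statement you defer as ``the main obstacle'' --- that over each element of the cover all fibers are equivalent and include into the lifted set as a $\pi_i$-isomorphism --- is not a consequence of gradient-flow techniques giving a local fibration over a ball; no such local fibration over a neighborhood in $X$ is known in this generality. It is exactly Perelman's hard theorem (Theorem \ref{thm:ser}): the set of bad points is extremal, hence empty when $X$ has no proper extremal subsets, and its proof goes through admissible maps, incomplementability, and canonical neighborhoods (the induction of Claim \ref{clm:s}), together with the condition (PN) furnishing product neighborhoods. In the paper this is then combined with a good covering to prove Proposition \ref{prop:shf} (the lifted intersections $\hat U_A$ have the homotopy type of $F$ and the inclusions $\hat U_{A'}\hookrightarrow\hat U_A$ are homotopy equivalences), which is the honest analogue of your local input; but it concerns lifted superlevel sets of concave functions, not point-preimages of a projection. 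Your cohomological remark at the end (a Mayer--Vietoris/\v Cech spectral sequence over the nerve with coefficients $H^\ast(F)$) does match the paper's proof of Theorem \ref{thm:main}(2) once Proposition \ref{prop:shf} is in hand, since that argument needs no map $M\to X$ at all; it is the homotopy-theoretic half of your proposal that cannot be repaired without abandoning the quasifibration framework in favor of the lifting-property argument.
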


Note that the choice of the Gromov-Hausdorff distance $\mu$ depends on the upper bound $j$ on the dimension of homotopy groups.
In this paper, we shall remove this restriction.
Moreover, we construct a cohomology spectral sequence of the pair $(M,X,F)$.
Our main theorem is the following.
Let $H^\ast $ denote the singular cohomology group (with arbitrary coefficients).

\begin{thm}\label{thm:main}
Suppose $X$ has no proper extremal subsets.
Let $F$ denote a regular fiber in $M$.
Then, if $\mu$ is small enough, the following hold.
\begin{enumerate}
\item There exist isomorphisms $\pi_i(M,F)\cong\pi_i(X)$ for all $i$.
In particular, we have an infinitely long exact sequence
\[\cdots\to\pi_{i+1}(X)\to\pi_i(F)\to\pi_i(M)\to\pi_i(X)\to\pi_{i-1}(F)\to\cdots.\]
\item There exists a spectral sequence $\{E_r\}_{r=1}^\infty$ converging to $H^\ast(M)$ whose $E_2$ term is the \v Cech cohomology of a good cover $\mathcal U$ of $X$ with values in a locally constant presheaf $\mathcal H^\ast$ on $\mathcal U$ with group $H^\ast(F)$.
In particular, the Euler characteristics satisfy the product formula
\[\chi(M)=\chi(X)\cdot\chi(F).\]
Furthermore, if $X$ is simply-connected and $H^\ast(F)$ is a finitely generated free module, then we have $E_2=H^\ast(X)\otimes H^\ast(F)$.
\end{enumerate}
\end{thm}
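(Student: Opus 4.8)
The plan is to deduce both statements from the \v Cech/Mayer--Vietoris spectral sequence and the nerve theorem applied to a carefully chosen open cover of $M$, so that the geometry enters through a single local statement about the collapse. By the good covering theorem of Mitsuishi--Yamaguchi, the compact Alexandrov space $X$ admits a finite good cover $\mathcal U=\{U_\alpha\}$: every nonempty finite intersection $U_{\alpha_0\cdots\alpha_p}=U_{\alpha_0}\cap\cdots\cap U_{\alpha_p}$ is contractible, so the nerve theorem gives $X\simeq|\mathcal N(\mathcal U)|$. The members of such a cover are cut out by finitely many distance-type (semiconcave) functions centered at finitely many points of $X$, and these functions vary continuously under Gromov--Hausdorff approximation; choosing points of $M$ near the centers and imposing the same inequalities produces an open cover $\mathcal V=\{V_\alpha\}$ of $M$. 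The key point of the construction is that once $\mu$ is small enough --- in a way that depends only on $\mathcal U$, hence only on $X$ and on $n,\kappa$ --- the cover $\mathcal V$ has the same nerve as $\mathcal U$. Write $V_\sigma=V_{\alpha_0}\cap\cdots\cap V_{\alpha_p}$ for the piece corresponding to the simplex $\sigma=\{\alpha_0,\dots,\alpha_p\}$.

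The geometric heart is the following claim: each nonempty $V_\sigma$ is homotopy equivalent to $F$, and whenever $\sigma\subset\sigma'$ the inclusion $V_{\sigma'}\hookrightarrow V_\sigma$ is a homotopy equivalence. Here one refines Perelman's analysis of collapsing with no proper extremal subsets. Because $U_\sigma$ is one of the \emph{basic} pieces of a Mitsuishi--Yamaguchi cover --- a sub- or superlevel region of a single semiconcave function, in particular contractible and small --- the (gradient-like) flow of that function can be lifted to $M$ and used to trivialize the collapse $V_\sigma\to U_\sigma$, exhibiting $V_\sigma$ as homotopy equivalent to the fiber over any point of $U_\sigma$, in all degrees at once. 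The hypothesis that $X$ has no proper extremal subsets is what makes this local picture available uniformly over the whole cover --- not merely over the regular part of $X$ --- so that the fiber homotopy type is locally constant and hence, $X$ being connected, globally constant and equal to the homotopy type of the chosen regular fiber $F$. Compatibility of these equivalences with the inclusions $V_{\sigma'}\hookrightarrow V_\sigma$ is seen by comparing fibers over a common point of $U_\sigma$. \emph{I expect this step to be the main obstacle}: one must make ``weak fibration'' precise and uniform enough over the finitely many basic pieces to extract an honest diagram of homotopy equivalences, and do so with no restriction on degree --- which is exactly what passing to a good cover of basic pieces gains over the direct approach, where repeated gluing of gradient flows costs a fixed amount of Gromov--Hausdorff control and forces the range $i\le j$ in Theorem~\ref{thm:per}.

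Granting this claim, part (2) is formal. Running the Mayer--Vietoris spectral sequence of the open cover $\mathcal V$ of $M$ gives
\[E_1^{p,q}=\bigoplus_{\dim\sigma=p}H^{q}(V_\sigma)\ \Longrightarrow\ H^{p+q}(M),\qquad E_2^{p,q}=\check H^{p}(\mathcal V;\mathcal H^{q}),\]
where $\mathcal H^{q}$ is the presheaf $\sigma\mapsto H^{q}(V_\sigma)$ with its restriction maps; by the claim $\mathcal H^{q}$ is the locally constant presheaf with group $H^{q}(F)$, and since $\mathcal N(\mathcal V)=\mathcal N(\mathcal U)$ we may regard $\mathcal H^{\ast}$ as a locally constant presheaf on $\mathcal U$, so $E_2^{p,q}=\check H^{p}(\mathcal U;\mathcal H^{q})$ as asserted. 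Since $M$ and $F$ are compact all groups here are finitely generated, and the Euler characteristics of all pages agree with that of $M$, so $\chi(M)=\sum_{p,q}(-1)^{p+q}\dim E_1^{p,q}=\sum_p(-1)^p\sum_{\dim\sigma=p}\chi(F)=\chi(F)\cdot\chi\bigl(|\mathcal N(\mathcal U)|\bigr)=\chi(F)\cdot\chi(X)$. If $X$ is simply connected then so is $|\mathcal N(\mathcal U)|$, hence every locally constant presheaf on it is constant and $E_2^{p,q}=\check H^{p}(\mathcal U;\underline{H^{q}(F)})=H^{p}(X;H^{q}(F))$; if in addition $H^{\ast}(F)$ is a finitely generated free module, the universal coefficient theorem gives $E_2^{p,q}=H^{p}(X)\otimes H^{q}(F)$, i.e.\ $E_2=H^{\ast}(X)\otimes H^{\ast}(F)$.

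Part (1) is likewise formal given the claim. For any open cover, the ambient space is homotopy equivalent to the homotopy colimit over the face poset of the nerve of the diagram of finite intersections; thus $M\simeq\operatorname{hocolim}_\sigma V_\sigma$, while the constant one-point diagram over the same poset realizes $\operatorname{hocolim}_\sigma\{\ast\}\simeq|\mathcal N(\mathcal V)|\simeq X$. By the claim every structure map in the diagram $\sigma\mapsto V_\sigma$ is a homotopy equivalence, so the induced map $M\simeq\operatorname{hocolim}_\sigma V_\sigma\to\operatorname{hocolim}_\sigma\{\ast\}\simeq X$ is, up to homotopy, a fibration with fiber $F$. Its homotopy exact sequence of the pair gives the isomorphisms $\pi_i(M,F)\cong\pi_i(X)$ in every degree and the infinitely long exact sequence, recovering Theorem~\ref{thm:per} with the dimension restriction removed.
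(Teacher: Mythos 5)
The core of your argument --- that every nonempty intersection of the lifted cover is homotopy equivalent to a regular fiber, compatibly with the inclusions $V_{\sigma'}\hookrightarrow V_\sigma$ --- is exactly Proposition \ref{prop:shf} of the paper, and you leave it unproved; worse, the mechanism you sketch for it is not one that can work. There is no projection $V_\sigma\to U_\sigma$ to ``trivialize,'' and lifting the gradient-like flow of the defining concave function $h_A$ to $M$ only shows that $\hat U_A$ deformation retracts onto a small superlevel set $\Cl(\hat U_A(\varepsilon))$; it says nothing about the homotopy type of that small set, which in a collapsed space is precisely the hard point (the flow exists on any $M$, with or without extremal subsets in $X$, so it cannot by itself detect the hypothesis). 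The absence of proper extremal subsets enters through Perelman's theorem (Theorem \ref{thm:ser}) that every point of $X$ is \emph{good}, i.e.\ admits product neighborhoods in $M$ in the sense of condition (PN); the paper's proof sandwiches a regular fiber and such a product neighborhood, $F_A\subset\Cl(\hat U_A(\varepsilon))\subset V_A\subset\hat U_A$, and combines this with the flow retraction, and the compatibility for $A\subset A'$ rests on Lemma \ref{lem:rf} (all regular fibers are homotopy equivalent, via a chain of trivialized coordinate neighborhoods along a shortest path, using convexity of the set of regular points) localized inside $U_A$. Without invoking Perelman's good-point theorem or an equivalent substitute, your ``main obstacle'' is the entire content of the theorem, so as written the proposal has a genuine gap at its central step.

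Granting that claim, your part (2) coincides with the paper's argument (\v Cech/Mayer--Vietoris spectral sequence of the lifted cover, same nerve, locally constant presheaf), but your part (1) is a genuinely different route: the paper instead proves a dimension-free homotopy lifting property with respect to the good cover (Proposition \ref{prop:lift}, Corollary \ref{cor:lift}) and builds the isomorphism $\pi_i(X)\cong\pi_i(M,F)$ by hand, following Perelman. Your homotopy-colimit route (projection lemma for the open cover of $M$, plus the fact that a diagram of weak equivalences over the face poset of the nerve yields a fibration sequence over $|\mathcal N(\mathcal U)|\simeq X$) is plausible and would avoid the lifting argument, but it needs two additional points made explicit: a citation or proof of the Puppe-type rectification result, and an identification of the \emph{concrete} subspace $F\subset M$ with the homotopy fiber --- i.e.\ that $F\hookrightarrow\hat U_A$ is a homotopy equivalence for some cover element and that this is compatible with the canonical map $\hat U_A\to M$ --- since the theorem asserts an isomorphism for the actual pair $(M,F)$, not merely for an abstract homotopy fiber.
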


Note that our result is new even when $M$ is a Riemannian manifold.

The proof of the main theorem uses the good coverings of Alexandrov spaces introduced by Mitsuishi-Yamaguchi \cite{MY:good}.
A good covering of an Alexandrov space is an open covering consisting of superlevel sets of strictly concave functions of a certain type introduced by Perelman \cite{Per:mor} (cf.\ \cite{PP:ext}, \cite{K:reg}, \cite{K:rest}, \cite{Pet:semi}).
It is of course a good cover in the topological sense and has more geometric properties.
Using such coverings, Mitsuishi-Yamaguchi \cite{MY:lip} showed the Lipschitz homotopy finiteness of the class of noncollapsing Alexandrov spaces.
In this paper, we study the relation between good coverings and collapsing.

\begin{rem}\label{rem:pet1}
Petrunin \cite{Pet:semi} announced an alternative proof of Perelman's theorem using gradient flows of semiconcave functions, but the details have never been published as far as the author knows (see \cite[4.2.3, 2.3.4]{Pet:semi}).
\end{rem}

The main theorem can be extended to each primitive extremal subset $E$ of $X$.
Let $\hat E$ denote a lift of $E$ in $M$ (that is, a subset corresponding to $E$ via the Gromov-Hausdorff approximation) and let $U(\hat E,\rho)$ denote the open $\rho$-neighborhood of $\hat E$.
A regular fiber is also defined for $E$ as a fiber over a regular point of $E$.

\begin{thm}\label{thm:ext}
Let $E$ be a primitive extremal subset of $X$ containing no proper extremal subsets.
Let $F$ denote a regular fiber over $E$ in $M$.
Then there exists $\rho>0$ (independent of $M$) such that if $\mu$ is small enough, the following hold.
\begin{enumerate}
\item $\pi_i(U(\hat E,\rho),F)$ is isomorphic to $\pi_i(E)$ for all $i$.
\item there exists a spectral sequence $\{E_r\}_{r=1}^\infty$ converging to $H^\ast(U(\hat E,\rho))$ whose $E_2$ term is the \v Cech cohomology of a good cover of $E$ with values in a locally constant presheaf $\mathcal H^\ast$ with group $H^\ast(F)$.
\end{enumerate}
\end{thm}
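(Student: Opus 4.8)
The plan is to run the proof of Theorem~\ref{thm:main} with $E$ in place of $X$ and the neighborhood $U(\hat E,\rho)$ in place of the collapsing space $M$; the hypothesis that $E$ is primitive with no proper extremal subsets is exactly what lets $E$ play, relative to $U(\hat E,\rho)$, the role that a space without proper extremal subsets plays relative to $M$. First I would fix, by the structure theory of extremal subsets \cite{PP:ext}, a radius $\rho>0$ depending only on $X$ and $E$ (hence independent of $M$) such that $\dist_E$ is critical-point-free on the punctured $\rho$-neighborhood of $E$ in $X$, so that $U(E,\rho)$ deformation retracts onto $E$; I keep $\mu\ll\rho$ throughout.

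Next I would build the good cover. Since $E$ has no proper extremal subsets it admits a good cover $\mathcal U=\{U_\alpha\}$ in the sense of \cite{MY:good}, with each nonempty intersection $U_\sigma$ a superlevel set $\{f_\sigma>0\}$ of a Perelman-type strictly concave function on $E$; I realize the $f_\alpha$ as restrictions to $E$ of functions $\tilde f_\alpha$ concave on $U(E,\rho)$ (distance-type functions, whose gradient flows automatically preserve $E$) and set $\tilde V_\alpha=\{\tilde f_\alpha>0\}\cap U(E,\rho)$. Then $\{\tilde V_\alpha\}$ covers a neighborhood of $E$ in $X$, has nerve $N$ equal to that of $\mathcal U$, and each $\tilde V_\sigma$ contracts onto a point of $E$ --- first flow toward $E$, then flow within $E$ by the contracting gradient flow of $f_\sigma$ --- so $\bigcup_\alpha\tilde V_\alpha\simeq E\simeq|N|$. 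Transplanting $\tilde f_\alpha$ and $\dist_E$ to $M$ along the Gromov--Hausdorff approximation (valid once $\mu\ll\rho$) gives regions $\hat V_\alpha\subset U(\hat E,\rho)$, and one checks, as in Theorem~\ref{thm:main}, that for $\mu$ small $\{\hat V_\alpha\}$ covers $U(\hat E,\rho)$ with the same nerve $N$, uniformly in $M$.

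Each $\hat V_\sigma$ is then handled by lifting to $M$ the two flows above: by Perelman's rescaling-and-gluing technique the region $\hat V_\sigma$ deformation retracts onto the fiber over the point of $E$ to which $\tilde V_\sigma$ contracts, and this fiber is homotopy equivalent to the regular fiber $F$ over $E$ --- which is precisely why $F$, and not the generic fiber over $X$, appears. Thus $U(\hat E,\rho)=\bigcup_\alpha\hat V_\alpha$ is assembled over $N$ out of copies of $F$, compatibly with the assembly of $\bigcup_\alpha\tilde V_\alpha\simeq E$ out of points; comparing the two via the map induced by the approximation yields $\pi_i(U(\hat E,\rho),F)\cong\pi_i(|N|)\cong\pi_i(E)$, which is~(1). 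For~(2), the \v Cech--Mayer--Vietoris spectral sequence of $\{\hat V_\alpha\}$ has $E_1^{p,q}=\bigoplus_{|\sigma|=p}H^q(\hat V_\sigma)\cong\bigoplus_{|\sigma|=p}H^q(F)$, hence $E_2^{p,q}=\check H^p(\mathcal U;\mathcal H^q)$ with $\mathcal H^q$ the locally constant presheaf of group $H^q(F)$, converging to $H^\ast(U(\hat E,\rho))$.

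The main obstacle is this lifting step together with the choice of $\rho$. Unlike in Theorem~\ref{thm:main}, near $E$ the space $X$ is genuinely singular --- extremal subsets are exactly where the singular behavior of the collapse concentrates --- so $M\to X$ is \emph{not} a fibration over any neighborhood of $E$ in $X$, and the fiber over a point of $E$ need not be the generic fiber. One must show that the ``$E$-directions'' isolated by the good-cover functions of $E$ nonetheless carry a well-defined, locally trivial fiber structure with fiber $\simeq F$ despite the uncontrolled collapse in the normal directions, and that one fixed $\rho$ works for all sufficiently close $M$, with the metric neighborhood $U(\hat E,\rho)$ matching the function-defined regions up to the homotopy equivalences used above. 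This is precisely where primitivity of $E$ and the absence of proper extremal subsets of $E$ enter.
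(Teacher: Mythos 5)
Your overall architecture is the right one and matches the paper's: choose $\rho$ using the regularity of $\dist_E$ near $E$, take a good cover of $E$ by superlevel sets of strictly concave functions defined on neighborhoods in $X$, intersect with $U(E,\rho)$, lift to $M$, show the lifted pieces have the homotopy type of the regular fiber $F$ over $E$ with compatible inclusions, and then run the spectral sequence and a homotopy lifting argument. But the step you yourself flag as ``the main obstacle'' --- that each lifted piece $\hat V_A$ deformation retracts onto something homotopy equivalent to $F$, with all choices independent of $M$ --- is precisely the new content of the paper, and your proposal contains no argument for it. Invoking ``Perelman's rescaling-and-gluing technique'' does not suffice: Perelman's Theorem \ref{thm:ser} and the product-neighborhood condition (PN) are only available when $X$ itself has no proper extremal subsets, and near $E$ there is no fibration or product structure to lift. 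The paper fills this gap in Section \ref{sec:pte} by introducing the condition (DRN) of Definition \ref{dfn:drn} (deformation retract neighborhoods onto small lifted balls $\bar B(\hat q_i,r_i)$, which serve as the ``fibers'' over points of $E$), proving the extremal analogue of Perelman's theorem (Theorem \ref{thm:sere}) via Lemmas \ref{lem:sere1}--\ref{lem:sere2}, whose heart is the delicate Claim \ref{clm:sere} constructing translations by gluing canonical neighborhoods with Siebenmann's theorems, and then proving Lemma \ref{lem:rfe} (all regular fibers over $E$ are homotopy equivalent, using connectedness of $\mathring E$). None of this is supplied or replaced by an alternative in your proposal.

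Two further points are glossed over. First, the claim that $\{\hat V_\alpha\}$ covers $U(\hat E,\rho)$ with the same nerve as the restricted cover of $E$, and that the gradient flows crush $\hat V_A$ toward (a neighborhood of) $\hat p_A$ inside $\hat V_A$, rests on the quantitative estimate $h_A'(\uparrow_x^E)>c_A>0$ for the good-cover functions (Lemma \ref{lem:sc} in the paper), which forces $\dist_E$, and after lifting $\dist_{\hat E}$ away from a small neighborhood, to decrease along the flow; ``one checks as in Theorem \ref{thm:main}'' does not cover this, since in Theorem \ref{thm:main} there is no extremal subset to respect. Second, your conclusion $\pi_i(U(\hat E,\rho),F)\cong\pi_i(|N|)\cong\pi_i(E)$ does not follow from merely matching nerves and pieces: an isomorphism of relative homotopy groups of the pair (neighborhood, fiber) with homotopy groups of the base is a Serre-fibration-type statement and requires the homotopy lifting property over $E$ (Proposition \ref{prop:lifte} and Corollary \ref{cor:lifte} in the paper), together with the regularity of $\dist_E$ (Lemma \ref{lem:dist}) and the lift/projection correspondence as in the proof of Theorem \ref{thm:main}(1). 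So while your outline is faithful to the strategy, the proposal leaves the essential technical core of the theorem unproved.
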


\begin{rem}\label{rem:yam}
As a generalization of Yamaguchi's fibration theorem, there is actually a locally trivial fibration over a ``regular part'' of $E$ if the singularities of $M$ are not so bad (see \cite[4.6]{F:reg}).
\end{rem}

\begin{rem}\label{rem:pet2}
A related (but unpublished) result of Petrunin can be found in \cite[4.6]{A}.
\end{rem}

\begin{org}
The organization of this paper is as follows.
In Section \ref{sec:note}, we introduce some notation and conventions.
In Section \ref{sec:pre}, we recall some notions and results on Alexandrov spaces which will be used later.
Section \ref{sec:prf} contains the proof of the main theorem \ref{thm:main}.
We first prove Theorem \ref{thm:main}(2) by considering a lift of a good covering of the limit space $X$ to the collapsing space $M$.
Next we show the homotopy lifting property with respect to the good covering and prove Theorem \ref{thm:main}(1).
Section \ref{sec:gen} includes generalizations of these results to each primitive extremal subset $E$ of $X$.
In Section \ref{sec:pte}, we extend the notion of a good point of an Alexandrov space defined by Perelman \cite{Per:col} to each primitive extremal subset and generalize the main result of \cite{Per:col}.
In Section \ref{sec:cove}, we study the relation between a lift of a good covering of $X$ and extremal subsets of $X$ to prove Theorem \ref{thm:ext}.
\end{org}

\begin{ack}
The author would like to thank Prof.\ Takao Yamaguchi for his useful advice and constant encouragement.
He is also grateful to the referee for carefully reading the manuscript.
\end{ack}

\section{Notation and conventions}\label{sec:note}

Throughout this paper, an upper bound $n$ for dimension and a lower bound $\kappa$ for curvature are fixed and omitted unless otherwise stated.

As in Section \ref{sec:intro}, $M$ and $X$ often denote a collapsing space and a limit space, respectively.
More precisely, $X$ denotes a fixed $k$-dimensional compact Alexandrov space, where $k<n$, and $M$ denotes a variable $n$-dimensional Alexandrov space that is $\mu$-close to $X$ for sufficiently small $\mu>0$ depending only on $X$.
We always fix $\mu$-approximations $\theta:M\to X$ and $\eta:X\to M$ such that $|\theta\circ\eta,\id_X|<\mu$.
Moreover, we use the hat symbol $\hat\ $ to indicate natural lifts of objects on $X$ to $M$ with respect to these approximations.
For example $\hat p\in M$ denotes $\eta(p)$ for $p\in X$ and $\hat A\subset M$ denotes $\overline{\eta(A)}$ for $A\subset X$ closed.
Similarly, if $f:X\to\mathbb R$ is constructed out of distance functions, then $\hat f:M\to\mathbb R$ denotes a function constructed by the same formula for the lifting points or sets.
Furthermore, if $K\subset X$ is defined by some inequalities of such functions, then $\hat K\subset M$ denotes a subset defined by the same inequalities for the lifting functions.

$\pi_i$ denotes the $i$-th homotopy group and $H^\ast$ denotes the singular cohomology group (with arbitrary coefficients).
$I^k(v,r)$ denotes the closed $r$-neighborhood of $v\in\mathbb R^k$ with respect to the maximum norm.
Unless otherwise stated, $I$ denotes the unit interval and $I^i$ denotes the $i$-dimensional unit cube.

In a metric space, $B(p,r)$ denotes the open $r$-ball around a point $p$ and $U(A,r)$ denotes the open $r$-neighborhood of a subset $A$.
The notation $A\Subset B$ for subsets $A$ and $B$ means that the closure of $A$ is contained in the interior of $B$.
The symbols $\mathring\ $ and $\bar\ $ (or $\Cl$) indicate the interior and closure, respectively.

\section{Preliminaries}\label{sec:pre}

In this section, we recall some notions and results on Alexandrov spaces which will be used later.
Section \ref{sec:alex} includes some standard facts and notation.
Section \ref{sec:adm} concerns regular admissible maps defined by Perelman \cite{Per:mor} and contains the fibration theorem for them.
We also recall the notion of a canonical neighborhood.
Section \ref{sec:ext} discusses extremal subsets defined by Perelman-Petrunin \cite{PP:ext} and the stratification of Alexandrov spaces.
Section \ref{sec:good} deals with the good coverings of Alexandrov spaces introduced by Mitsuishi-Yamaguchi \cite{MY:good}.
Semiconcave functions and their gradient flows are also discussed.
Section \ref{sec:ser} is a brief summary of the results of Perelman \cite{Per:col} mentioned in Section \ref{sec:intro}.

\subsection{Alexandrov spaces}\label{sec:alex}

Here we recall some basic facts on Alexandrov spaces.
See \cite{BGP} or \cite{BBI} for more details.

Let $M$ be an $n$-dimensional Alexandrov space with curvature $\ge\kappa$.
For a geodesic triangle $\triangle pqr$ in $M$, we denote by $\tilde\triangle pqr$ a comparison triangle with the same sidelengths on the $\kappa$-plane.
Then, by definition, the natural correspondence $\triangle pqr\to\tilde\triangle pqr$ is nonexpanding.
Let $\angle qpr$ denote the angle of $\triangle pqr$ at $p$ and $\tilde\angle qpr$ the corresponding angle of $\tilde\triangle pqr$.
Then we have $\angle qpr\ge\tilde\angle qpr$.

Let $\Sigma_p$ denote the space of directions at $p$ and $T_p$ the tangent cone at $p$.
Then $\Sigma_p$ is an $(n-1)$-dimensional Alexandrov space of curvature $\ge1$ and $T_p$ is an $n$-dimensional Alexandrov space of curvature $\ge 0$.
For $p,q\in M$, we denote by $\uparrow_p^q\in\Sigma_p$ one of the directions of shortest paths from $p$ to $q$, and by $\Uparrow_p^q\subset\Sigma_p$ the set of all directions of shortest paths from $p$ to $q$.
For $u,v\in T_p$, we define their scalar product $\langle u,v\rangle:=|u||v|\cos\angle(u,v)$, where the absolute value $|\cdot|$ denotes the distance from the vertex of $T_p$.
Let $\dist_q$ denote the distance function $|q\cdot|$ from $q$.
Then the first variation formula states
\[d_p\dist_q=-\langle\Uparrow_p^q,\cdot\rangle\]
on $T_p$.
Similarly, for a closed subset $A\subset M$, we use the notation $\uparrow_p^A$, $\Uparrow_p^A$, and $\dist_A$.
Then we have $d_p\dist_A=-\langle\Uparrow_p^A,\cdot\rangle$.

The boundary $\partial M$ of $M$ is defined inductively in such a way that $p\in\partial M\Leftrightarrow\partial\Sigma_p\neq\emptyset$.
The double $\tilde M$ of $M$, that is, the space consisting of two copies of $M$ glued together along their common boundaries, is again an Alexandrov space with the same lower curvature bound and without boundary (\cite[\S5]{Per:alex}).

The class of $n$-dimensional Alexandrov spaces with curvature $\ge\kappa$ and diameter $\le D$ is precompact with respect to the Gromov-Hausdorff topology.
Furthermore, the limit of a sequence of such Alexandrov spaces is an Alexandrov space with curvature $\ge\kappa$ and dimension $\le n$.

\subsection{Regular admissible maps}\label{sec:adm}

Here we define regular admissible maps and recall the fibration theorem and the canonical neighborhood theorem for them, proved by Perelman \cite{Per:mor}.
We also refer to \cite{Per:col} and \cite{K:stab}.

A typical example of a regular admissible map is a distance map $f=(|a_1\cdot|,\dots,|a_k\cdot|)$ around $p\in M$ such that $\tilde\angle a_ipa_j>\pi/2$ and $\tilde\angle a_ipw>\pi/2$ for all $i\neq j$ and some $w\in M$.
The actual definition is more general in order to prove the fibration theorem.
Here we present the definition in \cite{Per:col} (cf.\ \cite{K:stab}) rather than the original one in \cite{Per:mor} (the former is liftable with respect to the Gromov-Hausdorff convergence).
As before, $M$ denotes an $n$-dimensional Alexandrov space.

\begin{dfn}\label{dfn:adm}
A map $\bar f:M\to\mathbb R^k$ is said to be \textit{admissible} in a domain $U\subset M$ if it can be represented as $\bar f=H\circ f$ in $U$, where $H$ is a bi-Lipschitz homeomorphism between open subsets of $\mathbb R^k$, and the coordinate functions $f_i$ of $f$ have the form
\[f_i=\sum_{\alpha=1}^{N_i}\varphi_{i\alpha}(|q_{i\alpha}\cdot|),\]
where $q_{i\alpha}\in M$ and $\varphi_{i\alpha}$ are smooth, increasing, concave functions.

An admissible map $\bar f=H\circ f$ on $U$ is said to be \textit{regular} at $p\in U$ if it satisfies that
\begin{enumerate}
\item $\sum_{\substack{1\le\alpha\le N_i\\ 1\le\beta\le N_j}}\varphi_{i\alpha}'(|q_{i\alpha}p|)\varphi_{j\beta}'(|q_{j\beta}p|)\cos\tilde\angle q_{i\alpha}pq_{j\beta}<0$
 for all $i\neq j$;
\item there exists $\xi\in\Sigma_p$ such that $f_i'(\xi)>0$ for all $i$.
\end{enumerate}
\end{dfn}

If an admissible map $\bar f:U\to\mathbb R^k$ is regular at $p\in U$, then $k\le n$ and it is $c$-open near $p$ for some $c>0$ in the sense that $\bar f(B(q,r))\supset B(\bar f(q),cr)$ for any $q\in U$ near $p$ and sufficiently small $r>0$.
In particular, if $k=n$, then $\bar f$ is a local bi-Lipschitz homeomorphism near $p$.
Furthermore, the following fibration theorem holds.

\begin{thm}[\cite{Per:mor}]\label{thm:adm}
A proper regular admissible map on a domain $U\subset M$ is a locally trivial fibration.
\end{thm}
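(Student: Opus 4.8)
The plan is to reproduce Perelman's Morse-theoretic argument for admissible maps. Since $H$ is a bi-Lipschitz homeomorphism of open subsets of $\mathbb R^k$, we may assume $\bar f=f$, and as local triviality is a statement local over the target, by properness it suffices to work near one fiber: fix $v\in f(U)$ (so $f^{-1}(v)$ is compact), fix $p\in f^{-1}(v)$, and produce a neighborhood $U'\ni p$, a small $\varepsilon>0$, and a fiber-preserving homeomorphism $\Phi\colon(f^{-1}(v)\cap U')\times I^k(v,\varepsilon)\to f^{-1}(I^k(v,\varepsilon))\cap U''$ for a suitable smaller neighborhood $U''\ni p$. A standard patching over the compact fiber $f^{-1}(v)$---covering it by finitely many such local trivializations, shrinking $\varepsilon$, and gluing the (fiber-preserving) transition homeomorphisms, with a tube-lemma argument from properness ensuring the pieces cover $f^{-1}(I^k(v,\varepsilon))$---then yields local triviality of $f$ over a neighborhood of $v$ in $f(U)$. (One could instead induct on $k$, assuming $f'=(f_1,\dots,f_{k-1})$ is already a locally trivial fibration and trivializing $f_k$ along its fibers; I would keep that in reserve.)

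The core is to build \emph{admissible gradient-like vector fields} adapted to $f$. From the first variation formula $d_x\dist_q=-\langle\Uparrow_x^q,\cdot\rangle$ one gets $d_xf_i=-\sum_\alpha\varphi_{i\alpha}'(|q_{i\alpha}x|)\,\langle\Uparrow_x^{q_{i\alpha}},\cdot\rangle$. Condition~(2) of Definition~\ref{dfn:adm}, together with lower semicontinuity of the relevant angles, provides near $p$ a continuous admissible vector field $W$ (a convex combination of directions $\uparrow_\cdot^{q_{i\alpha}}$) with $d_xf_i(W(x))\ge c_0>0$ for all $i$; condition~(1)---which, via the same formula and the inequality $\angle\ge\tilde\angle$, amounts to the gradients of $f_i$ and $f_j$ subtending more than a right angle for $i\ne j$---lets one tilt $W$ to obtain, for each $i$, an admissible vector field $W_i$ with $d_xf_i(W_i(x))\ge c_0$ and $d_xf_j(W_i(x))\le-c_0$ for $j\ne i$, uniformly on a neighborhood of $p$. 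Then the matrix $\bigl(d_xf_i(W_j(x))\bigr)_{i,j}$ is strictly diagonally dominant with positive diagonal, so combining $W_1,\dots,W_k$ with suitable sign-varying coefficients (and, where needed, vector fields achieving the opposite signs) one obtains, for every $w\in\mathbb R^k$ of small norm, an admissible vector field $W_w$ depending continuously on $w$ with $d_xf(W_w(x))=w$. Flowing along $W_w$ (readjusting it along the way if this equality degrades) for unit time then carries a point $x$ with $f(x)=v$ to one with $f$-value $v+w$.

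With this in hand I would define $\Phi(x,v+w)$ to be the time-$1$ value of the flow of $W_w$ issuing from $x\in f^{-1}(v)\cap U'$, for $w$ in a small cube. By compactness of the fiber these flows exist for time $\approx1$ on a neighborhood, depend continuously on $(x,w)$, and satisfy $f\circ\Phi(x,v+w)=v+w$; hence $\Phi$ is continuous and fiber-preserving, and $\Phi(\cdot,v)=\id$. Surjectivity onto a set of the form $f^{-1}(I^k(v,\varepsilon))\cap U''$ follows from the quantitative openness ($c$-openness) of $f$ near $p$ recorded just before the theorem. The one genuinely delicate point is \emph{injectivity}: distinct $w$'s are already separated since $f\circ\Phi(x,v+w)=v+w$, while for fixed $w$ one must rule out the flow ``folding'' two points of the fiber together. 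This is exactly where one invokes the gradient-flow estimates---the flow of an admissible vector field is distance non-expanding up to a controlled factor and is locally injective on the region where $f$ is non-critical, which by the regularity conditions is precisely where $|W_w|$ is bounded below. Granting this, $\Phi$ is a homeomorphism onto its image, and the patching of the first paragraph completes the proof.

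I expect the main obstacle to be the analytic groundwork for ``admissible vector fields'' and their flows in the non-smooth, merely semiconcave Alexandrov setting: Perelman constructs these flows essentially by hand, as reparametrized families of geodesic-like (quasigeodesic) curves assembled from the directions $\uparrow_\cdot^{q_{i\alpha}}$, and must verify their existence for a definite time, continuity in all parameters, and local injectivity, all with constants that degenerate as one approaches critical values---so every estimate has to be made quantitative in the regularity constants. A cleaner route, which I would actually prefer, replaces these hand-made flows by the gradient flows of the semiconcave functions $\min_if_i$ (and their variants) in the sense of Petrunin and Lytchak, whose contraction properties make the continuity and local-injectivity steps transparent; this is essentially the alternative strategy alluded to in Remark~\ref{rem:pet1}.
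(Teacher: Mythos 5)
The approach you outline is not the one behind the cited theorem, and as written it has gaps that are fatal in the Alexandrov setting. The paper does not reprove Theorem \ref{thm:adm}; it records that Perelman's proof proceeds by \emph{reverse} induction on $k$ (starting from $k=n$, where a regular admissible map is a local homeomorphism), with the canonical neighborhood theorem (Theorem \ref{thm:nbhd}, built from an incomplementable extra function $\bar f_{k+1}$) supplying the induction step and Siebenmann's deformation theory of homeomorphisms on stratified sets doing the gluing. Your plan instead rests on ``admissible gradient-like vector fields'' $W_w$ with $d_xf(W_w(x))=w$ that can be integrated to flows. In an Alexandrov space this first step already fails: the directions $\uparrow_x^{q_{i\alpha}}$ are not continuous in $x$ (shortest paths are not unique), there is no exponential map or ODE theory that integrates an arbitrary continuous field of tangent directions, and the only flows available are gradient flows of semiconcave functions. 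Those flows, however, run only forward in time and are in general \emph{not} injective --- trajectories merge (indeed the strong Lipschitz contractions of Theorem \ref{thm:good} crush whole domains to a point) --- so the step you flag as ``genuinely delicate,'' injectivity of $\Phi$ on each fiber, is not merely delicate but unobtainable from the contraction properties you invoke. This is exactly why Petrunin's announced gradient-flow proof (Remark \ref{rem:pet1}) has never appeared in detail; it cannot be waved in as the ``cleaner route.''

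A second gap is the ``standard patching over the compact fiber.'' You are gluing fiber-preserving homeomorphisms defined near different points of one fiber, not restricting a bundle over smaller base sets, and a tube-lemma argument does not produce a single trivialization from overlapping local ones: the transition homeomorphisms must be deformed to match on overlaps. In Perelman's argument this is precisely where Siebenmann's results (e.g.\ \cite[6.9, 6.10]{Si}, used again in Claim \ref{clm:sere} of this paper) enter, together with the conical structure of the canonical neighborhoods $K(p,a)$ from Theorem \ref{thm:nbhd}. So the two ingredients your sketch treats as routine --- constructing/integrating the fields and gluing the local product structures --- are exactly the two places where the actual proof needs its special machinery, and neither is supplied by the tools available in this paper.
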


The proof of the above theorem is carried out by reverse induction on $k$, using the results of Siebenmann \cite{Si}.
The key ingredient in the proof of the induction step is the next canonical neighborhood theorem.

Let $\bar f:M\to\mathbb R^k$ be an admissible map regular at $p\in M$.
We say that $\bar f$ is \textit{complementable} at $p$ if there exists a function $\bar f_{k+1}:M\to\mathbb R$ such that $(\bar f,\bar f_{k+1}):M\to\mathbb R^k$ is admissible and regular near $p$.
Otherwise the following holds.

\begin{thm}[\cite{Per:mor}]\label{thm:nbhd}
Let $\bar f:M\to\mathbb R^k$ be an admissible map regular and incomplementable at $p\in M$.
Then there exist a neighborhood $U$ of $p$ and a function $\bar f_{k+1}:U\to\mathbb R$ such that $(\bar f,\bar f_{k+1}):U\to\mathbb R^{k+1}$ is admissible and the following are satisfied.
\begin{enumerate}
\item $\bar f_{k+1}\le 0$ on $U$ and $\bar f_{k+1}(p)=0$;
\item for sufficiently small $a>0$, the set
\[K(p,a):=\left\{x\in U\mid |\bar f(x)-\bar f(p)|\le a,\ \bar f_{k+1}(x)\ge-a\right\}\]
is compact, where we use the maximum norm of $\mathbb R^k$;
\item $\bar f$ is regular on $K(p,a)$, and $(\bar f,\bar f_{k+1})$ is regular on $K(p,a)\setminus\bar f_{k+1}^{-1}(0)$;
\item $\bar f$ restricted to $K(p,a)\cap\bar f_{k+1}^{-1}(0)$ is a bijection onto $I^k(\bar f(p),a)$.
\end{enumerate}
Note that if $k=n$, then one can put $\bar f_{n+1}:\equiv 0$.
\end{thm}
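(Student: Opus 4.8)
The plan is to follow Perelman's strategy from \cite{Per:mor}: build the complementing function $\bar f_{k+1}$ explicitly out of distance functions to points chosen ``beyond'' $p$, and then deduce properties (1)--(4) from this construction together with the fibration theorem (Theorem~\ref{thm:adm}) for admissible maps into $\mathbb R^{k+1}$. There is no circularity here, since Theorem~\ref{thm:adm} is proved by reverse induction on the target dimension with Theorem~\ref{thm:nbhd} as the induction step, so the fibration theorem is available in all dimensions $>k$. Normalize $\bar f(p)=0$, write $\bar f=H\circ f$, and work in a small neighborhood of $p$.

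The heart of the argument --- and the step I expect to be the main obstacle --- is to convert incomplementability into a concrete configuration of points. Since $\bar f$ is regular at $p$, the set $W_p=\{\xi\in\Sigma_p:d_pf_i(\xi)>0\text{ for all }i\}$ of $f$-regular directions is nonempty and open, and, being cut out by the concave positively homogeneous functions $d_pf_i$, it is convex in the appropriate sense. Incomplementability should force $W_p$ to be ``thin in every fiber codirection'': if there were $\xi_0\in\Sigma_p$ whose whole $(\pi/2+\varepsilon)$-neighborhood (intersected with the $f$-increasing cone) lay in $W_p$, then $\dist_w$ for a point $w$ placed in the direction $\xi_0$ close to $p$ would be admissible and would complement $\bar f$ on an entire neighborhood of $p$ --- here one also uses that such a $w$ can be kept on the far side of all the $q_{i\alpha}$, so the cross-term condition is met --- contradicting the hypothesis. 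By a compactness argument on $\Sigma_p$ I would upgrade this to the existence of finitely many points $w_1,\dots,w_m$ near $p$ such that (a) the directions $\uparrow_p^{w_j}$ positively span the directions in $\Sigma_p$ complementary to $W_p$, so that no direction in $\overline{W_p}$ makes angle $>\pi/2$ with all of them, while they balance out along a complementary $k$-dimensional set of directions (the prospective section directions); and (b) each $\dist_{w_j}$ is $\bar f$-compatible, i.e.\ $\tilde\angle q_{i\alpha}\,x\,w_j>\pi/2$ for $x$ near $p$ and all $i,\alpha$, so that the cross-term condition of Definition~\ref{dfn:adm}(1) holds for $(\bar f,\bar f_{k+1})$.

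Given such $w_j$, I would put $\bar f_{k+1}:=\sum_{j=1}^m\lambda_j\,\psi(|w_j\cdot|)-C$ for a fixed smooth increasing concave $\psi$, positive weights $\lambda_j$ chosen (balancing against $\psi'(|w_jp|)$, and using that $\Sigma_p$ has curvature $\ge1$) so that $p$ is a local maximum of $\bar f_{k+1}$ whose local maximum set is a topological $k$-disk $D\ni p$ on which $\bar f$ restricts to a homeomorphism, and $C$ the normalizing constant; then $(\bar f,\bar f_{k+1})$ is admissible with $H$ replaced by $H\times\id$. Property (1) follows because $p$ is a local maximum of $\bar f_{k+1}$: $d_p\bar f_{k+1}\le0$ on $\Sigma_p$ together with semiconcavity of $\bar f_{k+1}$ along shortest paths issuing from $p$ gives $\bar f_{k+1}\le0$ near $p$, with equality at $p$ by normalization. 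Property (2) is then immediate --- for small $a$ the inequalities $|\bar f|\le a$ and $\bar f_{k+1}\ge-a$ confine $x$ to a closed subset with closure in $U$, so $K(p,a)$ is compact. For property (3): $\bar f$ is regular at $p$ and regularity of admissible maps is an open condition, so $\bar f$ is regular on $K(p,a)$ after shrinking $a$; and on $K(p,a)\setminus\bar f_{k+1}^{-1}(0)$ the cross-term condition for $(\bar f,\bar f_{k+1})$ holds by (b), while a common increasing direction exists because the only obstruction to one is the local-maximum phenomenon, which by construction occurs exactly on the maximum set $\bar f_{k+1}^{-1}(0)\cap K(p,a)\subset D$.

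Finally, property (4). Applying Theorem~\ref{thm:adm} in dimension $k+1$ to the proper regular admissible map $(\bar f,\bar f_{k+1})\colon K(p,a)\setminus\bar f_{k+1}^{-1}(0)\to I^k(0,a)\times[-a,0)$ (after a further shrink of $a$), each fiber $\bar f^{-1}(v)\cap K(p,a)$ acquires the structure of a cone on the $(n-k-1)$-dimensional fiber over $(v,-a)$, with apex set $\bar f^{-1}(v)\cap\bar f_{k+1}^{-1}(0)$. Since the maximum set $\bar f_{k+1}^{-1}(0)\cap K(p,a)$ lies in $D$ and $\bar f|_D$ is a homeomorphism, $\bar f$ is injective there; surjectivity onto $I^k(0,a)$ follows from the $c$-openness of the regular map $\bar f$ together with a cutoff to the cube (using that $\bar f(D)$ is a full neighborhood of $0$). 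The only genuinely delicate point that remains is to check that the maximum set is all of $D\cap K(p,a)$ and not a proper piece of it, which comes from the strict inequality $\bar f_{k+1}<0$ off $D$ furnished by (a). The case $k=n$ is trivial: $\bar f$ is then a local bi-Lipschitz homeomorphism, so $\bar f_{n+1}\equiv0$ works and $\bar f_{n+1}^{-1}(0)=K(p,a)$.
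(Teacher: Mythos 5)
First, a point of comparison: this theorem is not proved in the paper at all --- it is imported verbatim from Perelman's Morse theory paper \cite{Per:mor} (with details available in \cite{K:stab}), so there is no in-paper argument to measure your proposal against; what follows assesses your sketch on its own terms.

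Your overall strategy (complement $\bar f$ by a combination of distance functions to points placed in suitable directions, then invoke the fibration theorem in target dimension $k+1$, which is legitimate by the reverse induction) is indeed the right one, but the central step is assumed rather than proved. The passage from incomplementability to your configuration (a)--(b) --- finitely many points $w_j$ and weights $\lambda_j$ such that $\bar f_{k+1}=\sum_j\lambda_j\psi(|w_j\cdot|)-C$ has its local maximum set equal to a topological $k$-disk $D$ containing $p$, on which $\bar f$ restricts to a homeomorphism and on which $\bar f_{k+1}$ is \emph{constantly} equal to its maximum value --- is precisely the content of Perelman's Main Lemma, and your sketch offers no mechanism for it. Note what property (4) really demands: since $\bar f_{k+1}\le0$ on $U$, the zero set consists of global maximum points, so the maximum of $\bar f_{k+1}$ along \emph{every} fiber $\bar f^{-1}(v)\cap K(p,a)$, $v\in I^k(\bar f(p),a)$, must equal exactly $0$ and be attained at exactly one point. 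Nothing in the heuristic ``$W_p$ is thin in every fiber codirection, so choose $w_j$ positively spanning the complement and balance the weights'' produces this fiberwise constancy and uniqueness; asserting that the $\lambda_j$ ``can be chosen so that the maximum set is a $k$-disk mapped homeomorphically by $\bar f$'' begs the question, since the existence of such a section is essentially equivalent to (4). In Perelman's argument this is where incomplementability does its real work, via a careful quantitative choice of the points and of $\psi$ together with a contradiction argument; in your write-up incomplementability is only used to motivate the choice of directions.

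Two further steps are also not justified as stated. For (3), you claim that on $K(p,a)\setminus\bar f_{k+1}^{-1}(0)$ a common increasing direction for $(\bar f,\bar f_{k+1})$ exists because ``the only obstruction is the local-maximum phenomenon'': this is false in general --- condition (2) of Definition \ref{dfn:adm} is the joint positivity of $k+1$ directional derivatives at a point of $\Sigma_x$, and it can fail at points that are nowhere near a maximum of $\bar f_{k+1}$; establishing it requires uniform angle/derivative estimates tied to the specific construction, not an openness remark. And for the surjectivity half of (4), you invoke ``$\bar f(D)$ is a full neighborhood of $0$,'' which again presupposes the unproved structure of $D$. So the proposal reproduces the outer shell of Perelman's proof but leaves its core --- the construction and analysis of the complementing function, including the fiberwise max-value and uniqueness statements --- as an unproved assertion.
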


\begin{dfn}\label{dfn:nbhd}
The set $K(p,a)$ satisfying the conclusions (1)--(4) of Theorem \ref{thm:nbhd} is called a (closed) \textit{canonical neighborhood} of $p$ with respect to the admissible map $(\bar f,\bar f_{k+1})$ (note that we do not require that $\bar f$ is incomplementable at $p$).
Furthermore, we say that $K(p,a)$ \textit{respects} an admissible map $\bar g:M\to\mathbb R^l$ if the first $l$-coordinates of $\bar f$ coincide with $\bar g$.
\end{dfn}

In particular, given a regular admissible map, there exists a canonical neighborhood of the regular point respecting the given map.

\begin{rem}\label{rem:nbhd}
The fibration theorem \ref{thm:adm} yields a homeomorphism
\[\Phi:K(p,a)\to I^k(\bar f(p),a)\times(\bar f^{-1}(\bar f(p))\cap K(p,a))\]
respecting $\bar f$ and $\bar f_{k+1}$, that is, $\bar f=\pr_1\circ\Phi$ and $\bar f_{k+1}=\bar f_{k+1}\circ\pr_2\circ\Phi$, where $\pr_i$ denotes the projection onto the $i$-th factor.
Furthermore the fiber $\bar f^{-1}(\bar f(p))\cap K(p,a)$ has a conical structure with vertex $p$.
\end{rem}

\subsection{Extremal subsets}\label{sec:ext}

Here we discuss extremal subsets defined by Perelman-Petrunin \cite{PP:ext} and the stratification of Alexandrov spaces.
We also refer to \cite[\S4]{Pet:semi}, \cite[\S9]{K:stab}, and \cite{F:reg}.

Let $M$ be an Alexandrov space and consider a distance function $\dist_q=|q\cdot|$ from $q\in M$.
Recall that the first variation formula states $d_p\dist_q=-\langle\Uparrow_p^q,\cdot\rangle$ on $T_p$ for any $p\in M\setminus\{q\}$.
A \textit{critical point} of $\dist_q$ is a point $p$ such that $d_p\dist_q$ is nonpositive on $T_p$.

\begin{dfn}\label{dfn:ext}
A closed subset $E$ of $M$ is said to be \textit{extremal} if it satisfies the following condition (E).
\begin{itemize}
\item[(E)] If $p\in E$ is a local minimum point of $\dist_q|_E$, where $q\notin E$, then it is a critical point of $\dist_q$.
\end{itemize}
Note that the empty set $\emptyset$ and $M$ itself are regarded as extremal subsets of $M$.
\end{dfn}

\begin{rem}\label{rem:ext}
Extremal subsets have an alternative definition in terms of gradient flows of semiconcave functions: a subset of an Alexandrov space is extremal if and only if it is invariant under any gradient flow.
See the next section for the definition of gradient flows.
\end{rem}

\begin{ex}\label{ex:ext}
\begin{enumerate}
\item A one-point subset $\{p\}$ of $M$ is extremal if and only if the diameter of $\Sigma_p$ is not greater than $\pi/2$.
\item Fix a topological cone $K$ with vertex $o$.
Then the closure of the set of points in $M$ whose tangent cones are pointed homeomorphic to $(K,o)$ is an extremal subset.
In particular the boundary of $M$ is an extremal subset.
This follows from the fibration theorem \ref{thm:adm} and the stability theorem (\cite{Per:alex}, cf.\ \cite{K:stab}).
See Lemma \ref{lem:s} and Example \ref{ex:s} for the generalization.
\item Let a compact group $G$ act on $M$ by isometries.
Then the quotient $M/G$ is again an Alexandrov space with the same lower curvature bound.
Let $H$ be a closed subgroup of $G$.
Then the natural projection of the fixed point set of $H$ is an extremal subset of $M/G$.
\end{enumerate}
\end{ex}

The Hausdorff dimension coincides with the topological dimension for extremal subsets.
However, the local dimension may not be constant in general (this is not the case for primitive extremal subsets defined below).
Tangent cones and spaces of directions are also defined for extremal subsets.

The results of the previous section can be generalized to extremal subsets.
For example, a regular admissible map to $\mathbb R^k$ restricted to an extremal subset $E$ is open, and moreover it is a local homeomorphism if $k$ is the local dimension of $E$.
Furthermore, the fibration theorem \ref{thm:adm} and the canonical neighborhood theorem \ref{thm:nbhd} also hold for extremal subsets.
These results will be stated more precisely when necessary, especially in Section \ref{sec:pte}.

The union, the intersection, and the closure of the difference of two extremal subsets are also extremal.
Moreover, the collection of extremal subsets in an Alexandrov space is locally finite in a certain sense.
These two facts lead to the following definition.

\begin{dfn}\label{dfn:prim}
An extremal subset is said to be \textit{primitive} if it contains no proper extremal subsets with nonempty relative interior.
The \textit{main part} $\mathring E$ of a primitive extremal subset $E$ is the relative complement of all proper extremal subsets in $E$.
\end{dfn}

Any extremal subset can be uniquely represented as a union of primitive extremal subsets with nonempty relative interior, and the main part of a primitive extremal subset is open and dense in it.
Clearly the main parts of all primitive extremal subsets in an Alexandrov space form a disjoint covering of that space.
Furthermore, the fibration theorem for extremal subsets mentioned above shows the following.

\begin{thm}[\cite{PP:ext}]\label{thm:prim}
The main parts of primitive extremal subsets are topological manifolds.
In particular, they define a stratification of an Alexandrov space into topological manifolds.
\end{thm}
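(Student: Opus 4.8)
The plan is to reduce everything to the local statement that each point $p$ of the main part $\mathring E$ of a primitive extremal subset $E$ has a neighbourhood in $E$ homeomorphic to $\mathbb R^{l}$, where $l$ denotes the local dimension of $E$ at $p$ (constant on $E$ by primitivity). Call $p\in E$ an \emph{$E$-regular point} if there is an admissible map $f\colon U\to\mathbb R^{l}$ on a neighbourhood $U$ of $p$ that is regular at $p$. As recalled above, the restriction $f|_{E}$ is then open near $p$ and, since $l$ is the local dimension of $E$, a local homeomorphism onto an open subset of $\mathbb R^{l}$; so $E$ is locally Euclidean of dimension $l$ at $p$, and the set $\mathrm{Reg}\,E$ of $E$-regular points is open in $E$. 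Thus it suffices to prove the inclusion $\mathring E\subset\mathrm{Reg}\,E$.

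Next I would check that $\mathrm{Reg}\,E$ is nonempty — indeed dense in $E$. This is the point at which one actually produces the admissible map, by an induction on $l$ modelled on Perelman's construction for the ambient space in \cite{Per:mor,Per:col}: at a point $p$ of ``maximal infinitesimal regularity'' one builds the coordinate functions of $f$ one at a time out of distance functions $\dist_{q}$, using the first variation formula $d_{p}\dist_{q}=-\langle\Uparrow_{p}^{q},\cdot\rangle$ to verify regularity along $E$, and invoking the canonical neighbourhood theorem \ref{thm:nbhd} for extremal subsets to handle the steps where no further regular coordinate can be adjoined. The inductive hypothesis is applied to the space of directions $\Sigma_{p}E$, which is an extremal subset of the $(n-1)$-dimensional space $\Sigma_{p}$ and has dimension $l-1$.

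The main step — and, I expect, the principal obstacle — is to show that the $E$-singular set $\mathrm{Sing}\,E:=E\setminus\mathrm{Reg}\,E$ is an extremal subset of $M$. It is closed, so one must verify condition (E) of Definition \ref{dfn:ext}: if $q\notin\mathrm{Sing}\,E$ and $p\in\mathrm{Sing}\,E$ is a local minimum of $\dist_{q}|_{\mathrm{Sing}\,E}$, then $p$ is a critical point of $\dist_{q}$. Assuming instead that $d_{p}\dist_{q}$ is positive in some direction, one would combine this with the assumed minimality and with an infinitesimal description of $E$-singularity at $p$ — the failure to adjoin the last coordinate, encoded as an extremality-type condition on $\Sigma_{p}E$ inside $\Sigma_{p}$ — to produce points of $E$ arbitrarily close to $p$, strictly closer to $q$, and still $E$-singular, contradicting minimality; the propagation would be carried out by a gradient-flow (quasigeodesic) argument on $E$ together with the induction on dimension through $\Sigma_{p}E$. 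Pinning down the precise equivalence between ``$E$-singular at $p$'' and an extremality condition on $\Sigma_{p}E$, and making the propagation work, will be the technical heart of the argument.

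Granting this, the theorem follows. Since $\mathrm{Reg}\,E$ is dense, $\mathrm{Sing}\,E$ is a proper (possibly empty) extremal subset of $E$, hence is among the proper extremal subsets deleted in forming $\mathring E$; therefore $\mathring E\subset E\setminus\mathrm{Sing}\,E=\mathrm{Reg}\,E$, so $\mathring E$ is a topological $l$-manifold. Finally, the main parts of all primitive extremal subsets form a disjoint covering of $M$, each is a manifold by the above, and $\overline{\mathring E}\setminus\mathring E=E\setminus\mathring E$ is the union of all proper extremal subsets of $E$; by the unique decomposition of extremal subsets into primitives and the local finiteness of extremal subsets, this is a locally finite union of main parts of primitive extremal subsets of lower dimension. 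This verifies the frontier condition and exhibits the main parts as the strata of a stratification of $M$ into topological manifolds.
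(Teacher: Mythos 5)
You should first note that the paper does not actually prove this statement: Theorem \ref{thm:prim} is quoted from Perelman--Petrunin \cite{PP:ext}, with only the remark that it follows from the Morse theory (fibration and canonical neighborhood theorems) for extremal subsets. So your proposal has to stand on its own as a reconstruction of the \cite{PP:ext} argument. Its outer layers are consistent with the facts the paper quotes: if some admissible map $f\colon X\to\mathbb R^{m}$, $m=\dim E$, is regular at $p\in E$, then $f|_{E}$ is a local homeomorphism near $p$, so the set $\mathrm{Reg}\,E$ is open and locally Euclidean; it is dense in $E$; and, granting that the closure of $\mathrm{Sing}\,E$ is a proper extremal subset, one gets $\mathring E\subset\mathrm{Reg}\,E$ and then the stratification statement from the density of main parts and local finiteness of extremal subsets. (A minor caveat: the constancy of the local dimension of a primitive extremal subset, which you invoke at the outset ``by primitivity,'' is itself part of the \cite{PP:ext} structure theory rather than a formal consequence of the definition, although the paper does record it.)

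The genuine gap is exactly the step you defer: showing that $\Cl(\mathrm{Sing}\,E)$ is extremal, equivalently that every point of the main part admits a rank-$m$ regular admissible map. This is the entire content of the theorem, and your sketch does not establish it. Verifying condition (E) of Definition \ref{dfn:ext} directly, as you propose, requires two ingredients you do not supply: a characterization of ``$E$-singular at $p$'' in terms of $\Sigma_{p}E$ (no such infinitesimal criterion is available; regularity of admissible maps at $p$ is a condition on comparison angles at nearby points, not on $\Sigma_{p}E$ alone), and a mechanism that propagates singularity to points strictly closer to $q$; as written, the contradiction with minimality never materializes. The way this is actually done --- both for good points in \cite{Per:col} and for the structure theory in \cite{PP:ext} --- bypasses condition (E): one shows that maximal-rank regularity is constant along the sheets $f_{l+1}^{-1}(0)\cap K$ of canonical neighborhoods from Theorem \ref{thm:nbhd} (a translation/product-structure argument of the kind carried out in Section \ref{sec:pte}), i.e.\ that the singular set satisfies property ($\ast$), and then applies Lemma \ref{lem:s}, whose proof is the reverse induction of Claim \ref{clm:s}, to conclude that its closure is extremal. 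Until you supply this propagation step (or an equivalent substitute), the proof is incomplete; the density argument of your second paragraph is likewise only a gesture toward \cite{Per:mor}, but that fact is at least explicitly available in the literature the paper cites.
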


\subsection{Good coverings}\label{sec:good}

Here we recall the notion of a good covering of an Alexandrov space introduced by Mitsuishi-Yamaguchi \cite{MY:good}.
We also refer to \cite{Pet:semi} for semiconcave functions and their gradient flows.

Let us first define semiconcave functions.
Note that the definition of Alexandrov spaces says that their distance functions are more concave than those of the space of constant curvature.

Let $M$ be an Alexandrov space.
First suppose $M$ has no boundary.
A (locally Lipschitz) function $f:U\to\mathbb R$ on an open subset $U\subset M$ is said to be \textit{$\lambda$-concave} if for any shortest path $\gamma(t)$ in $U$ parametrized by arclength, the function $f\circ\gamma(t)-(\lambda/2)t^2$ is concave in the usual sense.
In case $M$ has boundary, we say that $f$ is \textit{$\lambda$-concave} if its natural extension to the double of $M$ is $\lambda$-concave in the above sense.
A function $f:U\to\mathbb R$ is said to be \textit{semiconcave} if for any $x\in U$ there exists $\lambda_x\in\mathbb R$ such that $f$ is $\lambda_x$-concave on a neighborhood of $x$.
We also say that $f$ is \textit{strictly concave} if it is $\lambda$-concave for some $\lambda<0$.

The key step in the proof of the canonical neighborhood theorem \ref{thm:nbhd} is to construct a strictly concave admissible function out of distance functions from points in the regular direction of the given admissible map.
By taking a minimum of such functions constructed in sufficiently many directions around a point, one can obtain the following (see \cite[\S4]{K:reg} or the proof of Lemma \ref{lem:sc} for details).

\begin{thm}\label{thm:sc}
For any $p\in M$, there exists a strictly concave function $h$ defined on a neighborhood of $p$ and attaining its unique maximum at $p$.
\end{thm}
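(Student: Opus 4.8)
The plan is to construct $h$ as the minimum of finitely many strictly concave functions, one for (roughly) each direction in $\Sigma_p$, each produced by the device underlying the canonical neighborhood theorem \ref{thm:nbhd}.

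Fix a small $\delta>0$ and a $\delta$-net $\xi_1,\dots,\xi_N$ of $\Sigma_p$. Since $\{\uparrow_p^q:q\in M\}$ is dense in $\Sigma_p$, for each $i$ I choose points $q_{i1},\dots,q_{im}$ at a fixed small distance $r_0$ from $p$ whose directions $\uparrow_p^{q_{i\alpha}}$ lie $\delta$-close to $\xi_i$. Following the key step in the proof of Theorem \ref{thm:nbhd} (see \cite[\S4]{K:reg}), I then compose a sum $\sum_\alpha\varphi_{i\alpha}(\dist_{q_{i\alpha}})$ with a suitable smooth increasing concave profile $\psi_i$ whose second derivative is sufficiently negative, obtaining a function $g_i:=\psi_i\bigl(\sum_\alpha\varphi_{i\alpha}(\dist_{q_{i\alpha}})\bigr)$ that is $\lambda_i$-concave for some $\lambda_i<0$ on a common neighborhood $U$ of $p$; I normalize so that $g_i(p)=0$. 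Set $h:=\min_i g_i$ on $U$. Since a minimum of $\lambda_i$-concave functions is $(\max_i\lambda_i)$-concave, $h$ is strictly concave on $U$.

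It remains to check that $h$ attains its maximum on a small ball around $p$ only at $p$. By the first variation formula, $d_p\dist_{q_{i\alpha}}(v)\le-\langle\xi_i,v\rangle+O(\delta)$ for $v\in\Sigma_p$, so $d_pg_i(v)<0$ once $\angle(\xi_i,v)\le\delta$ and $\delta$ is small; consequently, since the $g_i$ agree at $p$, $d_ph(v)=\min_i d_pg_i(v)<0$ for every $v\in\Sigma_p$. Combining this with $\lambda$-concavity of $h$, along any minimal geodesic $\gamma$ issuing from $p$ into $U$ one gets $h(\gamma(t))\le h(p)+t\,d_ph(\gamma'(0))+\tfrac{\lambda}{2}t^2<h(p)$ for all small $t>0$. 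Since every point of a sufficiently small ball around $p$ is joined to $p$ by such a geodesic, $p$ is the unique maximum point of $h$ there.

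The main obstacle is not this bookkeeping but the construction of the directional building blocks $g_i$: that a composition of the stated form can be made $\lambda_i$-concave with $\lambda_i<0$ on a fixed neighborhood of $p$ is the hard analytic core of Perelman's Morse theory. It rests on the comparison estimate $(\dist_q\circ\gamma)''\le\bigl(1-((\dist_q\circ\gamma)')^2\bigr)/(\dist_q\circ\gamma)$ (and its analog for $\kappa<0$): the positive term that survives along geodesics perpendicular to the $\uparrow_p^{q_{i\alpha}}$ must be dominated either by the $\psi_i''(F_i)(F_i')^2$ contribution or by the negative terms $((\dist_{q_{i\alpha}}\circ\gamma)')^2(\varphi_{i\alpha}''-\varphi_{i\alpha}'/\dist_{q_{i\alpha}})$ coming from points $q_{i\alpha}$ transverse to $\gamma$, which forces a careful choice of the profiles $\varphi_{i\alpha},\psi_i$, the net density $\delta$, the radius $r_0$, and the number of points per direction; the case where $\{p\}$ itself is extremal (i.e.\ $\diam\Sigma_p\le\pi/2$) requires additional care and is likewise treated in \cite[\S4]{K:reg}. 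I would carry out these details following that reference, or alternatively deduce the statement from Lemma \ref{lem:sc}.
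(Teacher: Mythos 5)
The weak point is the very first construction step: your claim that, with all reference points $q_{i1},\dots,q_{im}$ chosen so that the directions $\uparrow_p^{q_{i\alpha}}$ lie $\delta$-close to a \emph{single} direction $\xi_i$, the block $g_i=\psi_i\bigl(\sum_\alpha\varphi_{i\alpha}(\dist_{q_{i\alpha}})\bigr)$ can be made $\lambda_i$-concave with $\lambda_i<0$ on a fixed neighborhood of $p$. This is false for such clustered configurations, for every choice of the profiles. Nothing in your prescription prevents all the directions $\uparrow_p^{q_{i\alpha}}$ from being exactly orthogonal to some fixed $v\in\Sigma_p$ with $\angle(v,\xi_i)=\pi/2$ (the $\delta$-cap around $\xi_i$ meets the great subsphere orthogonal to $v$). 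Along a geodesic issuing from $p$ in the direction $v$, every $(\dist_{q_{i\alpha}}\circ\gamma)'$ vanishes at $t=0$, so both sources of negativity you invoke disappear simultaneously: the term $\psi_i''(F_i)(F_i')^2$ vanishes because $F_i'=0$, and there are no points ``transverse to $\gamma$'' inside your cluster, so the terms $((\dist_{q_{i\alpha}}\circ\gamma)')^2\varphi_{i\alpha}''$ vanish as well; meanwhile the comparison inequality only bounds $(\dist_{q_{i\alpha}}\circ\gamma)''$ above by roughly $1/r_0>0$, and this value is attained in the Euclidean model, where $g_i$ is \emph{strictly convex} along the line through $p$ in the direction $v$. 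So the difficulty is not bookkeeping to be filled in from \cite[\S 4]{K:reg}: that reference's argument is attached to a different configuration and cannot be run for yours. (Also, ``deduce the statement from Lemma \ref{lem:sc}'' is circular, since that lemma is stated for the function furnished by Theorem \ref{thm:sc}; it is only its \emph{proof} that recalls the construction.)

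What the paper actually does (proof of Lemma \ref{lem:sc}, following \cite[\S4]{K:reg}) is to spread the points \emph{within} each block rather than cluster them: one takes a $\pi R/16$-net $\{q_\alpha\}$ of $\partial B(p,R)$ and, for each $\alpha$, a \emph{maximal $\varepsilon R$-discrete} set $\{q_{\alpha\beta}\}\subset\partial B(p,R)\cap B(q_\alpha,\pi R/16)$, and sets $h_\alpha=\frac1{N_\alpha}\sum_\beta\varphi_{R,r}\circ\dist_{q_{\alpha\beta}}$ with $\varphi_{R,r}(t)=(t-R)-(t-R)^2/4r$ and $r$ small compared with $R$ and $\varepsilon$. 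The discreteness together with a Bishop--Gromov count guarantees that for \emph{every} direction $\xi$ all but a small minority of the $q_{\alpha\beta}$ satisfy $|\dist_{q_{\alpha\beta}}'(\xi)|\ge c\varepsilon$, so the averaged negative term $\varphi''\cdot(\dist')^2\approx-\varepsilon^2/2r$ beats the positive comparison term of order $1/R$; no outer profile $\psi_i$ is needed, and this quantitative spread-plus-majority argument is exactly the ingredient your blocks lack. The remaining parts of your proposal are fine and agree with the paper: a minimum of $\lambda_i$-concave functions agreeing at $p$ is strictly concave, and uniqueness of the maximum at $p$ follows from the first variation formula (the block directions being dense in $\Sigma_p$ gives $d_ph<0$, or at least $\le 0$, on $\Sigma_p$) combined with $\lambda$-concavity along shortest paths from $p$. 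If you replace your clustered blocks by the spread-out blocks above, the rest of your write-up goes through and coincides with the paper's argument.
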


Using such functions, Mitsuishi-Yamaguchi \cite{MY:good} defined a good covering of an Alexandrov space and proved the following theorem.
Here we define it more directly for our application.

\begin{dfn}\label{dfn:good}
A locally finite open covering $\mathcal U=\{U_\alpha\}_\alpha$ of $M$ is said to be \textit{good} if each $U_\alpha$ is a strict superlevel set of a strictly concave function $h_\alpha$ on $B(p_\alpha,r_\alpha)$ constructed in Theorem \ref{thm:sc} and satisfies $U_\alpha\subset B(p_\alpha,r_\alpha/2)$.
\end{dfn}

\begin{thm}[\cite{MY:good}]\label{thm:good}
Let $\mathcal U=\{U_\alpha\}_\alpha$ be a good covering of $M$.
Then every nonempty intersection of $U_\alpha$'s is a convex, conical, strongly Lipschitz contractible bounded domain.
\end{thm}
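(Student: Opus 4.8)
The plan is to prove the four asserted properties of a nonempty intersection $W:=\bigcap_{\alpha\in A}U_\alpha$ (the index set $A$ is automatically finite, by local finiteness of $\mathcal U$) by exhibiting $W$ as a strict superlevel set of a \emph{single} strictly concave function and then analyzing its gradient flow. After adding constants to the $h_\alpha$ we may assume $U_\alpha=\{x\in B(p_\alpha,r_\alpha)\mid h_\alpha(x)>0\}$. For \textbf{convexity}, first show that each $U_\alpha$ is totally convex, i.e.\ every shortest path of $M$ joining two points $x,y\in U_\alpha$ lies in $U_\alpha$: the inclusion $U_\alpha\subset B(p_\alpha,r_\alpha/2)$ forces every point of such a shortest path to be within distance $|xy|/2<r_\alpha/2$ of $x$ or of $y$, hence to lie in $B(p_\alpha,r_\alpha)$, the domain on which $h_\alpha$ is $\lambda_\alpha$-concave; restricting $h_\alpha$ to the path and using $\lambda_\alpha<0$ gives $h_\alpha>\min\{h_\alpha(x),h_\alpha(y)\}>0$ along it, so the path stays in $U_\alpha$. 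An intersection of totally convex sets is totally convex, so $W$ is totally convex — in particular open, bounded and connected, i.e.\ a bounded domain, with $\overline W$ compact.

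For the \textbf{conical structure}, set $h:=\min_{\alpha\in A}h_\alpha$. The same distance estimate shows all the $h_\alpha$, hence $h$, are defined on an open neighborhood $\Omega$ of $\overline W$; there $h$ is $\lambda$-concave with $\lambda:=\max_{\alpha\in A}\lambda_\alpha<0$ (a minimum of $\lambda$-concave functions is $\lambda$-concave), and, shrinking $\Omega$ if necessary, $W=\{x\in\Omega\mid h(x)>0\}$. Thus $W$ is itself a strict superlevel set of a strictly concave function, i.e.\ conical in the sense of Definition \ref{dfn:good}. Strict concavity together with total convexity of $\overline W$ implies that $h$ attains its maximum at a \emph{unique} point $o$, which is moreover the only critical point of $h$ in $\overline W$; since $W\neq\emptyset$ we get $h(o)>0$, so $h_\alpha(o)>0$ for all $\alpha$ and $o\in W$.

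For \textbf{strong Lipschitz contractibility}, let $\Phi^t$ be the gradient flow of $h$ (Section \ref{sec:good}). Since $h$ is nondecreasing along gradient curves, the flow keeps each $x$ in $\{h\ge h(x)\}$, which for $x\in W$ is a compact subset of $\Omega$; hence $\Phi^t$ is defined for all $t\ge 0$ on $\overline W$ and maps $W$ into $W$. The standard comparison estimate for gradient curves of a $\lambda$-concave function, $\tfrac{d}{dt}|\Phi^t x\,\Phi^t y|\le\lambda\,|\Phi^t x\,\Phi^t y|$, yields $|\Phi^t x\,\Phi^t y|\le e^{\lambda t}|xy|$, and since $|\nabla h|$ is bounded on the compact set $\overline W$, the map $(x,t)\mapsto\Phi^t(x)$ is Lipschitz on $\overline W\times[0,T]$ for every $T$. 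Because $o$ is a critical point of $h$ it is fixed by $\Phi^t$, so $|\Phi^t x\,o|\le e^{\lambda t}|xo|\to 0$ uniformly in $x\in\overline W$; in fact the gradient curves sweep out $\overline W$ as a cone with vertex $o$. Reparametrizing, e.g.\ $H(x,s):=\Phi^{\tan(\pi s/2)}(x)$ for $s\in[0,1)$ and $H(x,1):=o$, gives a contraction $H\colon W\times[0,1]\to W$; the exponential decay of $e^{\lambda\tan(\pi s/2)}$ as $s\to 1^-$ dominates $1-s$, so $H$ is Lipschitz on $\overline W\times[0,1]$, which is the strong Lipschitz contractibility.

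The main obstacle is not any of these arguments in isolation but the bookkeeping with the two radii $r_\alpha/2$ and $r_\alpha$. One must ensure that $h=\min_\alpha h_\alpha$ is genuinely $\lambda$-concave on a neighborhood of \emph{all} of $\overline W$ — not merely near its interior points — and that the identification $W=\{h>0\}$ and the inclusion $o\in W$ really hold, i.e.\ that $\overline W$ stays inside the open balls $B(p_\alpha,r_\alpha/2)$ rather than touching the spheres $\partial B(p_\alpha,r_\alpha/2)$. This is exactly the point at which Definition \ref{dfn:good} and the unique-maximum property of each $h_\alpha$ from Theorem \ref{thm:sc} must be invoked carefully (confining the superlevel sets well inside their balls), and it is the only place where the geometry of good coverings, rather than soft properties of strictly concave functions, is used; everything else then follows routinely.
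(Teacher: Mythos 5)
Your treatment of convexity (via $U_\alpha\subset B(p_\alpha,r_\alpha/2)$ and concavity of $h_\alpha$ along geodesics), of boundedness, and of the contraction by the gradient flow of $h_A=\min_{\alpha\in A}h_\alpha$ is in the spirit of what the paper indicates (the paper itself does not prove this theorem but cites \cite{MY:good}, sketching only that the conical structure comes from the fibration theorem and the contraction from gradient flows). However, there is a genuine gap in your handling of the \emph{conical} property. You write that $W$ is ``a strict superlevel set of a strictly concave function, i.e.\ conical in the sense of Definition \ref{dfn:good}'', but Definition \ref{dfn:good} defines good coverings, not conicality; in this paper (and in \cite{MY:good}) \emph{conical} means that $W$ is pointed homeomorphic to the tangent cone at some point of $W$. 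Being a superlevel set of a strictly concave function is the hypothesis, not the conclusion: to get the pointed homeomorphism onto $T_o$ one must apply the fibration theorem for regular semiconcave functions (the generalization of Theorem \ref{thm:adm} cited in Section \ref{sec:good}) to $h_A$, which is regular on $W\setminus\{o\}$, together with Perelman-type structure results near the maximum point identifying small superlevel sets with the cone over the level set and the level set with $\Sigma_o$. This is exactly the nontrivial input the paper points to, and your proposal simply omits it by (implicitly) redefining the term.

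A secondary, fixable issue: your justification that the reparametrized contraction $H(x,s)=\Phi^{\tan(\pi s/2)}(x)$ is Lipschitz up to $s=1$ is not correct as stated. The exponential decay $|\Phi^t x\, o|\le e^{\lambda t}|xo|$ bounds the \emph{position}, not the $s$-speed $|\nabla h_A(\Phi^{T(s)}x)|\,T'(s)$, and $T'(s)$ blows up like $(1-s)^{-2}$; note also that $|\nabla h_A|$ need not tend to $0$ at the maximum point for a nonsmooth strictly concave function. One must instead bound the length of the tail of the gradient curve, e.g.\ using $\int_T^\infty|\nabla h_A|^2\,dt=h_A(o)-h_A(\Phi^T x)$ together with the lower bound $|\nabla h_A|\ge\sqrt{2|\lambda|\,(h_A(o)-h_A(\cdot))}$ coming from strict concavity, which yields exponential decay of the tail length and then Lipschitz continuity of a suitable reparametrization. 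You do get the required monotone decrease of the distance to $o$ from the contraction estimate and the fact that $o$ is a fixed point, so that part is fine; but as written the Lipschitz claim at $s\to1^-$ is asserted with the wrong quantity.
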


Here an open subset $U$ is called \textit{conical} and \textit{strongly Lipschitz contractible} if there exists $p\in U$ such that $U$ is pointed homeomorphic to the tangent cone at $p$ and admits a deformation retraction to $p$ that is Lipschitz and monotonically decreases the distance to $p$ (see the original paper for the precise definitions).

\begin{rem}\label{rem:good}
\begin{enumerate}
\item The condition $U_\alpha\subset B(p_\alpha,r_\alpha/2)$ in the above definition is only needed for the convexity.
\item The original definition of a good covering in \cite{MY:good} is a locally finite open covering satisfying the conclusion of the above theorem.
\end{enumerate}
\end{rem}

For the proof, we need the notion of the gradient of semiconcave functions.
Let $f:U\to\mathbb R$ be a semiconcave function.
Note that the directional derivative $d_pf:T_p\to\mathbb R$ exists at any $p\in U$.
The \textit{gradient} $\nabla_pf\in T_p$ of $f$ at $p$ is characterized by the properties
\begin{enumerate}
\item $d_pf(v)\le\langle\nabla_pf,v\rangle$ for any $v\in T_p$;
\item $d_pf(\nabla_pf)=|\nabla_pf|^2$
\end{enumerate}
(see Section \ref{sec:alex} for the definitions of the scalar product and the absolute value).
More specifically, if $\max d_pf|_{\Sigma_p}>0$ then $\nabla_pf=d_pf(\xi)\xi$, where $\xi\in\Sigma_p$ is the unique maximum point of $d_pf|_{\Sigma_p}$; otherwise $\nabla_pf=o$.

The conical structure in Theorem \ref{thm:good} is provided by the fibration theorem.
A semiconcave function $f:U\to\mathbb R$ is said to be \textit{regular} at $p\in U$ if $|\nabla_p f|>0$ (cf.\ Definition \ref{dfn:adm}).
The fibration theorem \ref{thm:adm} is generalized to regular semiconcave functions (see \cite[\S2]{Per:dc}, \cite[\S8]{Pet:semi}, \cite[1.5]{MY:good}).
Since a strictly concave function is regular except at the maximum point, this theorem yields the conical structure of a superlevel set.

On the other hand, the strong Lipschitz contraction is given by the gradient flows of semiconcave functions.
A curve $\alpha(t)$ is called a \textit{gradient curve} of a semiconcave function $f$ if its right tangent vector $\alpha^+(t)$ uniquely exists and equals $\nabla_{\alpha(t)}f$ for any $t$.
The gradient curve starting at a point is unique if it exists.
Moreover, for any standard semiconcave function such as a distance function, it actually exists for all $t\ge0$. 
The \textit{gradient flow} $\Phi_t$ ($t\ge0$) of $f$ is defined by $\Phi_t(p)=\alpha_p(t)$, where $\alpha_p$ is the gradient curve of $f$ starting at $p$.
The strong Lipschitz contraction of Theorem \ref{thm:good} is constructed by gluing some gradient flows.

As mentioned in Remark \ref{rem:ext}, extremal subsets are invariant under any gradient flow.
Therefore the strong Lipschitz contractions of a good covering preserve all extremal subsets.

It is also worth mentioning that the strict concavity of the function $h$ of Theorem \ref{thm:sc} is liftable with respect to the noncollapsing convergence of Alexandrov spaces.
Therefore the goodness of a covering is liftable in this case, which yields the finiteness of Lipschitz homotopy types of noncollapsing Alexandrov spaces (\cite{MY:lip}).
In this paper, we study the relation between good coverings and collapsing.

\subsection{Serre fibration theorem}\label{sec:ser}

Here we summarize the results of Perelman \cite{Per:col} mentioned in Section \ref{sec:intro}.

We use the same notation as in Section \ref{sec:intro}: $X$ denotes a fixed $k$-dimensional compact Alexandrov space, where $k<n$, and $M$ denotes a variable $n$-dimensional Alexandrov space that is $\mu$-close to $X$.
We fix $\mu$-approximations $\theta:M\to X$ and $\eta:X\to M$.
Furthermore $\hat p\in M$ denotes a lift of $p\in X$.
Similarly, if $f:X\to\mathbb R^l$ is constructed out of distance functions like admissible maps, then $\hat f:M\to\mathbb R^l$ denotes its natural lift (see Section \ref{sec:note}).

Let us first define regular fibers.
Let $f:X\to\mathbb R^k$ be an admissible map regular at $p\in X$, where $k=\dim X$.
For some $a>0$, $f$ is regular on $f^{-1}(I^k(f(p),a))$ (in a small neighborhood of $p$) and the restriction $f:f^{-1}(I^k(f(p),a))\to I^k(f(p),a)$ is a homeomorphism.
We call such a pair $(f,p)$ \textit{fiber data} on $X$ and the set $f^{-1}(I^k(f(p),a))$ its \textit{coordinate neighborhood}.
Now suppose $\mu\ll a$ is small enough and lift the situation to $M$.
Since the lift $\hat f$ is also regular on $\hat f^{-1}(I^k(f(p),a))$, the fibration theorem \ref{thm:adm} implies that $\hat f:\hat f^{-1}(I^k(f(p),a))\to I^k(f(p),a)$ is a trivial bundle.
We call the fiber $\hat f^{-1}(f(p))$ a \textit{regular fiber} in $M$.
Note that fiber data is entirely defined on fixed $X$, whereas its regular fiber is defined in variable $M$.

Note that the set of points of fiber data is open, dense, and convex in $X$.
The convexity follows from Petrunin's result \cite{Pet:para} on parallel transport, which shows that the space of directions is invariant on the interior of a shortest path.
Indeed, a point $p$ is a part of fiber data if and only if $\Sigma_p$ contains $k+1$ directions making obtuse angles with each other (this follows from \cite[2.2]{Per:mor}).
Hence the above result together with the lower semicontinuity of the space of directions yields the convexity.

Furthermore, all regular fibers are homotopy equivalent in the following sense: given two fiber data on $X$, if $\mu$ is small enough, then the corresponding regular fibers in $M$ are homotopy equivalent.
We will later review the proof of this fact for our application (see Lemma \ref{lem:rf}).

Using regular fibers, Perelman defined the notion of a good point.

\begin{dfn}\label{dfn:pn}
A point $p\in X$ is said to be \textit{good} if it satisfies the following condition (PN).
\begin{itemize}
\item[(PN)]
For any $R>0$, there exists $\rho=\rho(p,R)$ such that for any fiber data $(f,q)$ with $q\in B(p,\rho)$, if $\mu$ is small enough, then $M$ contains a \textit{product neighborhood}, that is, a domain $U$ with $B(\hat p,\rho)\subset U\subset B(\hat p,R)$ such that the inclusion $\hat f^{-1}(f(q))\hookrightarrow U$ is a homotopy equivalence.
\end{itemize}
\end{dfn}

\begin{rem}\label{rem:pn}
To be precise, the original definition in \cite{Per:col} only requires that the inclusion $\hat f^{-1}(f(q))\hookrightarrow U$ induces isomorphisms of homotopy and homology groups.
However, one can easily check that it can be replaced with a homotopy equivalence (actually a deformation retract as we will see in Section \ref{sec:pte}).
\end{rem}

Note that if $p\in X$ is a regular point of an admissible map $f:X\to\mathbb R^k$, where $k=\dim X$, then it is a good point.
Indeed one can take $\hat f^{-1}(\mathring I^k(f(p),a))$ as a product neighborhood, where $\rho\ll a\ll R$.
In particular the set of good points is dense in $X$.

Perelman proved the following theorem.

\begin{thm}[\cite{Per:col}]\label{thm:ser}
The closure of the set of bad points in $X$, if nonempty, is a proper extremal subset.
In particular, if $X$ has no proper extremal subsets, then every point is good.
\end{thm}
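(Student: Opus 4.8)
The plan is to verify the extremality condition (E) of Definition~\ref{dfn:ext} for the closed set $E:=\Cl(\{\text{bad points in }X\})$. Properness of $E$ is immediate: every point of fiber data is good (as noted after Definition~\ref{dfn:pn}), and the set of fiber-data points is open and dense in $X$, so the bad set---and hence $E$---is contained in the closed nowhere-dense set $X\setminus\{\text{fiber data points}\}$; thus $E\neq X$. Granting that $E$ is extremal, this yields the ``in particular'': if $X$ has no proper extremal subset then $E$ must be empty, so every point of $X$ is good. It is also convenient to note that the set of good points is open---this follows directly from (PN) by shrinking $\rho$ and enlarging $R$ when passing to a nearby point---so that $E$ is in fact equal to the set of bad points.

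Now fix $q\notin E$ and a point $p\in E$ that is a local minimum of $\dist_q|_E$; put $c:=\dist_q(p)$ and choose $\delta>0$ so small that $B(p,\delta)\cap\{\dist_q<c\}$ contains no bad point. Assume toward a contradiction that $p$ is not a critical point of $\dist_q$; equivalently $\nabla_p\dist_q\neq o$, so $\dist_q$ is a regular semiconcave function on some neighbourhood $W\subset B(p,\delta)$ of $p$. By the fibration theorem for regular semiconcave functions (Section~\ref{sec:good}), after shrinking $W$ we may identify it with a product $(c-\epsilon,c+\epsilon)\times L$ under which $\dist_q$ becomes the first projection and the gradient flow $\Phi_t$ of $\dist_q$ moves the first coordinate monotonically upward while fixing the second. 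If $p$ corresponds to $(c,\ell_0)$, then the point $p^{-}$ corresponding to $(c-\epsilon/2,\ell_0)$ lies in $W\cap\{\dist_q<c\}\subset B(p,\delta)\cap\{\dist_q<c\}$, hence is good, and $p=\Phi_T(p^{-})$ for some $T>0$ along a trajectory contained in $W$, i.e.\ through regular points of $\dist_q$. The key lemma below---that goodness is preserved by the gradient flow of a distance function through regular points---then forces $p$ to be good, contradicting $p\in E$. Hence $p$ is critical for $\dist_q$, so (E) holds and the theorem follows.

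It remains to prove the key lemma: \emph{if $y\in X$ is good and $\dist_q$ is regular at every point of the trajectory $\{\Phi_t(y):0\le t\le T\}$, then $z:=\Phi_T(y)$ is good.} Given $R>0$ and fiber data $(f,q_0)$ with $q_0$ near $z$, we must produce a product neighbourhood of $\hat z$ in $M$. For $\mu$ small, $\dist_{\hat q}$ is regular along the lifted trajectory, so its gradient flow $\hat\Phi_t$ on $M$ is defined on a neighbourhood of that trajectory and is Gromov--Hausdorff close to $\Phi_t$; moreover, in the region modelled on the trivial fibration of $\dist_q$, the restriction of $\hat\Phi_T$ to a suitable ``tube'' adapted to this fibration is a homotopy equivalence onto its image, not merely a continuous map. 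Starting from a product neighbourhood $U$ witnessing the goodness of $y$---chosen to be such a tube, hence homotopy equivalent to a regular fiber by Lemma~\ref{lem:rf}---we find that $\hat\Phi_T(U)$ is a domain containing $\hat z$, trapped between balls of the prescribed radii after adjusting parameters, and homotopy equivalent to $U$ and thus to $\hat f^{-1}(f(q_0))$. This exhibits $\hat\Phi_T(U)$ as a product neighbourhood of $\hat z$, so $z$ is good.

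The crux is precisely the claim just used, that the lifted gradient flow carries a product neighbourhood to a product neighbourhood. The delicate issues are: choosing $U$ compatibly with the fibration structure of $\dist_q$ so that $\hat\Phi_T|_U$ is genuinely a homotopy equivalence (gradient flows are not homeomorphisms in general); keeping the nested ball conditions $B(\hat z,\rho)\subset\hat\Phi_T(U)\subset B(\hat z,R)$ under control while iterating along the trajectory and letting $\mu\to0$; and arranging the constants uniformly so that the resulting $\rho=\rho(z,R)$ is independent of $M$. The remaining ingredients---stability of regularity and of gradient flows under collapse, and the bookkeeping with the fibration theorem and Lemma~\ref{lem:rf}---are routine.
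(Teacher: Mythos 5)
There is a genuine gap, and it sits exactly where you place all the weight. Your strategy is to verify condition (E) directly by producing a good point $p^-$ below $p$ and transporting its goodness to $p$ along the gradient flow of $\dist_q$; this is the gradient-flow route that Petrunin announced but whose details were never published (Remark \ref{rem:pet1}), not the route of \cite{Per:col} that this paper follows. Two steps of your argument are unjustified. First, the fibration theorem for the regular semiconcave function $\dist_q$ gives a homeomorphism under which $\dist_q$ becomes the projection, but the gradient flow of $\dist_q$ has no reason to respect that product structure: it does not fix the fiber coordinate, gradient curves can merge, and backward trajectories need not exist, so there is in general no point $p^-$ in $\{\dist_q<c\}$ with $\Phi_T(p^-)=p$. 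Hence even the reduction to your key lemma is not established. Second, the key lemma itself --- that the lifted flow $\hat\Phi_T$ carries a product neighborhood to a product neighborhood, with $\hat\Phi_T$ restricted to a suitable tube a homotopy equivalence onto its image --- is precisely the hard content of the theorem, and you defer it as ``delicate'' while calling the rest routine; gradient flows are non-injective $1$-Lipschitz-type maps whose images need not even be domains, and Definition \ref{dfn:pn} requires the \emph{inclusion} of the specific fiber $\hat f^{-1}(f(q))$ into the neighborhood to be a homotopy equivalence, which an abstract equivalence of $\hat\Phi_T(U)$ with a fiber does not give. (A smaller point: openness of the set of good points does not ``follow directly'' from (PN), since the admissible $\rho$ depends on $R$ and degenerates as $R\to0$; fortunately you only need that points of $B(p,\delta)\cap\{\dist_q<c\}$ lie outside the closure of the bad set, which is fine.)

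For comparison, the argument recalled and generalized in Section \ref{sec:pte} avoids flows entirely: one shows that the set of good points satisfies the canonical-neighborhood property $(\ast)$ (Example \ref{ex:s}(2), i.e.\ \cite[2.2]{Per:col}; generalized in Lemma \ref{lem:sere2} via translation homeomorphisms respecting canonical neighborhoods, Claim \ref{clm:sere}), that regular points of $k$-dimensional admissible maps are good (giving properness), and then that the closure of any set satisfying $(\ast)$ is extremal via Claim \ref{clm:s}, a reverse induction on the number of coordinate functions using the canonical neighborhood theorem \ref{thm:nbhd}; extremality is extracted from the $l=1$ case of that claim, since noncriticality of $\dist_q$ at $p$ makes an admissible function built from $\dist_q$ regular near $p$. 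That machinery (propagation of goodness along the sets $f_{l+1}^{-1}(0)\cap K$ by controlled homeomorphisms, rather than by flows) is exactly what substitutes for your unproven lemma, so as written your proposal does not constitute a proof.
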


In case $X$ has no proper extremal subsets, this implies the following homotopy lifting property with respect to the approximation $\theta:M\to X$.
All maps below (but $\theta$) are assumed to be continuous.

\begin{cor}\label{cor:ser}
Suppose $X$ has no proper extremal subsets.
Fix an integer $j\ge1$. 
Then, for any $R>0$, there exists $r>0$ such that the following holds provided $\mu$ is small enough:
Let $K$ be a finite simplicial complex of dimension $\le j$.
Suppose $\sigma:K\times I\to X$ and $\hat\sigma:K\times\{0\}\to M$ satisfy $|\sigma,\theta\hat\sigma|<r$ on $K\times\{0\}$.
Then, $\hat\sigma$ can be extended to a map on $K\times I$ so that $|\sigma,\theta\hat\sigma|<R$ on $K\times I$.
\end{cor}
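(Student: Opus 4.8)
The plan is to derive Corollary \ref{cor:ser} from Theorem \ref{thm:ser} by a standard compactness-and-induction argument on the cells of $K$, using the product neighborhoods supplied by the goodness of every point of $X$. Since $X$ has no proper extremal subsets, every $p \in X$ is good; hence for the given $R>0$ we obtain, for each $p \in X$, a radius $\rho(p,R)$ as in condition (PN). By compactness of $X$ we extract finitely many points $p_1,\dots,p_N$ whose balls $B(p_l,\rho_l/2)$ (writing $\rho_l=\rho(p_l,R)$) cover $X$; let $r_0=\tfrac12\min_l\rho_l$ be a Lebesgue-type number, so that any $r$-ball in $X$ with $r\le r_0$ lies in some $B(p_l,\rho_l)$. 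We then want to choose the final $r>0$ (and implicitly $\mu$ small) so small that: (i) $r\le r_0$, and (ii) near any point of $X$ there is fiber data $(f,q)$ with coordinate neighborhood of size controlled by $R$, whose lift $\hat f^{-1}(f(q))$ sits inside the product neighborhood $U$ attached to the relevant $p_l$.

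The homotopy extension itself I would build cell by cell over a triangulation of $K\times I$ fine enough that each cell maps by $\sigma$ into a single ball $B(p_l,\rho_l)$ of the cover. Proceeding by induction on the skeleta of $K\times I$ (starting from $K\times\{0\}$, where $\hat\sigma$ is already given and $r$-close to $\sigma$): given a simplex $\tau$ of $K\times I$ with $\sigma(\tau)\subset B(p_l,\rho_l)$, assume $\hat\sigma$ is already defined on $\partial\tau$ with $|\sigma,\theta\hat\sigma|<R$ there and, in fact, with the image of $\partial\tau$ landing in a product neighborhood $U$ associated to fiber data centered near $p_l$. Because the inclusion $\hat f^{-1}(f(q))\hookrightarrow U$ is a homotopy equivalence — equivalently, because such a $U$ is homotopy equivalent to a fixed regular fiber and in particular $\pi_i(U)\to\pi_i(U)$ poses no obstruction — the attaching map $\partial\tau\to U$ is null-homotopic rel lower skeleton, so $\hat\sigma|_{\partial\tau}$ extends over $\tau$ with image in $U\subset B(\hat p_l, R')$ for a controlled $R'$. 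Compatibility of the extensions on overlapping simplices is handled by always extending into the (contractible-enough, canonically chosen) product neighborhood attached to the unique vertex $p_l$ selected for that simplex, and by choosing the triangulation of $K\times I$ refining the $\sigma$-preimages of the cover $\{B(p_l,\rho_l/2)\}$; the bound $|\sigma,\theta\hat\sigma|<R$ on all of $K\times I$ then follows because every point of the extended $\hat\sigma$ lies in a product neighborhood inside $B(\hat p_l,R)$, whose $\theta$-image is within $R+2\mu$ of $p_l$, and hence within, say, $2R$ of $\sigma$ on that simplex — after replacing $R$ by $R/2$ at the outset this gives the stated inequality.

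The main obstacle, and the point requiring the most care, is the \emph{coherence} of the locally defined extensions: condition (PN) only produces a product neighborhood for one piece of fiber data at a time, so when a simplex of $K\times I$ meets several balls $B(p_l,\rho_l)$ one must argue that the product neighborhoods can be nested or compared. The clean way around this is to fix, once and for all, a single good covering $\mathcal U$ of $X$ in the sense of Definition \ref{dfn:good}, subordinate to $\{B(p_l,\rho_l/2)\}$; on each element of $\mathcal U$ the ambient space $M$ restricts (for $\mu$ small) to something homotopy equivalent to the regular fiber $F$ by Theorem \ref{thm:good} and the lifting discussion of Section \ref{sec:good}, and the intersections are themselves contractible, so the extension can be performed compatibly by the usual nerve-type induction — this is precisely the mechanism that Sections \ref{sec:prf} will formalize. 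Finally, the dimension bound $\dim K\le j$ enters only to guarantee that the induction terminates at the $(j{+}1)$-skeleton of $K\times I$, which is where the dependence of $\mu$ (and $r$) on $j$ in Perelman's original formulation comes from; the improvement removing this dependence is exactly what the good-covering argument of the main theorem achieves, but for the present corollary the stated $j$-dependent version suffices.
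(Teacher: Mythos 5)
The statement you are proving is quoted by the paper from Perelman's work: the text derives it from Theorem \ref{thm:ser} by the skeleton-by-skeleton argument of \cite[2.4]{Per:col}, and the paper's own new machinery (Propositions \ref{prop:shf}, \ref{prop:lift} and Corollary \ref{cor:lift}) later proves the stronger, $j$-independent version. Measured against that, your proposal has the right ingredients (every point is good by Theorem \ref{thm:ser}, compactness, a fine triangulation of $K\times I$, induction over skeleta using product neighborhoods), but the central step is not justified. The claim that ``the attaching map $\partial\tau\to U$ is null-homotopic'' because $U$ is homotopy equivalent to a regular fiber is a non sequitur: a regular fiber can have arbitrary homotopy groups. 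In fact null-homotopy in $U$ is not even what is needed; extending a map from $\Delta\times\{0\}\cup\partial\Delta\times I$ over $\Delta\times I$ is never obstructed. The whole content is to keep $\theta\circ\hat\sigma$ close to $\sigma$, i.e.\ to force the extension, and in particular its top face $\hat\sigma(\Delta\times\{1\})$, into a \emph{prescribed smaller} product neighborhood so that the induction can continue. The obstruction therefore lives in the relative homotopy groups of a nested pair of product neighborhoods, and their vanishing requires that the inclusion of the smaller one into the larger one be a homotopy equivalence (compared through regular fibers as in Lemma \ref{lem:rf}), not merely that each is separately equivalent to some fiber. This is exactly the coherence issue you flag, and your proposal never resolves it.

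Two further problems follow from this. First, the scale bookkeeping that produces the $j$-dependence of $r$ is missing: you fix a single radius $\rho(p_l,R)$ for all simplices, but after extending over the $i$-dimensional simplices the closeness of $\theta\hat\sigma$ to $\sigma$ has degraded from the $\rho$-scale to the $R$-scale, so the boundary data for the $(i{+}1)$-dimensional simplices is no longer fine enough to fit into product neighborhoods at the same scale. One needs a tower of scales $r=R_0\ll R_1\ll\dots\ll R_j\le R$ with $R_i\le\rho(\cdot,R_{i+1})$, used according to the dimension of the simplex; this is where $\dim K\le j$ genuinely enters, not merely ``so that the induction terminates.'' Second, your fallback --- fixing a good covering and arguing that the lifted pieces are homotopy equivalent to the regular fiber --- is essentially the paper's Section \ref{sec:prf}, but as invoked it is both misattributed and circular: the homotopy type of the lifted elements $\hat U_A$ does not follow from Theorem \ref{thm:good} or from the noncollapsing lifting discussion of Section \ref{sec:good} (which fails under collapse); it is Proposition \ref{prop:shf}, whose proof itself rests on the condition (PN) of Definition \ref{dfn:pn} and on the homotopy-equivalence comparison of nested neighborhoods. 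Deferring the key step to ``what Section \ref{sec:prf} will formalize'' means the corollary is not actually proved by the proposal as written; either carry out Perelman's nested-scale induction with the relative-homotopy-group argument, or prove the Section \ref{sec:prf} statements first and deduce this corollary from Corollary \ref{cor:lift}.
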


Theorem \ref{thm:per} follows from this corollary.
We shall remove the restriction $j$ on the dimension in the next section by using good coverings.

\section{Proof of the main theorem}\label{sec:prf}

In this section we prove Theorem \ref{thm:main}.
We first prove Theorem \ref{thm:main}(2) by considering a lift of a good covering of the limit space $X$ to the collapsing space $M$.
We show that all elements of the nerve of the lifted cover have the homotopy type of a regular fiber (Proposition \ref{prop:shf}).
Next, using it, we show the homotopy lifting property with respect to the good covering (Proposition \ref{prop:lift}).
In particular, it is independent of the dimension of simplicial complexes and enables us to prove Theorem \ref{thm:main}(1).

Throughout this section, $X$ denotes a fixed $k$-dimensional compact Alexandrov space, where $k<n$, and $M$ denotes a variable $n$-dimensional Alexandrov space that is $\mu$-close to $X$.
We fix $\mu$-approximations $\theta:M\to X$ and $\eta:X\to M$ such that $|\theta\circ\eta,\id_X|<\mu$.
The hat symbol $\hat\ $ is used to indicate natural lifts of objects on $X$ to $M$ (see Section \ref{sec:note}).
Furthermore, we often assume that $X$ has no proper extremal subsets.
Note that this assumption implies that every point of $X$ is good in the sense of Definition \ref{dfn:pn} (see Theorem \ref{thm:ser}).

Let us first recall the proof of the following fact mentioned in Section \ref{sec:ser}.
Here the absence of proper extremal subsets is not needed.

\begin{lem}[\cite{Per:col}]\label{lem:rf}
Let $(f_1,p_1)$ and $(f_2,p_2)$ be fiber data on $X$.
Then, if $\mu$ is small enough, the corresponding regular fibers $F_1$ and $F_2$ in $M$ are homotopy equivalent.
\end{lem}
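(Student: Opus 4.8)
The plan is to connect the two fiber data by a chain of nearby fiber data along a shortest path in $X$, thereby reducing to the case where the two fiber data share the same base point, and then to compare the two regular fibers by observing that the associated lifted admissible maps on $M$ are two $C^0$-close locally trivial fibrations near $\hat p$. First, recall from Section~\ref{sec:ser} that the set $\Omega$ of points of fiber data is open, dense and convex in $X$, and contains $p_1$ and $p_2$. Join $p_1$ and $p_2$ by a shortest path $\gamma$, which lies in $\Omega$ by convexity; cover the compact set $\gamma$ by finitely many coordinate neighborhoods of fiber data $(g_j,q_j)$ with $q_j\in\gamma$, and choose points $p_1=q_0,q_1,\dots,q_m=p_2$ on $\gamma$ fine enough that $q_j$ and $q_{j+1}$ both lie in the coordinate neighborhood of $(g_j,q_j)$. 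For $\mu$ small, $\hat g_j$ is a trivial bundle over the cube $I^k(g_j(q_j),a_j)$ by Theorem~\ref{thm:adm}, so the regular fibers of $(g_j,q_j)$ and of $(g_j,q_{j+1})$, being two fibers of this bundle, are homeomorphic. Hence it suffices to prove the lemma when $p_1=p_2=:p$, since one may then chain the homotopy equivalences obtained for the same-base-point pairs $(f_1,p_1)$ and $(g_0,q_0)$, then $(g_j,q_{j+1})$ and $(g_{j+1},q_{j+1})$, and finally $(g_{m-1},q_m)$ and $(f_2,p_2)$.

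So let $(f,p)$ and $(f',p)$ be fiber data at the same point, with regular fibers $F_1$ and $F_2$. Since $k=\dim X$, the maps $f$ and $f'$ are local bi-Lipschitz homeomorphisms near $p$ (as noted after Theorem~\ref{thm:adm}), so $\Psi:=f'\circ f^{-1}$ is a bi-Lipschitz homeomorphism between neighborhoods of $f(p)$ and $f'(p)$ in $\mathbb R^k$. Replacing $f$ by $\Psi\circ f$, which is again an admissible map regular at $p$ — only the bi-Lipschitz chart part has been postcomposed with $\Psi$ — and whose regular fiber is still $\widehat{\Psi\circ f}^{-1}(\Psi(f(p)))=\hat f^{-1}(f(p))=F_1$, we may assume that $f$ and $f'$ agree on a neighborhood of $p$ in $X$. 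Then, since the lifts converge to the original functions as $\mu\to0$, we have $|\hat f-\hat f'|<\mu'$ on a fixed neighborhood $\hat N$ of $\hat p$, where $\mu'=\mu'(\mu)\to0$. Both $\hat f$ and $\hat f'$ are locally trivial fibrations near $\hat p$, with fibers $F_1$ and $F_2$ respectively, and each regular fiber has diameter $O(\mu)$: its image under $\theta$ lies in $f^{-1}$ of a small ball, hence in a small ball around $p$, and $\theta$ distorts distances by at most $\mu$.

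Finally, set $v:=\hat f(\hat p)$ and $v':=\hat f'(\hat p)$, so $|v-v'|<\mu'$, choose radii $a_0>a_1>a_2>a_3>0$ with $a_0-a_1\sim\mu'$ and each $a_{i+1}$ much smaller than $a_i$, and put
\[W_0=\hat f'^{-1}(B(v',a_0)),\quad W_1=\hat f^{-1}(B(v,a_1)),\quad W_2=\hat f'^{-1}(B(v',a_2)),\quad W_3=\hat f^{-1}(B(v,a_3)).\]
For $\mu$ small and suitable $a_i$, the bound $|\hat f-\hat f'|<\mu'$, the Lipschitz bounds, and the $O(\mu)$-bound on the diameter of fibers give the nesting $W_0\supset W_1\supset W_2\supset W_3\ni\hat p$. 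Each $W_i$ deformation retracts onto the fiber over the center of its base ball, so $W_0\simeq W_2\simeq F_2$ and $W_1\simeq W_3\simeq F_1$; moreover $W_3\hookrightarrow W_1$ and $W_2\hookrightarrow W_0$ are genuine homotopy equivalences, being inclusions of trivial bundles over concentric balls. A short diagram chase on homotopy groups applied to $W_3\hookrightarrow W_2\hookrightarrow W_1\hookrightarrow W_0$ then forces $W_2\hookrightarrow W_1$ to induce isomorphisms on all $\pi_i$; since Alexandrov spaces and their open subsets are ANRs, this weak equivalence is a homotopy equivalence, and therefore $F_1\simeq W_1\simeq W_2\simeq F_2$.

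The technical heart of the argument is this last step, the comparison of two $C^0$-close locally trivial fibrations near a single point; the preceding reductions are routine applications of the fibration theorem and of the properties of the set of fiber-data points. The one subsidiary point worth isolating is the uniform $O(\mu)$-bound on the diameter of regular fibers, which is precisely what makes the nesting $W_0\supset W_1\supset W_2\supset W_3$ available for $\mu$ small.
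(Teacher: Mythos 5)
Your argument is correct, and its overall skeleton is the same as the paper's: you reduce to the case of a common base point by chaining coordinate neighborhoods along a shortest path, using the convexity of the set of fiber-data points, exactly as in the paper (where the intermediate comparisons are likewise fibers of the lifted trivial bundles over nearby base values). The difference is in how the same-base-point case is settled. The paper fixes trivializations of $\hat f_1$ and $\hat f_2$ over their coordinate cubes and uses the radial deformation retractions $R_i$ onto $F_i$, observing that $R_1|_{F_2}$ and $R_2|_{F_1}$ are mutually inverse homotopy equivalences; this rests on precisely the two facts you isolate (local triviality of the lifts from Theorem \ref{thm:adm} and the $O(\mu)$ bound on fiber diameters, which keep the retraction homotopies inside both coordinate neighborhoods). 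You instead normalize by the chart $\Psi=f'\circ f^{-1}$ — legitimate, since postcomposition changes only the bi-Lipschitz chart in Definition \ref{dfn:adm} and the lift becomes $\Psi\circ\hat f$, so the regular fiber is unchanged — to make the two lifted maps $C^0$-close, and then run an interleaving argument on nested preimages, finishing with Whitehead's theorem and the ANR property of open subsets of finite-dimensional Alexandrov spaces. Both routes work; yours is somewhat heavier, since the explicit retractions already furnish a homotopy inverse and render Whitehead/ANR unnecessary, and it needs the (routine, but worth stating) convention that each $W_i$ is the preimage intersected with the corresponding trivializing neighborhood of $\hat p$ — the global preimages $\hat f^{-1}(B(v,a_i))$ in $M$ need not be nested nor deformation retract onto a single fiber. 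What your version buys is a cleaner separation of the two ingredients: the $\Psi$-normalization reduces everything to comparing two $C^0$-close locally trivial fibrations at one point, and the interleaving makes the use of the smallness of regular fibers completely explicit.
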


\begin{proof}
First consider the special case $p_1=p_2=p$.
Recall that the fiber data $(f_i,p)$ ($i=1,2$) define their coordinate neighborhoods $f_i^{-1}(I^k(f_i(p),a_i))$ for some $a_i>0$ and that their lifts $\hat f_i:\hat f_i^{-1}(I^k(f_i(p),a_i))\to I^k(f_i(p),a_i)$ are trivial bundles if $\mu\ll a_i$ is small enough (see Section \ref{sec:ser}).
Fix trivializations $\hat f_i^{-1}(I^k(f_i(p),a_i))\cong F_i\times I^k(f_i(p),a_i)$ with respect to $\hat f_i$ and define (deformation) retractions $R_i:\hat f_i^{-1}(I^k(f_i(p),a))\to F_i$ by using the radial (deformation) retractions of $I^k(f_i(p),a)$ to $\{f_i(p)\}$.
Then, it is easy to see that the restrictions $R_1|_{F_2}:F_2\to F_1$ and $R_2|_{F_1}:F_1\to F_2$ are homotopy equivalences.

Next consider the general case.
Connect $p_1$ and $p_2$ by a shortest path $\gamma$.
Then, by the convexity mentioned in Section \ref{sec:ser}, every point on $\gamma$ is also a regular point of some admissible map to $\mathbb R^k$.
Take points $p_1=q_1$, $q_2$, \dots, $q_N=p_2$ on $\gamma$ in this order and admissible maps $g_i:X\to\mathbb R^k$ regular at $q_i$ so that their coordinate neighborhoods $g_i^{-1}(I^k(g_i(q_i),a_i))$ cover $\gamma$ (we may assume $g_1=f_1$, $g_N=f_2$).
Pick points $r_i$ from the intersection of the coordinate neighborhoods of $q_i$ and $q_{i+1}$ (put $r_0:=q_1$, $r_N:=q_N$).
Now suppose $\mu\ll a_i$ is small enough and fix trivializations $\hat g_i^{-1}(I^k(g_i(q_i),a_i))\cong\hat g_i^{-1}(g_i(q_i))\times I^k(g_i(q_i),a_i)$ with respect to $\hat g_i$.
Set $G_i^j:=\hat g_i^{-1}(g_i(r_j))$ for $j=i-1,i$.
Then $G_i^{i-1}$ is homeomorphic to $G_i^i$ via the above trivialization.
Furthermore, the argument in the special case shows that $G_i^i$ is homotopy equivalent to $G_{i+1}^i$.
This completes the proof.
\end{proof}

Let $\mathcal U=\{U_\alpha\}_{\alpha=1}^N$ be a finite good covering of $X$.
We may assume that $U_\alpha=\{h_\alpha>0\}$, where $h_\alpha$ is a strictly concave function constructed by Theorem \ref{thm:sc} and attaining its unique maximum at $p_\alpha\in U_\alpha$.
Furthermore, for a subset $A\subset\{1,\dots,N\}$, we define
\[h_A:=\min_{\alpha\in A}h_\alpha,\quad U_A:=\bigcap_{\alpha\in A}U_\alpha=\{h_A>0\}\]
and denote by $p_A\in U_A$ the unique maximum point of $h_A$ (if $U_A$ is nonempty).
Note that the strict concavity of $h_A$ implies that it is regular on $U_A\setminus\{p_A\}$, i.e.\ $|\nabla h_A|>0$.
Moreover, if $h_A$ is $(-c_A)$-concave on $U_A$, where $c_A>0$, then it is $(c_Ar/2)$-regular on $U_A\setminus B(p_A,r)$, i.e.\ $|\nabla h_A|>c_Ar/2$, since
\[h_A(\uparrow_x^{p_A})\ge\frac{h_A(p_A)-h_A(x)}{|p_Ax|}+\frac{c_A}2{|p_Ax|}>\frac{c_A}2r\]
for any $x\in U_A\setminus B(p_A,r)$.

Now, suppose $\mu$ is small enough and lift the situation to $M$.
Let $\hat h_\alpha$ be a natural lift of $h_\alpha$ and set $\hat U_\alpha:=\{\hat h_\alpha>0\}$.
Then  $\hat{\mathcal U}=\{\hat U_\alpha\}_{\alpha=1}^N$ is also an open covering of $M$.
Define $\hat h_A$ and $\hat U_A$ in the same way  as above.
Note that $\hat h_A$ is also $(c_Ar/2)$-regular on $\hat U_A\setminus B(\hat p_A,r)$ for any fixed $r>0$ provided $\mu\ll r$, by the lower semicontinuity of the absolute value of the gradient.
In particular, we may assume that $\mathcal U$ and $\hat{\mathcal U}$ have the same nerve, namely $U_A\neq\emptyset\Leftrightarrow\hat U_A\neq\emptyset$ for any $A$.
Indeed, an argument using the gradient flows shows that a slightly smaller cover $\{\{\hat h_\alpha>\varepsilon\}\}_{\alpha=1}^N$ for some fixed $\varepsilon>0$ has the same nerve as $\hat{\mathcal U}$.

Theorem \ref{thm:main}(2) immediately follows from the next proposition.

\begin{prop}\label{prop:shf}
Let $\mathcal U=\{U_\alpha\}_{\alpha=1}^N$ be a good covering of $X$ as above.
Suppose $X$ has no proper extremal subsets and $\mu$ is small enough.
Then, for any $A$, the lift $\hat U_A$ has the homotopy type of a regular fiber.
Furthermore, if $A\subset A'$, then the inclusion $\hat U_{A'}\hookrightarrow\hat U_A$ is a homotopy equivalence.
\end{prop}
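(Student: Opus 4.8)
The plan is to shrink $\hat U_A$, via the gradient flow of the lifted concave function $\hat h_A$, down to an arbitrarily small neighborhood of $\hat p_A$, and then to recognize the homotopy type of that neighborhood through Perelman's condition (PN); here the hypothesis enters, since $X$ having no proper extremal subsets makes every point of $X$ good by Theorem~\ref{thm:ser}. By Lemma~\ref{lem:rf} it is enough to construct, for each $A$ with $U_A\neq\emptyset$, a homotopy equivalence from $\hat U_A$ onto \emph{some} regular fiber. What makes this nontrivial is that $\hat p_A$ lifts the maximum point of $h_A=\min_{\alpha\in A}h_\alpha$, which may be a very singular point of $X$, so a small metric ball about $\hat p_A$ need not have the homotopy type of a regular fiber a priori; condition (PN) is precisely what remedies this.

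First I would use the gradient flow to retract $\hat U_A$ onto a small ``cap''. Fix a small $r>0$; for $\mu\ll r$ the function $\hat h_A$ is $(c_Ar/2)$-regular on $\hat U_A\setminus B(\hat p_A,r)$. If $0<\tau<\min_{\overline{B(\hat p_A,r)}}\hat h_A$, then the collar $\{0<\hat h_A<\tau\}$ avoids $\overline{B(\hat p_A,r)}$, so there $\hat h_A$ is regular with gradient bounded below while remaining bounded above; the gradient flow of $\hat h_A$ then carries each point of the collar to the level set $\{\hat h_A=\tau\}$ in finite time depending continuously on the point, and reparametrizing gives a deformation retraction of $\hat U_A=\{\hat h_A>0\}$ onto the cap $C_A:=\{\hat h_A\geq\tau\}$. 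Since $h_A$ is strictly concave, $\{h_A\geq\tau\}$ shrinks to $\{p_A\}$ as $\tau\to\max h_A$, so for $\tau$ close to $\max h_A$ and $\mu$ small the cap $C_A$ lies in any prescribed ball about $\hat p_A$.

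Next I would trap $C_A$ between two neighborhoods of $\hat p_A$ furnished by (PN). Apply (PN) to $p_A$ with an outer radius $R_2$ small enough that $B(\hat p_A,R_2)\subset\hat U_A$, obtaining $\rho_2=\rho(p_A,R_2)$; then choose $\tau$ so close to $\max h_A$ that $\{h_A\geq\tau\}\Subset B(p_A,\rho_2)$, and an inner radius $R_1$ (with $r\leq R_1$) small enough that $\overline{B(p_A,R_1)}\subset\{h_A>\tau\}$. For $\mu$ small this $\tau$ still satisfies $0<\tau<\min_{\overline{B(\hat p_A,r)}}\hat h_A$, and $B(\hat p_A,R_1)\subset C_A\subset B(\hat p_A,\rho_2)$. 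Put $\rho_1=\rho(p_A,R_1)$ (shrunk, if necessary, so that $\rho_1\leq R_1$), pick fiber data $(f,q)$ with $q\in B(p_A,\rho_1/2)$ a regular point---so that the regular fiber $F_q=\hat f^{-1}(f(q))$ lies in $B(\hat p_A,\rho_1)$ for $\mu$ small---and apply (PN) once more to $p_A$, using this same $(f,q)$, at the radii $R_1$ and $R_2$, obtaining product neighborhoods $A_1$ and $A_2$. Then
\[
F_q\subset A_1\subset B(\hat p_A,R_1)\subset C_A\subset B(\hat p_A,\rho_2)\subset A_2\subset B(\hat p_A,R_2)\subset\hat U_A,
\]
and by (PN) the inclusions $F_q\hookrightarrow A_1$ and $F_q\hookrightarrow A_2$ are homotopy equivalences, hence so is $A_1\hookrightarrow A_2$.

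Finally I would conclude by a two-out-of-three argument along the chain $A_1\subset C_A\subset A_2\subset\hat U_A$: because $A_1\hookrightarrow A_2$ and $C_A\hookrightarrow\hat U_A$ are homotopy equivalences, every inclusion in the chain induces isomorphisms on all homotopy groups (for instance $C_A\hookrightarrow A_2$ is $\pi_\ast$-surjective via $A_1\subset C_A\subset A_2$ and $\pi_\ast$-injective via $C_A\subset A_2\subset\hat U_A$, after which the others follow), and since all four spaces are ANRs---open subsets of, or retracts of open subsets of, an Alexandrov space---and so have the homotopy type of CW complexes, Whitehead's theorem makes each inclusion a homotopy equivalence; in particular $\hat U_A\simeq A_1\simeq F_q$. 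For the last assertion, let $A\subset A'$ with $U_{A'}\neq\emptyset$, so $\hat U_{A'}\subset\hat U_A$; fix fiber data $(f,q)$ with $q$ a regular point of $U_{A'}$ and a coordinate neighborhood small enough that its lift is contained in $\hat U_{A'}$, and observe it suffices to show $F_q\hookrightarrow\hat U_{A'}$ and $F_q\hookrightarrow\hat U_A$ are homotopy equivalences, for then $\hat U_{A'}\hookrightarrow\hat U_A$ is one by two-out-of-three. Each of these follows by re-running the argument above for $\hat U_{A''}$ with $A''\in\{A,A'\}$, which produces a homotopy equivalence $F_{q'}\hookrightarrow\hat U_{A''}$ for any fiber data with $q'$ near $p_{A''}$; since $q$ need not be near $p_{A''}$, one transports this equivalence along a shortest path from $q$ to such a $q'$ inside $U_{A''}$---a path of fiber-data points, both that set and $U_{A''}$ being convex by Theorem~\ref{thm:good}---exactly as in the proof of Lemma~\ref{lem:rf}, the deformation retractions used there living inside $\hat U_{A''}$ so that ``the inclusion into $\hat U_{A''}$ is a homotopy equivalence'' propagates along the chain of overlapping coordinate neighborhoods. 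I expect the main obstacle to be the scale bookkeeping of the sandwiching step above---ensuring, uniformly for all small $\mu$, that the flow cap $C_A$ is genuinely trapped between two product neighborhoods of $\hat p_A$, so that condition (PN) can be brought to bear---together with the analogous bookkeeping and transport needed for the nested case.
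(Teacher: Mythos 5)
Your proposal is correct and follows essentially the same route as the paper: retract $\hat U_A$ by the gradient flow of $\hat h_A$ onto a small superlevel cap around $\hat p_A$, identify the homotopy type near $\hat p_A$ with a regular fiber via condition (PN) (available because every point of $X$ is good by Theorem \ref{thm:ser}), conclude by a sandwich/two-out-of-three argument, and treat $A\subset A'$ by transporting the fiber equivalences of Lemma \ref{lem:rf} along a path of fiber-data points inside $U_A$ so that all homotopies stay in $\hat U_A$. The only adjustment needed is the order of choices in your sandwiching step: the radius $r$ below which $\hat h_A$ need not be regular must be fixed \emph{after} $\rho_2=\rho(p_A,R_2)$ and after $\tau$, since for a fixed $r$ the constraint $\tau<\min_{\overline B(\hat p_A,r)}\hat h_A$ prevents the cap $\{\hat h_A\ge\tau\}$ from being confined to balls of radius smaller than $r$ --- exactly the bookkeeping the paper performs by choosing its $\varepsilon$-superlevel set after $\rho$ (and, likewise, convexity of the set of fiber-data points comes from Petrunin's parallel transport result quoted in Section \ref{sec:ser}, not from Theorem \ref{thm:good}).
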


\begin{proof}
The notation $U\Subset V$ means that the closure of $U$ is contained in the interior of $V$.
We first prove that $\hat U_A$ has the homotopy type of a regular fiber.
Recall that our assumption implies that every point of $X$ is good in the sense of Definition \ref{dfn:pn} (see Theorem \ref{thm:ser}).
Let $R>0$ be so small that $B(p_A,R)\Subset U_A$ and take $\rho=\rho(p_A,R)$ by the condition (PN).
Choose $\varepsilon>0$ so that $U_A(\varepsilon):=\{h_A>h_A(p_A)-\varepsilon\}\Subset B(p_A,\rho)$.
Take fiber data $(f_A,q_A)$ with $q_A\in U_A(\varepsilon)$.
In short, we have the inclusions
\[q_A\in U_A(\varepsilon)\Subset B(p_A,\rho)\Subset B(p_A,R)\Subset U_A.\]
Now consider their lifts in $M$.
Let $F_A=\hat f_A^{-1}(f_A(q_A))$ be a regular fiber.
Then the condition (PN) yields a product neighborhood $V_A$ for $F_A$ such that $B(\hat p_A,\rho)\subset V_A\subset B(\hat p_A,R)$.
Thus, if $\mu$ is small enough, we have the inclusions
\[F_A\subset\Cl(\hat U_A(\varepsilon))\subset V_A\subset\hat U_A,\]
where $\hat U_A(\varepsilon):=\{\hat h_A>h_A(p_A)-\varepsilon\}$ and $\Cl$ denotes the closure.
Here $F_A\hookrightarrow V_A$ is a homotopy equivalence.
Furthermore, the regularity of $\hat h_A$ implies that $\Cl(\hat U_A(\varepsilon))$ is a deformation retract of $\hat U_A$ by the gradient flow (or the fibration theorem for regular semiconcave functions, see \cite[\S2]{Per:dc}, \cite[\S8]{Pet:semi}, \cite[1.5]{MY:good}).
Therefore $F_A\hookrightarrow\hat U_A$ is a homotopy equivalence.

Next we prove that $\hat U_{A'}\hookrightarrow\hat U_A$ is a homotopy equivalence for any $A\subset A'$.
Take regular fibers $F_A$ and $F_{A'}$ as above such that the inclusions $F_A\hookrightarrow\hat U_A$ and $F_{A'}\hookrightarrow\hat U_{A'}$ are homotopy equivalences.
Let $\Phi:F_{A'}\to F_A$ be a homotopy equivalence constructed in the proof of Lemma \ref{lem:rf}.
Note that the construction is done on an arbitrarily small neighborhood of a shortest path connecting $q_{A'}$ and $q_A$, which can be contained in $U_A$ by the convexity.
Thus one can easily check that the composition $F_{A'}\xrightarrow\Phi F_A\hookrightarrow\hat U_A$ is homotopic to the inclusion $F_{A'}\hookrightarrow\hat U_A$ in $\hat U_A$.
Hence $F_{A'}\hookrightarrow\hat U_A$ is also a homotopy equivalence and so is $\hat U_{A'}\hookrightarrow\hat U_A$.
\end{proof}

\begin{proof}[Proof of Theorem \ref{thm:main}(2)]
Fix a regular fiber $F$.
We have now constructed an open cover $\hat{\mathcal U}$ of $M$ with the same nerve as a good cover $\mathcal U$ of $X$ which satisfies that
\begin{itemize}
\item the cohomology group $H^\ast(\hat U_A)$ is isomorphic to $H^\ast(F)$ for any $A$;
\item the restriction $H^\ast(\hat U_{A'})\to H^\ast(\hat U_A)$ is an isomorphism for any $A\subset A'$.
\end{itemize}
In other words, $H^\ast(\hat U_A)$ defines a locally constant presheaf on $\mathcal U$ with group $H^\ast(F)$.
Thus the standard argument of spectral sequences for fiber bundles can be applied to compute the cohomology of $M$ (see \cite[Chapter III]{BT} for instance).
This completes the proof.
\end{proof}

Next we prove Theorem \ref{thm:main}(1).
Using Proposition \ref{prop:shf}, we first show the homotopy lifting property with respect to a good covering.
All maps below but Gromov-Hausdorff approximations are assumed to be continuous.
Let $I$ denote the unit interval.

\begin{prop}[cf.\ {\cite[2.4]{Per:col}}]\label{prop:lift}
Let $\mathcal U=\{U_\alpha\}_{\alpha=1}^N$ be a good covering of $X$.
Suppose $X$ has no proper extremal subsets and $\mu$ is small enough.
Let $K$ be a finite simplicial complex.
Suppose $\sigma:K\times I\to X$ and $\hat\sigma:K\times\{0\}\to M$ satisfy that
\begin{itemize}
\item for any simplex $\Delta\subset K$, there exists $\alpha$ such that $\sigma(\Delta\times I)\subset U_\alpha$;
\item for any $\Delta\subset K$, if $\sigma(\Delta\times\{0\})\subset U_\alpha$, then $\hat\sigma(\Delta\times\{0\})\subset\hat U_\alpha$.
\end{itemize}
Then, $\hat\sigma$ can be extended to a map on $K\times I$ so that
\begin{itemize}
\item for any $\Delta\subset K$, if $\sigma(\Delta\times I)\subset U_\alpha$, then $\hat\sigma(\Delta\times I)\subset\hat U_\alpha$;
\item for any $\Delta\subset K$, if $\sigma(\Delta\times\{1\})\subset U_\alpha$, then $\hat\sigma(\Delta\times\{1\})\subset\hat U_\alpha$.
\end{itemize}
Furthermore, if $L$ is a subcomplex of $K$ and $\hat\sigma$ is already defined on $L\times I$ so that the above conclusions hold for any $\Delta\subset L$, then we can extend it.
\end{prop}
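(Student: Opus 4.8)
The plan is to argue by induction on the skeleta of $K$ relative to $L$, extending $\hat\sigma$ one simplex at a time. For the single-simplex step, suppose $\Delta$ is a simplex of $K$ not in $L$, the map $\hat\sigma$ is already defined on $(\partial\Delta\times I)\cup(\Delta\times\{0\})$ (and on all lower-dimensional simplices), and the hypotheses tell us that $\sigma(\Delta\times I)\subset U_\alpha$ for some $\alpha$. By the compatibility of the nerves of $\mathcal U$ and $\hat{\mathcal U}$ and the two bulleted hypotheses, the already-defined part of $\hat\sigma$ on $(\partial\Delta\times I)\cup(\Delta\times\{0\})$ lands in $\hat U_\alpha$: indeed each face $\Delta'\subset\Delta$ satisfies $\sigma(\Delta'\times I)\subset U_\alpha$, so by induction $\hat\sigma(\Delta'\times I)\subset\hat U_\alpha$, and $\hat\sigma(\Delta\times\{0\})\subset\hat U_\alpha$ by the second initial hypothesis together with the choice of $\alpha$. (Here one should be slightly careful: the simplex $\Delta$ may lie in several $U_\alpha$'s, and one picks one such $\alpha$ once and for all, checking that the previously chosen values on the boundary still lie in that particular $\hat U_\alpha$; this is where the ``for any $\alpha$'' phrasing in the conclusion is used, and it follows from the fact that $H^\ast$ and $\pi_\ast$ of the intersections are detected compatibly via the regular fiber in Proposition~\ref{prop:shf}.)

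Next I would invoke Proposition~\ref{prop:shf}: $\hat U_\alpha$ has the homotopy type of a regular fiber, so in particular it is nonempty and path-connected (a regular fiber is connected, being conical with vertex a point). More importantly, the set $(\partial\Delta\times I)\cup(\Delta\times\{0\})$ is a deformation retract of $\Delta\times I$ — it is the standard ``box minus top face'' pair $(\Delta\times I,\ (\partial\Delta\times I)\cup(\Delta\times\{0\}))$, which has the homotopy extension property. Therefore the partially-defined map $(\partial\Delta\times I)\cup(\Delta\times\{0\})\to\hat U_\alpha$ extends to a map $\Delta\times I\to\hat U_\alpha$; no obstruction arises because we are extending into $\hat U_\alpha$ itself, and the domain retracts onto the subspace on which the map is already given. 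This extension automatically satisfies the first conclusion for $\Delta$ (since it lands in $\hat U_\alpha$), and the second conclusion for $\Delta$ at $t=1$ follows because $\Delta\times\{1\}\subset\Delta\times I$ maps into $\hat U_\alpha$; if moreover $\sigma(\Delta\times\{1\})\subset U_\beta$ for some other $\beta$, then the face lies in $U_\alpha\cap U_\beta=U_{\{\alpha,\beta\}}$, whose lift is again homotopy-equivalent to a regular fiber and, crucially, $\hat U_{\{\alpha,\beta\}}\hookrightarrow\hat U_\alpha$ is a homotopy equivalence — so after a further small homotopy supported near $\Delta\times\{1\}$ (using the homotopy extension property once more) one can push the image of $\Delta\times\{1\}$ into $\hat U_{\{\alpha,\beta\}}$, hence into $\hat U_\beta$. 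One handles all $\beta$ with $\sigma(\Delta\times\{1\})\subset U_\beta$ simultaneously by pushing into the intersection of the corresponding $\hat U$'s.

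Assembling these simplex-by-simplex extensions in order of increasing dimension, using finiteness of $K$, gives the desired global extension of $\hat\sigma$; since $L$ is a subcomplex on which $\hat\sigma$ already satisfies the conclusions, the induction is simply started on $L$ rather than on the empty complex, which yields the relative statement. The main obstacle, and the step deserving the most care, is the second one: verifying that at each stage the already-chosen boundary values genuinely lie in the \emph{single} chosen $\hat U_\alpha$, and then arranging the compatibility of the conclusion over \emph{all} indices $\alpha$ (resp. all $\beta$ at $t=1$) containing the relevant simplex. This is exactly where Proposition~\ref{prop:shf} is essential — both the statement that each $\hat U_A$ has the homotopy type of a regular fiber and that the inclusions $\hat U_{A'}\hookrightarrow\hat U_A$ are homotopy equivalences — since it is these facts that let us move images freely among the various $\hat U_\alpha$ containing a given simplex without creating obstructions. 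Everything else is the standard homotopy-extension bookkeeping for the pair $(\Delta\times I,(\partial\Delta\times I)\cup(\Delta\times\{0\}))$, exactly as in the proof of \cite[2.4]{Per:col}, but now with the good covering $\hat{\mathcal U}$ in place of the $\theta$-approximation, which is what removes the dimension restriction.
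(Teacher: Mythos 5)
Your overall strategy --- induction over the skeleta rel $L$, extending over each $\Delta\times I$ by homotopy-extension bookkeeping and invoking Proposition \ref{prop:shf} --- is the same as the paper's, but there is a genuine gap at the decisive point. You extend the partial map into a \emph{single} chosen $\hat U_\alpha$ and then assert that the required ``for any $\alpha$'' conclusion follows because ``$H^\ast$ and $\pi_\ast$ of the intersections are detected compatibly via the regular fiber.'' That is not an argument: containment of the image in $\hat U_\beta$ for the other admissible indices $\beta$ is a set-theoretic condition, and no homotopy equivalence between $\hat U_\alpha$, $\hat U_\beta$ and a regular fiber can supply it. Moreover, the full ``for any $\alpha$'' conclusion in lower dimensions is exactly what the induction consumes: if $\Delta$ is a face of a higher-dimensional simplex $\Delta''$ with $\sigma(\Delta''\times I)\subset U_\beta$, you need $\hat\sigma(\Delta\times I)\subset\hat U_\beta$ for that particular $\beta$, which your single-index extension does not provide (your chosen index for $\Delta$ may differ from $\beta$). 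The same defect reappears at $t=1$: the compression of $\Delta\times\{1\}$ must be performed relative to $\partial\Delta\times\{1\}$, since the boundary values are shared with adjacent simplices and cannot be moved, and this requires knowing beforehand that $\hat\sigma(\partial\Delta\times\{1\})$ lies in the lift of the \emph{full} intersection of the $U_\beta$'s containing $\sigma(\Delta\times\{1\})$ --- again a consequence of the stronger inductive statement you did not secure.

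The repair is the paper's argument: for each $\Delta\subset K\setminus L$ let $A\subset A'$ be the \emph{maximal} index sets with $\sigma(\Delta\times I)\subset U_A$ and $\sigma(\Delta\times\{1\})\subset U_{A'}$. The induction hypothesis then gives $\hat\sigma(\Delta\times\{0\}\cup\partial\Delta\times I)\subset\hat U_A$ and $\hat\sigma(\partial\Delta\times\{1\})\subset\hat U_{A'}$, so $\hat\sigma|_{\Delta\times\{0\}\cup\partial\Delta\times I}$ defines a class in $\pi_i(\hat U_A,\hat U_{A'})$, $i=\dim\Delta$, which vanishes because $\hat U_{A'}\hookrightarrow\hat U_A$ is a homotopy equivalence by Proposition \ref{prop:shf}; the resulting null-homotopy extends $\hat\sigma$ over $\Delta\times I$ with image in $\hat U_A$ and with $\Delta\times\{1\}$ landing in $\hat U_{A'}$, which yields both ``for any $\alpha$'' conclusions in one stroke. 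Your HEP mechanics and appeal to Proposition \ref{prop:shf} are in the right spirit, but as written the single-$\alpha$ extension followed by a posteriori pushing does not close the induction.
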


\begin{proof}
The proof is by induction on the dimension of simplices as in \cite[2.4]{Per:col}.
Fix a simplex $\Delta\subset K\setminus L$.
By the induction hypothesis, $\hat\sigma$ is already defined on $\Delta\times\{0\}\cup\partial\Delta\times I$.
Let us extend it over $\Delta\times I$.
Let $A\subset A'\subset\{1,\dots,N\}$ be the maximal subsets satisfying
\[\sigma(\Delta\times I)\subset U_A,\quad\sigma(\Delta\times\{1\})\subset U_{A'}.\]
Then the induction hypothesis implies
\[\hat\sigma(\Delta\times\{0\}\cup\partial\Delta\times I)\subset\hat U_A,\quad\hat\sigma(\partial\Delta\times\{1\})\subset\hat U_{A'}.\]
Since $\hat U_{A'}\hookrightarrow\hat U_A$ is a homotopy equivalence by Proposition \ref{prop:shf}, the relative homotopy group $\pi_i(\hat U_A, \hat U_{A'})$ vanishes, where $i=\dim\Delta$.
Hence, if we regard $\hat\sigma|_{\Delta\times\{0\}\cup\partial\Delta\times I}$ as an element of $\pi_i(\hat U_A, \hat U_{A'})$ in a natural way, it is null-homotopic in $(\hat U_A, \hat U_{A'})$.
Using this homotopy, we can extend $\hat\sigma$ over $\Delta\times I$ so that $\hat\sigma(\Delta\times I)\subset\hat U_A$ and $\hat \sigma(\Delta\times\{1\})\subset\hat U_{A'}$.
\end{proof}

The above proposition immediately implies the homotopy lifting property with respect to the approximation $\theta:M\to X$, which is a refinement of Corollary \ref{cor:ser} in that it does not depend on the dimension of simplicial complexes.

\begin{cor}\label{cor:lift}
Suppose $X$ has no proper extremal subsets.
Then, for any $R>0$, there exists $r>0$ such that the following holds provided $\mu$ is small enough:
Let $K$ be a finite simplicial complex and $L$ a subcomplex.
Suppose $\sigma:K\times I\to X$ and $\hat \sigma:K\times\{0\}\cup L\times I\to M$ satisfy $|\sigma,\theta\hat \sigma|<r$ on $K\times\{0\}\cup L\times I$.
Then, $\hat\sigma$ can be extended to a map on $K\times I$ so that $|\sigma,\theta\hat\sigma|<R$ on $K\times I$.
\end{cor}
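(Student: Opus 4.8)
The plan is to deduce Corollary \ref{cor:lift} from Proposition \ref{prop:lift} by a subdivision argument. Since the good covering will be chosen first, the argument is — unlike Perelman's — insensitive to $\dim K$.

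First, given $R>0$, I would fix a finite good covering $\mathcal U=\{U_\alpha\}_{\alpha=1}^N$ of $X$ all of whose members have diameter $<R/4$; this exists by Theorem \ref{thm:sc} and the compactness of $X$. Writing $U_\alpha=\{h_\alpha>0\}$ as in Section \ref{sec:prf} and choosing $\varepsilon_0>0$ so that the shrunken family $\{h_\alpha>\varepsilon_0\}_\alpha$ still covers $X$, I would then pick $r>0$ (and require $\mu$ small) so small that $h_\alpha(x)>\varepsilon_0$ and $|x,\theta(y)|<r$ force $\hat h_\alpha(y)>0$ for all $\alpha$, all $x\in X$, all $y\in M$; this uses that $\hat h_\alpha$ and $h_\alpha\circ\theta$ differ by $O(\mu)$ and that $h_\alpha$ is Lipschitz. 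As in Section \ref{sec:prf} I may also assume $\mathcal U$ and $\hat{\mathcal U}$ have the same nerve. Note that $r$ depends only on $\mathcal U$, hence only on $R$ and $X$.

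Given $\sigma$ and $\hat\sigma$ as in the statement, the open covering $\{\sigma^{-1}(\{h_\alpha>\varepsilon_0\})\}_\alpha$ of the compact space $K\times I$ has a positive Lebesgue number, so after a subdivision of $K$ restricting to a subdivision of $L$ and a partition $0=t_0<\dots<t_m=1$, both sufficiently fine, every $\sigma(\Delta\times[t_{j-1},t_j])$ lies in some $\{h_\alpha>\varepsilon_0\}\subset U_\alpha$. I would extend $\hat\sigma$ over $K\times I$ by induction on $j$, at step $j$ applying Proposition \ref{prop:lift} to the slab $K\times[t_{j-1},t_j]$ (affinely reparametrized to $K\times I$) with distinguished subcomplex $L$: here $\hat\sigma$ is already defined on $K\times\{t_{j-1}\}$ (the original map for $j=1$, the output of step $j-1$ otherwise) and on $L\times[t_{j-1},t_j]$ (a restriction of the original map). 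The first hypothesis of Proposition \ref{prop:lift} holds by the choice of subdivision; the compatibility of $\hat\sigma$ with $\hat{\mathcal U}$ along $K\times\{t_{j-1}\}$ holds for $j=1$ by $|\sigma,\theta\hat\sigma|<r$ and the choice of $r$, and for $j\ge2$ it is precisely the conclusion of step $j-1$; the compatibility along $L\times[t_{j-1},t_j]$ follows, like the case $j=1$, from $|\sigma,\theta\hat\sigma|<r$. Proposition \ref{prop:lift} then extends $\hat\sigma$ over the slab with $\hat\sigma(\Delta\times[t_{j-1},t_j])\subset\hat U_\alpha$ whenever $\sigma(\Delta\times[t_{j-1},t_j])\subset U_\alpha$; since $\diam U_\alpha<R/4$, the lift $\hat U_\alpha$ sits in a small ball, and $\theta$ is a $\mu$-approximation, one gets $|\sigma,\theta\hat\sigma|<R$ on that slab, hence on all of $K\times I$.

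The step I expect to be the main obstacle is arranging the compatibility hypothesis of Proposition \ref{prop:lift} for the initial lift: the crude $r$-closeness of $\hat\sigma$ to $\sigma$ on $K\times\{0\}\cup L\times I$ must be converted into honest membership of $\hat\sigma$ in the correct members of $\hat{\mathcal U}$, which is the whole reason $r$ has to be chosen small relative to the fixed covering and the margin $\varepsilon_0$ has to be built in (and, should $\hat\sigma$ meet a member of $\hat{\mathcal U}$ only marginally, one first adjusts it by a small preliminary homotopy, rel $L\times\{0\}$ where the compatibility is already given, absorbed into the first slab). Everything else is bookkeeping; in particular the number $m$ of slabs and the fineness of the subdivision depend only on the modulus of continuity of $\sigma$ and not on $\dim K$, which is exactly the improvement over Corollary \ref{cor:ser}.
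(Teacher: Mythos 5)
Your overall scheme---fix a good cover of small diameter in advance, subdivide $K$ and $[0,1]$ so that $\sigma$ maps each cell of each slab into a member of the cover, and iterate Proposition \ref{prop:lift} slab by slab, feeding the conclusion at the level $\{t_j\}$ into the next step---is the same as the paper's. The gap is in how you verify the second hypothesis of Proposition \ref{prop:lift} for the initial data. That hypothesis is an implication quantified over \emph{all} $\alpha$: whenever $\sigma(\Delta\times\{0\})\subset U_\alpha$ one must have $\hat\sigma(\Delta\times\{0\})\subset\hat U_\alpha$; and the proof of the proposition uses this in full strength, since for each simplex it takes the \emph{maximal} index set $A$ with $\sigma(\Delta\times I)\subset U_A$ and needs $\hat\sigma(\Delta\times\{0\}\cup\partial\Delta\times I)\subset\hat U_A$. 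Your choice of $r$ (via the shrunken sets $\{h_\alpha>\varepsilon_0\}$) only yields the implication for those $\alpha$ in which $\sigma$ lands with margin $\varepsilon_0$; if $\sigma(\Delta\times\{0\})$ lies in $U_\alpha$ only marginally (values of $h_\alpha$ in $(0,\varepsilon_0]$), then $r$-closeness gives no control on $\hat h_\alpha\circ\hat\sigma$, so the hypothesis of Proposition \ref{prop:lift} is not verified and the proposition cannot be invoked as stated. The same defect occurs along $L\times[t_{j-1},t_j]$ at \emph{every} slab, where the compatibility must come from the original, merely $r$-close, $\hat\sigma$, so it cannot be ``absorbed into the first slab''; and the suggested preliminary homotopy is not constructed---pushing $\hat\sigma$ simultaneously into all relevant $\hat U_\alpha$, continuously in $\Delta$ and relative to a part of the boundary where nothing better than $r$-closeness is known, is essentially the original lifting problem again.

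The paper's remedy puts the margin on the other side: keep $U_\alpha=\{h_\alpha>0\}$ but \emph{enlarge} the lifted sets, declaring $\hat U_\alpha:=\{\hat h_\alpha>-\varepsilon\}$ for some fixed $0<\varepsilon\ll R$; this does not change the nerve (as observed before Proposition \ref{prop:shf}), so Propositions \ref{prop:shf} and \ref{prop:lift} still apply to the modified $\hat{\mathcal U}$. Then, for $r$ and $\mu$ small compared with $\varepsilon$, the containment $\sigma(\Delta'\times\{0\})\subset U_\alpha$ together with $|\sigma,\theta\hat\sigma|<r$ forces $\hat h_\alpha>-\varepsilon$ on $\hat\sigma(\Delta'\times\{0\})$ for \emph{every} such $\alpha$ at once, marginal or not, and likewise on $L\times[t_{j-1},t_j]$ and $L\times\{t_j\}$ for all $j$. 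With that single change your argument goes through and coincides with the paper's proof; without it, the key verification step fails.
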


\begin{proof}
Let $\mathcal U=\{U_\alpha\}_{\alpha=1}^N$ be a good covering of $X$ as before.
We may assume that $\diam U_\alpha<R/2$ for any $\alpha$.
We may further assume that $\hat U_\alpha$ is slightly larger than $U_\alpha$, that is, $\hat U_\alpha=\{\hat h_\alpha>-\varepsilon\}$ for some fixed $0<\varepsilon\ll R$ whereas $U_\alpha=\{h_\alpha>0\}$ (this does not change the nerve as we have seen before Proposition \ref{prop:shf}).
Choose subdivisions $K'$ of $K$ and $0=t_0<t_1<\dots<t_J=1$ of $[0,1]$ so that for any $\Delta'\subset K'$ and $1\le j\le J$, there exists $\alpha$ such that $\sigma(\Delta'\times[t_{j-1},t_j])\subset U_\alpha$.
Furthermore, if $r$ and  $\mu$ are small enough compared with $\varepsilon$ (but independent of $\sigma$), we have
\[\sigma(\Delta'\times\{0\})\subset U_\alpha\Longrightarrow\hat\sigma(\Delta'\times\{0\})\subset\hat U_\alpha\]
for any $\Delta'\subset K'$ and
\begin{gather*}
\sigma(\Delta'\times[t_{j-1},t_j])\subset U_\alpha\Longrightarrow\hat\sigma(\Delta'\times[t_{j-1},t_j])\subset\hat U_\alpha,\\
\sigma(\Delta'\times\{t_j\})\subset U_\alpha\Longrightarrow\hat\sigma(\Delta'\times\{t_j\})\subset\hat U_\alpha.
\end{gather*}
for any $\Delta'\subset L$ and $1\le j\le J$.
Then, by Proposition \ref{prop:lift}, we can extend $\hat\sigma$ over $K'\times[t_0,t_1]$ so that
\begin{gather*}
\sigma(\Delta'\times[t_0,t_1])\subset U_\alpha\Longrightarrow\hat\sigma(\Delta'\times[t_0,t_1])\subset\hat U_\alpha,\\
\sigma(\Delta'\times\{t_1\})\subset U_\alpha\Longrightarrow\hat\sigma(\Delta'\times\{t_1\})\subset\hat U_\alpha
\end{gather*}
for any $\Delta'\subset K'$.
The second condition enables us to repeat this procedure to obtain $\hat\sigma:K\times I\to M$ satisfying
\[\sigma(\Delta'\times [t_{j-1},t_j])\subset U_\alpha\Longrightarrow\hat\sigma(\Delta'\times [t_{j-1},t_j])\subset\hat U_\alpha\]
for any $\Delta'\subset K'$ and $1\le j\le J$.
Since $\diam U_\alpha<R/2$, this implies $|\sigma,\theta\hat\sigma|<R$.
\end{proof}

For the proof of Theorem \ref{thm:main}(1), we need the following basic lemma, which also uses a good covering (here the absence of proper extremal subsets is not needed).

\begin{lem}\label{lem:hom}
There exists a constant $R_0>0$ such that for any finite simplicial complex $K$ and its subcomplex $L$, any two maps $\sigma_i:(K,L)\to(X,p)$ ($i=0,1$) that are $R_0$-close to each other are homotopic relative to $L$.
\end{lem}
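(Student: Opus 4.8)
The plan is to reduce the statement to the homotopy lifting property along a good covering, exactly as in the proof of Proposition \ref{prop:lift}, but now with $M$ replaced by $X$ itself (so that no collapsing is involved and $\mu$ plays no role). First I would fix a finite good covering $\mathcal U=\{U_\alpha\}_{\alpha=1}^N$ of $X$ with each $U_\alpha$ of small diameter. By Theorem \ref{thm:good}, every nonempty intersection $U_A$ is strongly Lipschitz contractible, hence in particular contractible, and the inclusion $U_{A'}\hookrightarrow U_A$ for $A\subset A'$ is a homotopy equivalence (both spaces are contractible). This is the analogue of Proposition \ref{prop:shf} at the level of $X$, and it is where the good covering does its work.

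Next I would set $R_0$ to be a Lebesgue-type number for $\mathcal U$: specifically, choose $R_0>0$ so small that any subset of $X$ of diameter $<2R_0$ is contained in some $U_\alpha$. Given two maps $\sigma_0,\sigma_1:(K,L)\to(X,p)$ that are $R_0$-close, define $\sigma:K\times I\to X$ on $K\times\{0,1\}$ by $\sigma_0$ and $\sigma_1$, and on $L\times I$ by the constant map to $p$; this is consistent since $\sigma_i(L)=p$. The $R_0$-closeness guarantees that after subdividing $K$, for each simplex $\Delta$ the set $\sigma_0(\Delta)\cup\sigma_1(\Delta)$ has diameter $<2R_0$ and so lies in a single $U_\alpha$; taking $\sigma$ on $\Delta\times I$ to be, say, a straight-line (in a chart) or cone homotopy inside that $U_\alpha$ — using that $U_\alpha$ is contractible — produces a map $\sigma:K\times I\to X$ such that $\sigma(\Delta\times I)\subset U_\alpha$ for a suitable $\alpha$ for every simplex $\Delta$, and $\sigma(L\times I)=p$.

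Now I would run the inductive extension argument of Proposition \ref{prop:lift} verbatim, with $X$ in place of $M$ and $\sigma$ itself as the ``lift'': one builds a homotopy $H:K\times I\times I\to X$ from $\sigma_0$ to $\sigma_1$, where the extra $I$-coordinate carries the homotopy, by extending over simplices $\Delta\times I$ in increasing order of dimension. At each step one has a map on $\Delta\times\partial(I\times I)$ (roughly) into $U_A$, where $\sigma(\Delta\times I)\subset U_A$, and since $U_A$ is contractible the obstruction in $\pi_i(U_A)$ vanishes, so the extension exists; compatibility with $L$ is automatic because everything over $L$ is the constant map $p$. The output is a homotopy rel $L$ from $\sigma_0$ to $\sigma_1$. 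The main obstacle is purely bookkeeping: organizing the cube-subdivision and the nerve condition so that the inductive extension has the correct domain at each stage — there is no real geometric content beyond the contractibility of the $U_A$, which Theorem \ref{thm:good} hands us for free.
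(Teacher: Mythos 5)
Your argument is essentially the paper's proof: take $R_0$ to be (half) the Lebesgue number of a good covering, subdivide $K$ so that $\sigma_0(\Delta')\cup\sigma_1(\Delta')$ lies in a common $U_\alpha$, and construct the homotopy rel $L$ by induction on the skeleta using the contractibility of the nonempty intersections $U_A$ guaranteed by Theorem \ref{thm:good}. The only cosmetic remarks are that your second stage (the map $H$ on $K\times I\times I$) is redundant, since the skeletal construction of $\sigma$ on $K\times I$ already is the desired homotopy, and the aside about a ``straight-line homotopy in a chart'' has no meaning in an Alexandrov space and should be dropped in favor of the cone/contractibility extension you also mention.
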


\begin{proof}
Let $R_0$ be half the Lebesgue number of a good covering $\mathcal U$ of $X$.
Then, we can choose a subdivision $K'$ of $K$ so that for any $\Delta'\subset K'$, there exists $\alpha$ such that $\sigma_0(\Delta')\cup\sigma_1(\Delta')\subset U_\alpha$.
Using the contractibility of $\mathcal U$, we can define a homotopy $\sigma_t$ ($0\le t\le1$) inductively on the skeleta so that
\[\sigma_0(\Delta')\cup\sigma_1(\Delta')\subset U_\alpha\Longrightarrow\sigma_t(\Delta')\subset U_\alpha\]
for any $\Delta'\subset K'$ and $\sigma_t|_L\equiv p$.
\end{proof}

We are now in a position to prove Theorem \ref{thm:main}(1).
The proof is almost the same as in \cite{Per:col}, except that we use good coverings.

\begin{proof}[Proof of Theorem \ref{thm:main}(1)]
Fix fiber data $(f,p)$ and let $F=\hat f^{-1}(f(p))$ be its regular fiber.
We may assume $\hat p\in F$ by the $c(f)$-openness of $\hat f$, where $c(f)$ is a constant depending only on $f$.
We may further assume $\theta(F)=p$ since the bi-Lipschitzness of $f$ shows $\diam F\le c(f)\mu$.
Recall that there exists a trivialization $\hat f^{-1}(I^k(f(p),a)\cong F\times I^k(f(p),a)$ with respect to $\hat f$ for some fixed $a>0$.
In particular, $\hat f^{-1}(I^k(f(p),a))$ admits a deformation retraction onto $F$.

Fix $r_0>0$ such that $r_0<R_0/2$ and $r_0\ll a$, where $R_0$ is the constant of Lemma \ref{lem:hom}.
Suppose $\mu\ll r_0$ is small enough.
Let
\[\sigma:(I^i,\partial I^i)\to (X,p),\quad\hat\sigma:(I^i,\partial I^i,J^{i-1})\to(M,F,\hat p),\]
where $I^i=[0,1]^i$ and $J^{i-1}=\partial I^i\setminus I^{i-1}\times\{1\}$.
We say that $\hat\sigma$ is a \textit{lift} of $\sigma$, or $\sigma$ is a \textit{projection} of $\hat\sigma$, if $|\sigma,\theta\hat\sigma|<r_0$ for the approximation $\theta:M\to X$.
Then, the desired isomorphism $\pi_i(X,p)\cong\pi_i(M,F,\hat p)$ is given by the correspondence $[\sigma]\leftrightarrow[\hat\sigma]$.
It suffices to show the following four properties.

\begin{proof}[Existence of $\sigma$]\renewcommand{\qedsymbol}{}
Let us show the existence of a projection $\sigma$ for given $\hat\sigma$.
Take a good covering $\mathcal U$ of $X$ such that $\diam U_\alpha< r_0$ for any $\alpha$.
Suppose $\mu$ is less than half the Lebesgue number of $\mathcal U$.
Then we can choose a triangulation $T$ of $I^i$ so that for any simplex $\Delta\subset T$, there exists $\alpha$ such that $\theta(\hat\sigma(\Delta))\subset U_\alpha$.
Define $\sigma(v):=\theta(\hat\sigma(v))$ for any vertex $v\in T$ and $\sigma|_{\partial I^i}:\equiv p$ $(=\theta(F))$.
Then the contractibility of $\mathcal U$ enables us to define $\sigma$ inductively on the skeleta of $T$ so that
\[\sigma(\partial\Delta)\subset U_\alpha\Longrightarrow\sigma(\Delta)\subset U_\alpha\]
for any $\Delta\subset T$.
In particular $\theta(\hat\sigma(\Delta))\subset U_\alpha\Rightarrow\sigma(\Delta)\subset U_\alpha$.
Since $\diam U_\alpha< r_0$, this implies that $\sigma$ is a projection of $\hat\sigma$.
\end{proof}

\begin{proof}[Well-definedness of ${[\sigma]}$]\renewcommand{\qedsymbol}{}
Let us prove the well-definedness of the map $[\hat\sigma]\mapsto[\sigma]$.
It is obvious from Lemma \ref{lem:hom} that $[\sigma]$ is independent of the choice of a projection $\sigma$ of given $\hat\sigma$.
Furthermore, if there exists a homotopy between two $\hat\sigma$'s, then one can construct a homotopy between some projections of them  by the same argument as in the previous paragraph.
Thus $[\sigma]$ is also independent of the choice of a representative $\hat\sigma$ of $[\hat\sigma]$.
\end{proof}

\begin{proof}[Existence of $\hat\sigma$]\renewcommand{\qedsymbol}{}
Let us show the existence of a lift $\hat\sigma$ for given $\sigma$.
Define $\hat\sigma$ on $I^{i-1}\times\{0\}\cup\partial I^{i-1}\times I$ by the constant map to $\hat p$ and apply Corollary \ref{cor:lift} to it.
Then $\hat\sigma$ extends over $I^i$ so that $|\sigma,\theta\hat\sigma|<r$ for some fixed $r\ll r_0$ since $\mu\ll r_0$.
In particular $\hat\sigma(I^{i-1}\times\{1\})\subset B(\hat p,2r)$.
Then, one can perturb it by using the deformation retraction to $F$ so that $\hat\sigma(I^{i-1}\times\{1\})\subset F$ and $|\sigma,\theta\hat\sigma|<r_0$.
\end{proof}

\begin{proof}[Well-definedness of ${[\hat\sigma]}$]\renewcommand{\qedsymbol}{}
Let us prove the well-definedness of the map $[\sigma]\mapsto[\hat\sigma]$.
Let $H:(I^i,\partial I^i)\times I\to(X,p)$ be a homotopy between $\sigma_0$ and $\sigma_1$, and let $\hat\sigma_0$ and $\hat\sigma_1$ be arbitrary lifts of them.
Define $\hat H:\partial I^{i+1}\setminus(I^{i-1}\times\{1\}\times I)\to M$ by
\[\hat H|_{I^i\times\{0\}}:=\hat\sigma_0,\quad\hat H|_{I^i\times\{1\}}:=\hat\sigma_1,\quad\hat H|_{J^{i-1}\times I}:\equiv\hat p.\]
Then, by Corollary \ref{cor:lift}, $\hat H$ extends over $I^{i+1}$ so that $|H,\theta\hat H|\ll a$ since $r_0\ll a$ (where $K=I^{i-1}\times\{0\}\times I$ and $L=\partial K$).
In particular $\hat H(I^{i-1}\times\{1\}\times I)\subset \hat f^{-1}(I^k(f(p),a))$.
Again one can deform it so that $\hat H(I^{i-1}\times\{1\}\times I)\subset F$, which gives a relative homotopy between $\hat\sigma_0$ and $\hat\sigma_1$.
\end{proof}

Therefore the map $[\sigma]\mapsto[\hat\sigma]$ is well-defined and is a bijection (clearly a homomorphism).
This completes the proof of Theorem \ref{thm:main}(1).
\end{proof}

\section{Generalization to extremal subsets}\label{sec:gen}

In this section, we generalize the results of the previous section to each primitive extremal subset $E$ of the limit space $X$ and prove Theorem \ref{thm:ext}.
In Section \ref{sec:pte}, we extend the notion of a good point of Definition \ref{dfn:pn} to each primitive extremal subset and give a generalization of Theorem \ref{thm:ser} (Theorem \ref{thm:sere}).
In Section \ref{sec:cove}, using it, we prove generalizations of Propositions \ref{prop:shf} and \ref{prop:lift} (Propositions \ref{prop:shfe} and \ref{prop:lifte}), from which Theorem \ref{thm:ext} follows.

As in the previous section, $X$ denotes a fixed $k$-dimensional compact Alexandrov space, where $k<n$, and $M$ denotes a variable $n$-dimensional Alexandrov space that is $\mu$-close to $X$.
We fix $\mu$-approximations $\theta:M\to X$ and $\eta:X\to M$ such that $|\theta\circ\eta,\id_X|<\mu$.
The hat symbol $\hat\ $ is used to indicate natural lifts of objects on $X$ to $M$.
Moreover, in this section, $E$ denotes an $m$-dimensional primitive extremal subset of $X$.

\subsection{Good points of extremal subsets}\label{sec:pte}

In this section, we extend the notion of a good point of Definition \ref{dfn:pn} to each primitive extremal subset and prove a generalization of Theorem \ref{thm:ser}.

Let us first list a few basic properties of regular admissible maps on extremal subsets which will be used many times in this section.
See \cite[\S2]{PP:ext} for the proofs (see also \cite[3.18]{F:reg} for the denseness).
Let $E$ be an $m$-dimensional primitive extremal subset of an Alexandrov space $X$.

\begin{itemize}
\item If an admissible map $f:X\to\mathbb R^l$ is regular at $p\in E$, then $l\le m$ and $f|_E$ is open near $p$.
In particular, if $l=m$, it is a local homeomorphism near $p$.
\item The set of points $p\in E$ at which some admissible map $f:X\to\mathbb R^m$ is regular is open and dense in $E$.
\end{itemize}

Next we recall the characterization of extremal subsets in terms of canonical neighborhoods, essentially proved in \cite{Per:col}.

\begin{lem}\label{lem:s}
Let $S$ be a subset of an Alexandrov space $X$ satisfying the following property:
\begin{itemize}
\item[\rm($\ast$)] for any canonical neighborhood $K$ of $p\in X$ with respect to an admissible map $(f,f_{l+1}):X\to\mathbb R^l\times\mathbb R$, either $f_{l+1}^{-1}(0)\cap K\subset S$ or $f_{l+1}^{-1}(0)\cap K\cap S=\emptyset$ holds.
\end{itemize}
Then the closure of $S$ is an extremal subset of $X$.
\end{lem}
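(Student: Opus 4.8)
The plan is to verify that $\Cl S$ satisfies condition (E) of Definition \ref{dfn:ext}, i.e.\ that every point $p\in\Cl S$ which is a local minimum of $\dist_q|_{\Cl S}$ for some $q\notin\Cl S$ is a critical point of $\dist_q$. The idea is that non-criticality of $\dist_q$ at $p$ means there is a gradient-like direction for $\dist_q$ at $p$, which can be built into an admissible map; complementing it and applying the canonical neighborhood theorem produces a slice $f_{l+1}^{-1}(0)\cap K$ through $p$ on which $\dist_q$ strictly decreases in the direction away from $q$, so nearby points of $S$ (which exist since $p\in\Cl S$, and the whole slice lies in $S$ by ($\ast$)) are closer to $q$ than $p$ — contradicting the local-minimum assumption. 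I will also need the easy preliminary remark that condition ($\ast$) propagates from $S$ to $\Cl S$: if a canonical neighborhood slice meets $\Cl S$, it meets $S$ (slices are open in their own dimension and $S$ is dense where it is relevant), so the same dichotomy holds for $\Cl S$; this lets me work with $\Cl S$ directly.

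The key steps, in order: First, assume for contradiction that $p\in\Cl S$ is a local minimum of $\dist_q|_{\Cl S}$ with $q\notin\Cl S$ but $p$ is \emph{not} critical for $\dist_q$, so $d_p\dist_q(\xi)>0$ for some $\xi\in\Sigma_p$; equivalently $\langle\Uparrow_p^q,\xi\rangle<0$. Second, realize $\dist_q$ (suitably composed with a smooth increasing concave $\varphi$) as the coordinate of an admissible map and check it is regular at $p$ in the sense of Definition \ref{dfn:adm}: the single-coordinate regularity condition (2) is exactly the existence of such a $\xi$, and condition (1) is vacuous for $l=1$. Third, invoke the canonical neighborhood theorem \ref{thm:nbhd} (using complementability/incomplementability: either complement $\dist_q$ to a regular admissible map and take a canonical neighborhood respecting it, or use Theorem \ref{thm:nbhd} directly when incomplementable) to obtain a canonical neighborhood $K=K(p,a)$ with respect to an admissible map $(f,f_{l+1})$ whose first coordinate is (a reparametrization of) $\dist_q$. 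Fourth, examine the slice $\Sigma:=f_{l+1}^{-1}(0)\cap K$: by conclusion (4) of Theorem \ref{thm:nbhd}, $f$ maps $\Sigma$ bijectively onto a cube $I^l(f(p),a)$, so $\Sigma$ contains points on which the first coordinate — hence $\dist_q$ — takes values strictly smaller than $\dist_q(p)$, arbitrarily close to $p$. Fifth, apply ($\ast$) to $\Cl S$: since $p\in\Sigma\cap\Cl S\neq\emptyset$, we must have $\Sigma\subset\Cl S$; therefore these nearby points of smaller $\dist_q$-value lie in $\Cl S$, contradicting that $p$ is a local minimum of $\dist_q|_{\Cl S}$. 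This contradiction establishes (E), so $\Cl S$ is extremal.

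I expect the main obstacle to be the second and third steps — making precise that a direction $\xi$ along which $\dist_q$ increases can always be packaged into an admissible map that is regular at $p$, and then obtaining a canonical neighborhood whose zero-slice of the last coordinate actually passes through $p$ with $\dist_q$ non-constant on it. Definition \ref{dfn:adm} and Theorem \ref{thm:nbhd} are stated for admissible maps built from distance functions, and one must be slightly careful about the incomplementable case (where Theorem \ref{thm:nbhd} applies to $\bar f=\dist_q$ itself with $l=1$) versus the complementable case (where one first extends to a regular admissible map and then cites the existence of a respecting canonical neighborhood noted after Definition \ref{dfn:nbhd}); in either case conclusions (1)--(4) give exactly the slice structure needed. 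A minor point to dispatch cleanly is that $q\notin\Cl S$ guarantees $\dist_q$ is smooth (has the first variation formula) near $p$ and that the perturbed points stay in the domain $U$; this is routine. Once the canonical neighborhood with the right first coordinate is in hand, the contradiction via ($\ast$) is immediate.
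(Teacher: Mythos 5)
There is a genuine gap, and it sits exactly where your proposal tries to take a shortcut: the passage from $S$ to $\Cl S$ in step five. The hypothesis ($\ast$) is a dichotomy for $S$, not for $\Cl S$: knowing $p\in\Sigma\cap\Cl S$, where $\Sigma=f_{l+1}^{-1}(0)\cap K$, gives you no way to conclude $\Sigma\subset S$ (or $\Sigma\subset\Cl S$). The points of $S$ accumulating at $p$ need not lie on the slice $\Sigma$ at all --- they can approach $p$ from the region $\{f_{l+1}<0\}$, in which case ($\ast$) is perfectly consistent with $\Sigma\cap S=\emptyset$ while $p\in\Sigma\cap\Cl S$. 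Your parenthetical justification (``slices are open in their own dimension and $S$ is dense where it is relevant'') is unsupported: $\Sigma$ is an $l$-dimensional subset of $X$ (typically of positive codimension when the map is incomplementable at $p$ with $l<\dim X$), and nothing in ($\ast$) forces $S$ to be dense along it near $p$. So your argument establishes condition (E) of Definition \ref{dfn:ext} only at local minimum points that lie in $S$ itself; at points of $\Cl S\setminus S$ --- which is where the statement has real content --- the contradiction does not go through.

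This missing case is precisely what the paper's proof is built to handle, via Claim \ref{clm:s} and reverse induction on the number of coordinates $l$. In the induction step one argues as you do that minimality forces $\Sigma\not\subset S$, hence by ($\ast$) that $\Sigma\cap S=\emptyset$; but then, instead of stopping, one uses $p\in\overline{S\cap L}$ to pick a point $p_1\in S$ in $\mathring K(p,a)\setminus f_{l+1}^{-1}(0)$, where the extended map $(f,f_{l+1})$ is regular, restricts to the level set $L_1$ through $p_1$, and observes that $f_1$ would attain a minimum on $\overline{S\cap L_1}\cap U_1$ (its values there lie in $[f_1(p),f_1(p)+a)$ while on $L_1\cap\partial U_1$ they differ from $f_1(p)$ by $a$), contradicting the statement at level $l+1$. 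The induction bottoms out at $l=\dim X$, where a regular admissible map is a local homeomorphism, $f_{l+1}\equiv0$, and ($\ast$) makes $S$ open and closed in $U$. Your first two steps (reducing extremality of $\Cl S$ to the nonexistence of a noncritical local minimum of $\dist_q|_{\Cl S}$, and packaging $\dist_q$ as a one-coordinate regular admissible map) do correctly set up the case $l=1$ of this claim, and your handling of complementability via the remark after Definition \ref{dfn:nbhd} is fine; what is missing is the inductive mechanism (or some substitute) that deals with $S$ approaching $p$ off the zero slice, and without it the proof does not close.
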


\begin{rem}\label{rem:s}
Conversely, any extremal subset of $X$ satisfies the property ($\ast$).
This follows from the openness of regular admissible maps on extremal subsets mentioned above.
See \cite[\S2]{PP:ext} for the proof.
\end{rem}

In our applications, $S$ is obtained as a set of all points satisfying some condition.
The following are such examples.

\begin{ex}\label{ex:s}
\begin{enumerate}
\item Fix a topological cone $K$ with vertex $o$.
Then, the set of all points whose tangent cones are pointed homeomorphic to $(K,o)$ satisfies the property ($\ast$).
This follows from the stability theorem and the uniqueness of conical neighborhoods (\cite{Kw}).
\item The set of all good points in the sense of Definition \ref{dfn:pn} satisfies the property ($\ast$).
See \cite[2.2]{Per:col} for the proof.
\end{enumerate}
\end{ex}

The proof of Lemma \ref{lem:s} is completely the same as that of \cite[2.3]{Per:col}.
We prove the following claim by reverse induction on $l$.
The lemma immediately follows from the case $l=1$ of the claim.

\begin{clm}[{\cite[2.3]{Per:col}}]\label{clm:s}
Let $f=(f_1,\dots,f_l):X\to\mathbb R^l$ be regular on an open neighborhood $U$ of $p\in X$.
Set $L:=\bigcap_{i>1}f_i^{-1}(f_i(p))$.
Then the restriction of $f_1$ to $\overline{S\cap L}\cap U$ cannot attain its minimum.
\end{clm}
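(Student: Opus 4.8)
The plan is to prove Claim \ref{clm:s} by reverse induction on $l$, running from $l=n$ down to $l=1$; the only facts about $S$ that I would use are property ($\ast$) and the canonical neighborhood theorem \ref{thm:nbhd}. The point that makes a clean single argument possible is that, by Definition \ref{dfn:nbhd} and the remark following it, a canonical neighborhood respecting a given regular admissible map exists whether or not that map is complementable, so there is no need to split into cases. Suppose, for contradiction, that $f_1$ attains its minimum $m$ on $\overline{S\cap L}\cap U$ at a point $q$. Since $L$ is closed, $q\in L$, hence $f_i(q)=f_i(p)$ for every $i>1$ and $L=\bigcap_{i>1}f_i^{-1}(f_i(q))$; replacing $p$ by $q$, I may assume $f_1(p)=m$, $p\in\overline{S\cap L}$, and $f_1\ge m$ on $\overline{S\cap L}\cap U$. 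Applying the canonical neighborhood theorem to $f$ at $p$ (for $l=n$ one simply takes $f_{l+1}\equiv 0$), fix a canonical neighborhood $K=K(p,a)$ with respect to an admissible map $(f,f_{l+1})$ respecting $f$, with $a$ so small that $K\subset U$; thus $f_{l+1}\le 0$ on $K$ and $f_{l+1}(p)=0$, the map $f$ is regular on $K$, $(f,f_{l+1})$ is regular on $K\setminus f_{l+1}^{-1}(0)$, and $f$ maps $K\cap f_{l+1}^{-1}(0)$ bijectively onto the box $I^l(f(p),a)$.

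Next I would apply property ($\ast$) to $S$ with this $K$ and $(f,f_{l+1})$, obtaining two alternatives. In the first, $f_{l+1}^{-1}(0)\cap K\subset S$; writing $f=(f_1,f_*)$ with $f_*=(f_2,\dots,f_l)$, the box is $I^l(f(p),a)=[m-a,m+a]\times I^{l-1}(f_*(p),a)$, and the bijection above produces a point of $f_{l+1}^{-1}(0)\cap K\cap L\subset S\cap L\cap U$ at which $f_1=m-a<m$, contradicting minimality. In the second, $f_{l+1}^{-1}(0)\cap K\cap S=\emptyset$; if $l=n$ this already says that the open neighborhood $\mathring K$ of $p$ misses $S$, contradicting $p\in\overline{S\cap L}\subset\overline S$, which disposes of the base case. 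If $l<n$, pick $x_j\in S\cap L$ with $x_j\to p$; eventually $x_j\in\mathring K$, and since $x_j\in S$ lies off $f_{l+1}^{-1}(0)$ one has $c_j:=f_{l+1}(x_j)\in(-a,0)$ with $c_j\to 0$, while $f_1(x_j)\to m$ by continuity. Put $L_j:=L\cap f_{l+1}^{-1}(c_j)$ and let $y_j$ minimize $f_1$ on the compact set $\overline{S\cap L_j}\cap K$ (compact since $K$ is), so that $m\le f_1(y_j)\le f_1(x_j)\to m$.

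The heart of the argument is to show that $y_j$ lies in the \emph{open} set $W:=\mathring K\setminus f_{l+1}^{-1}(0)$ for all large $j$, so that the inductive hypothesis can be invoked there. Since $\overline{S\cap L_j}$ is contained in the level set $f_*^{-1}(f_*(p))\cap f_{l+1}^{-1}(c_j)$, on this set the distance $|f(\cdot)-f(p)|$ in the maximum norm equals $|f_1(\cdot)-m|$, which tends to $0$ along the minimizers, while $f_{l+1}\equiv c_j\in(-a,0)$; hence for large $j$ the point $y_j$ avoids every face of the canonical neighborhood, namely $\{|f-f(p)|=a\}$, $\{f_{l+1}=-a\}$, and $\{f_{l+1}=0\}$, so $y_j\in W$. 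On $W$ the $(l+1)$-coordinate admissible map $g:=(f,f_{l+1})$ is regular, and $\bigcap_{1<i\le l+1}g_i^{-1}(g_i(y_j))=L_j$; since $y_j$ minimizes $f_1=g_1$ on $\overline{S\cap L_j}\cap W$, this contradicts the claim for $l+1$. Therefore $f_1$ cannot attain its minimum on $\overline{S\cap L}\cap U$, which completes the induction, and taking $l=1$ proves Lemma \ref{lem:s}.

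I expect the one delicate point to be exactly this boundary-avoidance step — guaranteeing that the minimizer $y_j$ does not escape to the boundary of $K(p,a)$ — for which both the maximum-norm (box) shape of the canonical neighborhood and the fact that $\overline{S\cap L}$ lies inside a level set of $(f_2,\dots,f_l)$ are essential; the remaining ingredients are a routine combination of property ($\ast$), the canonical neighborhood theorem, and compactness of $K$.
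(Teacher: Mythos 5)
Your overall scheme coincides with the paper's: reverse induction on $l$, the dichotomy of property ($\ast$) applied to the zero level of the auxiliary function, conclusion (4) (bijection onto the box) to contradict minimality in the first alternative, and, in the second, forcing an attained minimum of $f_1$ on a smaller pinned level set inside the canonical neighborhood to contradict the inductive hypothesis. Your handling of the incomplementable situation — including the top-dimensional base case (which should be $l=\dim X$, not $l=n$; $n$ is the ambient dimension bound) and the boundary-avoidance for the minimizers $y_j$ — is correct, differing from the paper only in that you use a sequence $x_j\to p$ and minimizers $y_j$ where the paper uses a single auxiliary point $p_1$. The genuine gap is precisely the step you advertise as the simplification. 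Definition \ref{dfn:nbhd} waives incomplementability only in the \emph{terminology}; the existence statement following it produces a canonical neighborhood \emph{respecting} $f$, i.e.\ one taken with respect to some admissible map $(\bar f,\bar f_{k+1})$ whose first $l$ coordinates are $f$ but which may have $k>l$ coordinates (it is obtained by complementing $f$ until it is incomplementable and then invoking Theorem \ref{thm:nbhd}, which itself requires incomplementability). Nothing in the paper gives you a single function $f_{l+1}$ satisfying (1)--(4) for the $l$-coordinate map $f$ when $f$ is complementable at $p$, so the case split you dismiss has not actually been dissolved.

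This matters because your second alternative genuinely uses $k=l$: with $k>l$, the set $L_j=L\cap\bar f_{k+1}^{-1}(c_j)$ does not pin the coordinates $\bar f_{l+1},\dots,\bar f_k$, so the minimizer $y_j$ may sit on a face $|\bar f_i-\bar f_i(p)|=a$ with $l<i\le k$ — your boundary-avoidance argument, which rests on the identity $|\bar f(y_j)-\bar f(p)|=|f_1(y_j)-m|$ on $\overline{S\cap L_j}$, breaks down — and even for an interior $y_j$ the level set $\bigcap_{i=2}^{k+1}\bar f_i^{-1}(\bar f_i(y_j))$ required by the inductive hypothesis is strictly smaller than $L_j$, and $y_j$ need not lie in the closure of $S$ intersected with it. (Your first alternative survives the extended map, since the box point still lies in $L$.) The repair is exactly what the paper's terse ``we may further assume that $f$ is incomplementable at $p$'' encodes: work with the full map $(\bar f,\bar f_{k+1})$ and pin \emph{all} of its coordinates $2,\dots,k+1$ at the values of one point $p_1\in S\cap L\cap\bigl(\mathring K(p,a)\setminus\bar f_{k+1}^{-1}(0)\bigr)$, so that the pinned set is contained in $L$, the only free coordinate on it is $f_1$, and the minimizer is forced into the interior; no sequence is needed. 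With that modification your argument becomes the paper's proof of Claim \ref{clm:s}.
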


\begin{proof}
The base case $l=\dim X$ is clear since $f$ is a local open homeomorphism on $U$ and the property ($\ast$) implies that $S$ is open and closed in $U$ (note $f_{l+1}\equiv0$).
Suppose the claim does not hold for $l<\dim X$.
We may assume that the minimum is attained at $p$.
We may further assume that $f$ is incomplementable at $p$.
Then, by Theorem \ref{thm:nbhd}, there exist a nonpositive function $f_{l+1}$ with $f_{l+1}(p)=0$ and a canonical neighborhood $K(p,a)$ with respect to $(f,f_{l+1})$.
The minimality of $f_1$ at $p$ implies that $K(p,a)\cap f_{l+1}^{-1}(0)$ is not contained in $S$.
Thus the property ($\ast$) implies that $K(p,a)\cap f_{l+1}^{-1}(0)$ does not intersect $S$.
However, since $p\in\overline{S\cap L}$, there exists $p_1\in S\cap L\cap U_1$, where $U_1:=\mathring K(p,a)\setminus f_{l+1}^{-1}(0)$.
Note that $(f,f_{l+1})$ is regular on $U_1$.
Set $L_1:=\bigcap_{i=2}^{l+1}f_i^{-1}(f_i(p_1))\subset L$.
Then, we have
\[f_1(p)\le f_1(x)<f_1(p)+a\]
for any $x\in\overline{S\cap L_1}\cap U_1$, whereas $f_1(L_1\cap\partial U_1)\subset \{f_1(p)\pm a\}$.
Therefore, the restriction of $f_1$ to $\overline{S\cap L_1}\cap U_1$ can attain its minimum.
This contradicts the induction hypothesis.
\end{proof}

Now we define a good point of a primitive extremal subset.
Recall that $M$ denotes a variable collapsing Alexandrov space that is $\mu$-close to $X$ and that the hat symbol $\hat\ $ indicates natural lifts from $X$ to $M$.
Let $E$ be a primitive extremal subset of $X$, of dimension $m$.

\begin{dfn}\label{dfn:drn}
A point $p\in E$ is said to be \textit{good} in $E$ if it satisfies the following condition (DRN).
\begin{itemize}
\item[(DRN)] For any $R>0$, there exists $\rho=\rho(p,R)>0$ such that for any $q_i\in B(p,\rho)\cap E$ and sufficiently small $r_i>0$ ($i=1,2$), if $\mu$ is small enough, then $M$ contains a \textit{deformation retract neighborhood}, that is, a subset $U$ with $B(\hat p,\rho)\subset U\subset B(\hat p,R)$ that admits deformation retractions onto both $\bar B(\hat q_i,r_i)$.
\end{itemize}
\end{dfn}

In particular, both $\bar B(\hat q_i,r_i)$ are homotopy equivalent.
Although the above definition does not use regular maps as in Definition \ref{dfn:pn}, the essential ideas behind them are the same (the above condition makes sense even when $E=X$).
Taking two points $q_i$ is necessary to prove Lemma \ref{lem:rfe} and Proposition \ref{prop:shfe} below.
Actually it can be replaced with finitely many points in view of the following discussion.

The main theorem of this section is the following generalization of Theorem \ref{thm:ser}.

\begin{thm}\label{thm:sere}
The closure of the set of bad points in $E$, if nonempty, is a proper extremal subset of $E$.
In particular, every point of the main part of $E$ is good.
\end{thm}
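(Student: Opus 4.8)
The plan is to imitate Perelman's proof of Theorem~\ref{thm:ser}, i.e.\ the combination of Lemma~\ref{lem:s} and Claim~\ref{clm:s}, but carried out \emph{inside} $E$ rather than inside the ambient space, using the fact recalled at the beginning of Section~\ref{sec:pte} that the canonical neighborhood theorem~\ref{thm:nbhd} and the fibration theorem~\ref{thm:adm} remain valid for admissible maps restricted to an extremal subset. Write $S\subset E$ for the set of good points of $E$ in the sense of Definition~\ref{dfn:drn} and $B:=E\setminus S$ for the bad ones.

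\emph{Step 1: an $E$-version of Lemma~\ref{lem:s}.} I would first establish that if $S\subset E$ satisfies property~$(\ast)$ relative to $E$ — meaning the dichotomy of Lemma~\ref{lem:s} holds for every bottom face $f_{l+1}^{-1}(0)\cap K$ of a canonical neighborhood $K$ of a point of $E$, taken within $E$ with respect to an admissible map $X\to\mathbb R^l$ regular on $E$ (so necessarily $l\le m:=\dim E$) — then $\overline S$ is an extremal subset of $E$. The proof is word for word the reverse induction of Claim~\ref{clm:s} with $X$ replaced by $E$: when $l=m$ the restriction of the regular map to $E$ is a local homeomorphism and $(\ast)$ forces $S$ to be open and closed near the point in $E$; when $l<m$, incomplementability on $E$ and the canonical neighborhood theorem for extremal subsets produce a face that is entirely contained in $S$ or disjoint from it, and one descends to a nearby point of that face, lowering the dimension of the fixed level set, to contradict the induction hypothesis exactly as in \cite[2.3]{Per:col}.

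\emph{Step 2: density of $S$.} The points $p\in E$ at which some admissible map $f:X\to\mathbb R^m$ is regular form an open dense subset of $E$ by the facts listed at the start of Section~\ref{sec:pte}; I claim each such $p$ is good. Near $p$ the restriction $f|_E$ is a local homeomorphism, so by the canonical neighborhood theorem for extremal subsets (case $f_{m+1}\equiv0$) and liftability of regularity to $M$, the fibration theorem~\ref{thm:adm} gives, for small $a>0$, a trivialization $\hat f^{-1}(I^m(f(p),a))\cong(\text{fiber})\times I^m(f(p),a)$ near $\hat p$ whose fibers are conical. Intercalating $\hat f^{-1}(\mathring I^m(f(p),a'))$ between $B(\hat p,\rho)$ and $B(\hat p,R)$ with $\rho\ll R\ll a'\ll a$ and exploiting the conical structure of the fiber at each nearby regular lift, one produces a deformation retract neighborhood retracting onto $\bar B(\hat q_i,r_i)$ for the lift of any $q_i\in B(p,\rho)\cap E$ with $\mu\ll r_i$; hence $p$ satisfies (DRN). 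Identifying the metric balls $\bar B(\hat q_i,r_i)$ with the tubular pieces $\hat f^{-1}(\text{cube})$ about $\hat q_i$ is a routine but genuine point here.

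\emph{Step 3: $S$ satisfies $(\ast)$, and conclusion.} This is the analogue of Example~\ref{ex:s}(2), that is, of \cite[2.2]{Per:col}, and is where the real work lies. Given a canonical neighborhood $K$ of $p\in E$ taken within $E$ and a good point $q$ in its bottom face $f_{l+1}^{-1}(0)\cap K$, I would propagate a deformation retract neighborhood of $\hat q$ in $M$ along the conical fibred structure of $K$ (Theorem~\ref{thm:nbhd} and Remark~\ref{rem:nbhd}) lifted to $M$, together with the fact that (DRN), applied at interior points of the face, realizes the balls around the lifts of nearby points as common deformation retracts of a single domain; since the face is homeomorphic to a cube, hence connected, this forces every point of the face to be good. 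Because the dichotomy in $(\ast)$ is symmetric under replacing $S$ by its complement on each face, $B$ also satisfies $(\ast)$, so by Step~1 its closure $\overline B$ is an extremal subset of $E$, and it is proper by the density in Step~2; since $E$ is primitive a proper extremal subset has empty relative interior, whence $\mathring E\subset E\setminus\overline B=S$, which is the ``in particular'' statement. I expect Step~3 — the propagation of deformation retract neighborhoods through an incomplementable canonical neighborhood — to be the main obstacle, exactly as it is in \cite[2.2]{Per:col}; the remaining ingredients are bookkeeping once the extremal-subset versions of the standard fibration results are in place.
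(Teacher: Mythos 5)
Your skeleton is the same as the paper's: density of good points at the $m$-regular points of $E$ (the paper's Lemma~\ref{lem:sere1}), propagation of goodness across the bottom faces of canonical neighborhoods so that the bad set satisfies the dichotomy $(\ast)$ (the paper's Lemma~\ref{lem:sere2}), and then extremality via the canonical-neighborhood characterization, with properness from density and the main-part statement from primitivity. One structural remark: your Step~1 (a relative, intra-$E$ version of Lemma~\ref{lem:s} and Claim~\ref{clm:s}) is unnecessary. Since ``extremal subset of $E$'' means an extremal subset of $X$ contained in $E$, and since by Remark~\ref{rem:s} any bottom face $f_{l+1}^{-1}(0)\cap K$ meeting $E$ is contained in $E$ (so its center lies in $E$ and automatically $l\le m$), the ambient Lemma~\ref{lem:s}, already proved, applies verbatim to the set of bad points of $E$; rebuilding a relative canonical-neighborhood calculus inside $E$ only adds machinery.

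The genuine gap is Step~3, which you name but do not prove, and the one mechanism you do sketch does not work. Condition (DRN) at a good point $p$ yields deformation retract neighborhoods only at scales between $\rho(p,R)$ and $R$, centered near $\hat p$; it shows that the balls $\bar B(\hat q_i,r_i)$ over points $q_i$ near $p$ are homotopy equivalent, but it does \emph{not} show that such a nearby point $q$ itself satisfies (DRN) for every radius $R'$, i.e.\ goodness is not a priori an open or local condition on the bottom face, so ``the face is connected, hence every point is good'' cannot close the argument. What is actually required (the content of Lemma~\ref{lem:sere2} together with Claim~\ref{clm:sere}) is to transport deformation retract neighborhoods of \emph{all} sizes from $\hat p$ to an arbitrary $q$ in the face via translations respecting the fibred structure of the canonical neighborhood, and the delicate point is that the trivialization $\hat\Phi$ of the lifted neighborhood $\hat K$ is not a lift of the trivialization $\Phi$ of $K$: one must construct $\Phi$ and $\hat\Phi$ compatibly by gluing along chains of canonical neighborhoods covering the relevant level sets (reverse induction on $l$ when $f_{l+1}(y)\neq0$, Siebenmann's gluing and extension theorems \cite{Si}, and a two-point version because (DRN) involves two centers), precisely so that the image of a metric ball under the lifted translation $\hat\tau$ is squeezed between metric balls whose radii are chosen on $X$ alone, independently of $M$. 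This size control is what makes the transported set a deformation retract neighborhood in the sense of Definition~\ref{dfn:drn}; it is the bulk of the proof and is absent from your proposal, so as written the argument that the bad set satisfies $(\ast)$ is not established.
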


In view of Lemma \ref{lem:s}, it suffices to prove the following two lemmas.

\begin{lem}\label{lem:sere1}
Let $p\in E$ be a regular point of an admissible map $f:X\to\mathbb R^m$, where $m=\dim E$.
Then $p$ is a good point of $E$.
\end{lem}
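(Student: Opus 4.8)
The plan is to show directly that a regular point $p \in E$ of an admissible map $f : X \to \mathbb R^m$ (with $m = \dim E$) satisfies the condition (DRN). First I would recall that, since $f|_E$ is a local homeomorphism near $p$ and $f$ is regular on a neighborhood of $p$, for some fixed $a > 0$ the lift $\hat f$ is regular on $\hat f^{-1}(I^m(f(p), a))$ and, by the fibration theorem \ref{thm:adm} (together with its extremal-subset version), $\hat f : \hat f^{-1}(I^m(f(p), a)) \to I^m(f(p), a)$ is a trivial bundle whose fiber is a regular fiber $F$ over $E$; moreover the canonical neighborhood theorem \ref{thm:nbhd} gives this set a conical structure and hence a deformation retraction onto $F = \hat f^{-1}(f(p))$. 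Shrinking, for any $a' \le a$ the set $\hat f^{-1}(I^m(f(p), a'))$ deformation retracts onto $F$ as well, and onto $\hat f^{-1}(f(q))$ for any $q$ in its image, via the radial retraction of the cube toward $f(q)$ composed with the trivialization.

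Next, given $R > 0$, I would choose $\rho \ll R$ so that $B(p, \rho) \cap E$ maps into a cube $I^m(f(p), a')$ with $a' \ll a$ chosen small enough that, on the $M$ side, $\hat f^{-1}(\mathring I^m(f(p), a'))$ is sandwiched as $B(\hat p, \rho) \subset \hat f^{-1}(\mathring I^m(f(p), a')) \subset B(\hat p, R)$ once $\mu$ is small; this uses the $c(f)$-openness of $\hat f$ for the lower inclusion and the bi-Lipschitz control $\diam \hat f^{-1}(\text{small cube})$ for the upper one. Take $U := \hat f^{-1}(\mathring I^m(f(p), a'))$. For points $q_i \in B(p, \rho) \cap E$ and small $r_i > 0$, the balls $\bar B(\hat q_i, r_i)$ lie in $U$ once $r_i$ and $\mu$ are small. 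The deformation retraction of $U$ onto $\hat f^{-1}(f(q_i))$ described above, restricted appropriately, together with a further deformation retraction of $\hat f^{-1}(f(q_i))$ onto $\bar B(\hat q_i, r_i)$ coming from the conical structure of the fiber with vertex $\hat q_i$ (Remark \ref{rem:nbhd}), produces a deformation retraction of $U$ onto $\bar B(\hat q_i, r_i)$; doing this for $i = 1, 2$ gives the deformation retract neighborhood required by (DRN).

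The main obstacle I anticipate is the bookkeeping at the level of the two balls $\bar B(\hat q_i, r_i)$: one must verify that they sit inside the fiber $\hat f^{-1}(f(q_i))$ (or can be moved there by a small controlled perturbation) and that the conical deformation retraction of that fiber onto a small ball around its vertex $\hat q_i$ is compatible with the trivialization, so that the two retractions glue into a genuine deformation retraction of the single set $U$ rather than of two different sets. This is exactly the point where taking $q_i \in E$ (as opposed to arbitrary points of $X$) and using that all the maps and retractions involved preserve extremal subsets — because the gradient flows defining them do, as noted after Theorem \ref{thm:good} — keeps $\hat q_i$ inside the lift $\hat E$ and makes the construction coherent. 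The rest is routine chasing of $\mu \ll r_i \ll \rho \ll a' \ll a \ll R$.
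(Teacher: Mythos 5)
There is a genuine gap, and it is the central point of the lemma rather than bookkeeping. Your product set $U=\hat f^{-1}(\mathring I^m(f(p),a'))$ is not a small neighborhood of $\hat p$ when $E$ is a proper extremal subset: since $m=\dim E<\dim X$ in general, $f$ is only open (co-Lipschitz) near $p$, not bi-Lipschitz, so its fibers in $X$ have dimension $\dim X-m$ and the fibers of $\hat f$ in $M$ have dimension $n-m$, of a definite size independent of $a'$ and $\mu$. Hence the inclusion $\hat f^{-1}(\mathring I^m(f(p),a'))\subset B(\hat p,R)$ required by (DRN) fails; the ``bi-Lipschitz control of $\diam\hat f^{-1}(\text{small cube})$'' you invoke is available only in the case $E=X$, $m=\dim X$, which is exactly the remark following Definition \ref{dfn:pn}. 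The paper's proof repairs this by exploiting that $f$ is incomplementable at $p$ (any admissible map regular at a point of $E$ has at most $m=\dim E$ coordinates), so Theorem \ref{thm:nbhd} provides a complementary function $f_{m+1}$ and compact canonical neighborhoods $K(p,a)$ with respect to $(f,f_{m+1})$; these do shrink to $p$ as $a\to0$, so $K(p,a)\Subset B(p,R)$ can be arranged, and the deformation retract neighborhood is the lift $\hat K(p,a)$, not a full preimage of a cube. The function $f_{m+1}$, together with the observation that $q_i\in E$ forces $f_{m+1}(q_i)=0$ (otherwise $(f,f_{m+1})$ would be regular at a point of $E$ with $m+1>\dim E$ coordinates), is the essential ingredient missing from your plan, and it cannot be replaced by intersecting $\hat f^{-1}(\text{cube})$ with a ball, since $\hat f$ is then no longer proper onto its image and the fibration theorem does not apply.

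The second step also does not go through as stated: you retract the fiber $\hat f^{-1}(f(q_i))$ onto $\bar B(\hat q_i,r_i)$ ``by the conical structure of the fiber with vertex $\hat q_i$'' citing Remark \ref{rem:nbhd}, but that conical structure holds for the fiber of a canonical neighborhood in the space where it is constructed; in the collapsed space $M$ the fiber of $\hat f$ has no conical structure with vertex $\hat q_i$ (this is precisely what collapse destroys, and $\hat q_i$ is just one point of an $(n-m)$-dimensional fiber). The paper instead chooses $\tilde r_i$ so that $\dist_{q_i}$ is regular on $\bar B(q_i,\tilde r_i)\setminus\{q_i\}$ in $X$, lifts this regularity to $M$ by lower semicontinuity of the gradient, retracts $B(\hat q_i,\tilde r_i)$ onto $\bar B(\hat q_i,r_i)$ via the fibration theorem for $\dist_{\hat q_i}$, and chains this with the retraction of $\hat K(p,a)$ onto $\hat K(q_i,b_i)$ (coming from regularity of $\hat f$, resp.\ of $(\hat f,\hat f_{m+1})$ where $\hat f_{m+1}\le-b_i$) through the sandwich $B(\hat q_i,r_i)\Subset\hat K(q_i,b_i)\Subset B(\hat q_i,\tilde r_i)\Subset\hat K(p,a)$. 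A further caution: $\hat E$ need not be an extremal subset of $M$, so the argument ``gradient flows preserve extremal subsets'' cannot be used on the $M$ side to keep your retractions coherent.
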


\begin{lem}[cf.\ {\cite[2.2]{Per:col}}]\label{lem:sere2}
Let $K$ be a canonical neighborhood with respect to $(f,f_{l+1}):X\to\mathbb R^{l+1}$ and let $p, q\in\mathring K\cap f_{l+1}^{-1}(0)$.
If $p$ is a good point of $E$, then $q$ is also a good point of $E$.
\end{lem}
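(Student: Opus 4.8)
The plan is to adapt \cite[2.2]{Per:col}: transport the deformation-retract neighborhoods of $\hat p$ onto those of $\hat q$ by a self-homeomorphism of $M$ built from the product structure of $K$ over its base cube.

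First I would pin down the local picture. Write $K=K(p_0,a)$ with center $p_0$ and radius $a$, and put $L:=f_{l+1}^{-1}(0)\cap K$. Since $p\in E\cap L\neq\emptyset$, the property ($\ast$) enjoyed by the extremal subset $E$ (Remark \ref{rem:s}), applied to the admissible map $(f,f_{l+1})$ and the canonical neighborhood $K$, forces $L\subset E$; and by Theorem \ref{thm:nbhd}(2) and (4), $f$ restricts to a homeomorphism of $L$ onto the cube $I^l(f(p_0),a)$, carrying $p$ and $q$ (and, for $\mu$ small, $\hat p$ and $\hat q$ under $\hat f$) to interior points. Next, for $\mu$ small the admissibility and regularity assertions of Theorem \ref{thm:nbhd} are liftable (Section \ref{sec:ser}), so $\hat f$ is regular on $\hat K$ and, as in Remark \ref{rem:nbhd}, the fibration theorem \ref{thm:adm} yields a homeomorphism $\hat K\cong I^l(f(p_0),a)\times\hat C$ respecting $\hat f$ and $\hat f_{l+1}$, where $\hat C=\hat f^{-1}(f(p_0))\cap\hat K$.

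I would then fix a homeomorphism $\beta$ of the cube, isotopic to the identity rel boundary and supported in its interior, with $\beta(\hat f(\hat p))=\hat f(\hat q)$, lift it (as $\beta\times\id_{\hat C}$) to a bundle automorphism of $\hat K$ sending $\hat p$ to $\hat q$, and extend by the identity to a self-homeomorphism $B$ of the compact space $M$, isotopic to $\id_M$, preserving $\hat f_{l+1}$-levels and fixing every fiber coordinate. Given $R>0$, the uniform continuity of $B$ and $B^{-1}$ yields $R'>0$ with $B(B(\hat p,R'))\subset B(\hat q,R)$; applying the goodness of $p$ in $E$ at scale $R'$ to suitable data near $\hat p$ extracted from the $\hat q_i$ and $r_i$ via $B$, I obtain for $\mu$ small a deformation-retract neighborhood $V$ of $\hat p$, and $B(V)$ is then the deformation-retract neighborhood of $\hat q$ demanded by condition (DRN). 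This would show $q$ is good in $E$.

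The hard part will be this last step. Condition (DRN) is phrased in terms of honest metric balls $\bar B(\hat q_i,r_i)$, whereas $B$ a priori only produces their homeomorphic images, so one must arrange — using the uniform distortion control of $B$ and $B^{-1}$ inside $\hat K$, the product structure, and the local conical structure of $M$ at the $\hat q_i$ — that $B(V)$ genuinely deformation retracts onto the prescribed metric balls after a harmless adjustment of the radii, and that the $B$-preimages of the $\hat q_i$ are themselves lifts of points of $E$ near $p$. This matching is most delicate transverse to $L$, i.e.\ when $l<\dim E$; when $l=\dim E$ the chart $L$ already exhausts $E$ near $q$, so $q$ is a full-dimensional regular point of $E$ and Lemma \ref{lem:sere1} applies outright, and it is exactly in the case $l<\dim E$ that one must exploit the compatibility of the product structure of $\hat K$ with $\hat f_{l+1}$ — hence with the lift of $E$ near the top face — to ensure that $B$ moves the $\hat q_i$ only in the base direction and carries the retractions along with them.
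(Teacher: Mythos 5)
Your opening reductions are fine (property $(\ast)$ does give $f_{l+1}^{-1}(0)\cap K\subset E$, hence $q\in E$, and when $l=\dim E$ one can indeed quote Lemma \ref{lem:sere1} directly), and your overall strategy---transporting the (DRN) data from $p$ to $q$ by a translation built from the product structure of the canonical neighborhood---is the same as the paper's. But the two points you defer to ``the hard part'' are precisely where the argument lives, and as stated they do not go through. First, condition (DRN) at $p$ takes as input points $x_i\in B(p,\rho)\cap E$ of the \emph{fixed} space $X$, chosen before $M$, together with metric balls around their lifts $\hat x_i$. Your input data $B^{-1}(\hat q_i)$ are points of the variable space $M$, and a topological trivialization of $\hat K$ gives no reason whatsoever for them to be lifts of points of $E$; the paper stresses that $\hat\Phi$ and $\hat\tau$ are not lifts of $\Phi$ and $\tau$ in any sense. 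The transport must therefore be done downstairs in $X$ first, with $\tau$ chosen to respect $E$ (relative fibration theorem for extremal subsets), producing honest points $x_i=\tau^{-1}(q_i)\in B(p,\rho(p,\cdot))\cap E$, and only then lifted.

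Second, and more seriously, the metric control you invoke (``uniform continuity/distortion control of $B$ and $B^{-1}$'') is not available: the trivialization of the collapsed bundle $\hat K\to I^l$ is merely topological, its modulus of continuity depends on $M$ and degenerates as $\mu\to0$, whereas (DRN) requires the radii to depend only on $X$ and the retraction targets to be genuine metric balls $\bar B(\hat q_i,r_i)$ around lifts. The image of $B(\hat p,R')$ under such a $B$ need not contain any definite-size ball around $\hat q$, and $\hat\tau(\bar B(\hat x_i,r_i))$ need not be comparable to balls around $\hat y_i$. Establishing the sandwich $B(\hat y_i,s_i)\Subset\hat\tau(B(\hat x_i,r_i))\Subset B(\hat y_i,\tilde s_i)$ with radii chosen in $X$ is the technical heart of the paper's proof (Claim \ref{clm:sere}): it requires choosing $\Phi$ and $\hat\Phi$ by gluing product structures along a chain of canonical neighborhoods, a reverse induction on $l$ to handle auxiliary points with $f_{l+1}(q_i)\neq0$ (these need not lie near the top face, since $E\cap K$ is in general much larger than $f_{l+1}^{-1}(0)\cap K$, contrary to your picture), and Siebenmann's results to match the pieces. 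Note also that in $M$ the trivialization can only be made to respect $\hat f_{l+1}$ below some negative level $-b$ (the top face does not survive the collapse), so a bundle automorphism ``preserving $\hat f_{l+1}$-levels and fixing every fiber coordinate'' cannot be arranged where the $\hat q_i$ live; and your extension of $\beta\times\id_{\hat C}$ by the identity is discontinuous along the bottom face $\hat f_{l+1}^{-1}(-a)\cap\hat K$, though that defect is minor compared with the missing quantitative control.
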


Indeed, the first lemma guarantees that the closure of the set of bad points in $E$ is a proper subset of $E$ since the set of such regular points are open and dense in $E$ as mentioned above.
On the other hand, the second lemma means that the set of bad points in $E$ satisfies the property ($\ast$) (see also Remark \ref{rem:s}).
Therefore, Theorem \ref{thm:sere} follows from Lemma \ref{lem:s}.

\begin{proof}[Proof of Lemma \ref{lem:sere1}]
As before, the notation $U\Subset V$ means that the closure of $U$ is contained in the interior of $V$.
Note that $f$ is incomplementable at $p$ since $m=\dim E$.
By Theorem \ref{thm:nbhd}, there exists a function $f_{m+1}$ such that $(f,f_{m+1})$ defines canonical neighborhoods, which we will denote by $K(\cdot,\cdot)$.
For given $R>0$, find $a>0$ such that $K(p,a)\Subset B(p,R)$ and define $\rho=\rho(p,R)>0$ so that $B(p,\rho)\Subset K(p,a)$.
Let $q_i\in B(p,\rho)\cap E$ ($i=1,2$).
Note that $q_i\in K(p,a)\cap f_{m+1}^{-1}(0)$ since $m=\dim E$.
Take $\tilde r_i>0$ such that $\dist_{q_i}$ is regular on $\bar B(q_i,\tilde r_i)\setminus\{q_i\}$ and $B(q_i,\tilde r_i)\Subset B(p,\rho)$.
Let $b_i>r_i>0$ be so small that $B(q_i,r_i)\Subset K(q_i,b_i)\Subset B(q_i,\tilde r_i)$.
Then, if $\mu$ is small enough, the lift $\hat K(p,a)$ is a deformation retract neighborhood for $\bar B(\hat q_i,r_i)$ such that $B(\hat p,\rho)\subset\hat K(p,a)\subset B(\hat p,R)$.
Indeed, we may assume that $\hat f$ is regular on $\hat K(p,a)$ and in addition that $(\hat f,\hat f_{m+1})$ is regular if $\hat f_{m+1}\le-b_i$.
Then it easily follows from the fibration theorem \ref{thm:adm} that $\hat K(q_i,b_i)$ is a deformation retract of $\hat K(p,a)$.
Similarly $\bar B(\hat q_i,r_i)$ is a deformation retract of $B(\hat q_i,\tilde r_i)$.
Since $B(\hat q_i,r_i)\Subset\hat K(q_i,b_i)\Subset B(\hat q_i,\tilde r_i)\Subset\hat K(p,a)$ if $\mu$ is small enough, $\bar B(\hat q_i,r_i)$ is a deformation retract of $\hat K(p,a)$.
\end{proof}

\begin{proof}[Proof of Lemma \ref{lem:sere2}]
The proof is similar to that of \cite[2.2]{Per:col}.
First of all, notice that $p\in E$ implies $q\in E$ by Remark \ref{rem:s}.

Let us first define $\rho(q,R)$ for given $R>0$.
We denote by $K(\cdot,\cdot)$ canonical neighborhoods with respect to $(f,f_{l+1})$.
Let $a>0$ be so small that $K(q,a)\Subset K\cap B(q,R)$ and $K(p,a)\Subset K$.
Find $r>0$ such that $B(p,r)\Subset K(p,a)$ and take $\rho(p,r)$ by the condition (DRN) for $p$.
Choose $b>0$ so that $K(p,b)\Subset B(p,\rho(p,r))$.
Finally, define $\rho(q,R)>0$ so that $B(q,\rho(q,R))\Subset K(q,b)$.

Recall that the fibration theorem \ref{thm:adm} yields a homeomorphism $\Phi:I^l\times F\to K$ respecting $(f,f_{l+1})$, where $I^l$ is some $l$-dimensional closed cube in $\mathbb R^l$ and $F$ is a fiber of $f|_K$ (see Remark \ref{rem:nbhd}).
Define a translation $\tau:K(p,a)\to K(q,a)$ respecting $f_{l+1}$ by $\tau:=\Phi\circ(T_v,\id_F)\circ\Phi^{-1}$, where $T_v$ denotes the translation on $\mathbb R^l$ by a vector $v=f(q)-f(p)$.
Similarly, if $\mu\ll b$ is small enough, there exists a homeomorphism $\hat\Phi:I^l\times\hat F\to\hat K$ respecting $\hat f$ and in addition respecting $\hat f_{l+1}$ if $\hat f_{l+1}\le-b$, where $\hat F$ is a fiber of $\hat f|_{\hat K}$.
Define a translation $\hat\tau:\hat K(p,a)\to\hat K(q,a)$ respecting $\hat f_{l+1}$ if $\hat f_{l+1}\le-b$ in the same way as above.
Note that $\hat\Phi$ and $\hat\tau$ are not natural lifts of $\Phi$ and $\tau$ in any sense.

Let $y_i\in B(q,\rho(q,R))\cap E$ ($i=1,2$).
Take $\tilde s_i>0$ such that $\dist_{y_i}$ is regular on $\bar B(y_i,\tilde s_i)\setminus\{y_i\}$ and $B(y_i,\tilde s_i)\Subset B(q,\rho(q,r))$.
Put $x_i:=\tau^{-1}(y_i)$.
Then $x_i\in B(p,\rho(p,r))\cap E$ since $\tau$ sends $K(p,b)$ to $K(q,b)$ and in addition it can be chosen to respects $E$.
The latter follows from the relative fibration theorem for extremal subsets (see \cite[\S9]{K:stab}).
Take sufficiently small $r_i>0$ for each $x_i$ by the condition (DRN) for $p$ such that $B(x_i,r_i)\Subset\tau^{-1}(B(y_i,\tilde s_i))$ and let $s_i>0$ be so small that $B(y_i,s_i)\Subset\tau(B(x_i,r_i))$.
Then, if $\mu$ is small enough, there exists a deformation retract neighborhood $U$ for $\bar B(\hat x_i,r_i)$ with $B(\hat p,\rho(p,r))\subset U\subset B(\hat p,r)$.

Let us prove that $\hat\tau(U)$ is a deformation retract neighborhood for $\bar B(\hat y_i,s_i)$ such that $B(\hat q,\rho(q,R))\subset\hat\tau(U)\subset B(\hat q,R)$.
Indeed the inclusions hold since $\hat\tau$ sends $\hat K(p,a)$ (resp.\ $\hat K(p,b)$) to $\hat K(q,a)$ (resp.\ $\hat K(q,b)$).
Let us show that $\bar B(\hat y_i,s_i)$ is a deformation retract of $\hat\tau(U)$.
Recall that $\hat\tau(\bar B(\hat x_i,r_i))$ is a deformation retract of $\hat\tau(U)$.
On the other hand, the fibration theorem implies that $\bar B(\hat y_i,s_i)$ is a deformation retract of $B(\hat y_i,\tilde s_i)$.
Furthermore, we may assume $B(\hat y_i,s_i)\Subset\hat\tau(B(\hat x_i,r_i))\Subset B(\hat y_i,\tilde s_i)\Subset\tau(U)$ by the following claim (this is not trivial since $\hat\tau$ is not a lift of $\tau$).
This completes the proof.
\end{proof}

\begin{clm}\label{clm:sere}
In the above proof, for given $y_i\in\mathring K(q,a)$ and $\tilde s_i>0$ ($i=1,2$), one can choose homeomorphisms $\Phi:I^l\times F\to K$ and $\hat\Phi:I^l\times\hat F\to\hat K$ and radii $r_i>s_i>0$ so that the translations $\tau$ and $\hat\tau$ satisfy
\begin{gather*}
B(y_i,s_i)\Subset\tau(B(x_i,r_i))\Subset B(y_i,\tilde s_i),\\
B(\hat y_i,s_i)\Subset\hat\tau(B(\hat x_i,r_i))\Subset B(\hat y_i,\tilde s_i),
\end{gather*}
where $x_i=\tau^{-1}(y_i)$.
Note that the choices of $r_i$ and $s_i$ depend only on $X$ but not on $M$.
\end{clm}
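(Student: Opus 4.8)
The plan is to exploit the fact that the translations $\tau$ and $\hat\tau$ are built from honest translations of a cube $I^l$ under the coordinate charts $\Phi$ and $\hat\Phi$, so they are uniformly bi-Lipschitz with a constant depending only on the bi-Lipschitz constants of these charts — and, crucially, the charts can be chosen with bi-Lipschitz constant depending only on $X$, not on $M$. First I would fix the charts. By the fibration theorem \ref{thm:adm} applied to the regular admissible maps $(f,f_{l+1})$ on $K$ and $(\hat f,\hat f_{l+1})$ on $\hat K$ (the latter regular in the region $\hat f_{l+1}\le-b$, and $\hat f$ regular everywhere on $\hat K$), one obtains trivializations $\Phi\colon I^l\times F\to K$ and $\hat\Phi\colon I^l\times\hat F\to\hat K$. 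These trivializations can be taken to be bi-Lipschitz with a constant $L$ depending only on the admissible map data — i.e.\ only on $X$ — by the openness and properness estimates recorded after Definition \ref{dfn:adm} (the $c$-openness of regular admissible maps, together with their Lipschitz continuity from the distance-function representation). Since $T_v$ is an isometry of $\mathbb R^l$, the translations $\tau=\Phi\circ(T_v,\id)\circ\Phi^{-1}$ and $\hat\tau=\hat\Phi\circ(T_v,\id)\circ\hat\Phi^{-1}$ are then bi-Lipschitz with a constant $L^2$ depending only on $X$.

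Next I would pin down the radii. Given $y_i\in\mathring K(q,a)$ and $\tilde s_i>0$, set $x_i:=\tau^{-1}(y_i)$; then $x_i$ lies in $\mathring K(p,a)$ with a definite distance to its boundary, again controlled only by $X$. Because $\tau$ is $L^2$-bi-Lipschitz, we have
\[
B\bigl(y_i,\ L^{-2}t\bigr)\subset\tau\bigl(B(x_i,t)\bigr)\subset B\bigl(y_i,\ L^2 t\bigr)
\]
for every $t>0$ small enough that $B(x_i,t)$ stays inside the domain of $\tau$. So it suffices to choose $r_i>0$ with $L^2 r_i<\tilde s_i$ (and small enough that $B(x_i,r_i)\Subset\tau^{-1}(B(y_i,\tilde s_i))$, which is automatic from the previous inclusion) and then $s_i>0$ with $s_i<L^{-2}r_i$; this forces $B(y_i,s_i)\Subset\tau(B(x_i,r_i))\Subset B(y_i,\tilde s_i)$. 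All of these constraints involve only $L$, $\tilde s_i$, and the geometry of $K$, hence only $X$.

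Finally, the hatted inclusions come for free once $\mu$ is small. Since $\hat\tau$ is also $L^2$-bi-Lipschitz with the \emph{same} $L$ (the bi-Lipschitz constant of the charts was taken independent of $M$), the same computation gives
\[
B\bigl(\hat y_i,\ L^{-2}t\bigr)\subset\hat\tau\bigl(B(\hat x_i,t)\bigr)\subset B\bigl(\hat y_i,\ L^2 t\bigr),
\]
so the already-chosen $r_i$ and $s_i$ satisfy $B(\hat y_i,s_i)\Subset\hat\tau(B(\hat x_i,r_i))\Subset B(\hat y_i,\tilde s_i)$ as well, provided $\mu\ll b$ guarantees that the relevant balls stay inside the domain of $\hat\tau$ (here one uses that $\hat x_i,\hat y_i$ are the natural lifts of $x_i,y_i$ and that distances are nearly preserved under the $\mu$-approximation). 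The one point requiring care — and the main obstacle — is verifying that the bi-Lipschitz constant of the trivialization $\hat\Phi$ of $\hat K$ can genuinely be bounded independently of $M$: this is where one must invoke that the admissible map data for $\hat f$ is the natural lift of that for $f$, so the openness constant $c(\hat f)$ and the Lipschitz bounds are inherited from $f$ up to an error going to $0$ with $\mu$, uniformly in $M$. Once this uniformity is in hand, the choices of $r_i$ and $s_i$ made on $X$ work verbatim on $M$, which is exactly the assertion of the claim.
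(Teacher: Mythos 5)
There is a genuine gap, and it is located exactly at the point you flag as ``the main obstacle.'' Your whole argument rests on the assertion that the trivializations $\Phi\colon I^l\times F\to K$ and $\hat\Phi\colon I^l\times\hat F\to\hat K$ furnished by the fibration theorem \ref{thm:adm} can be taken bi-Lipschitz with a constant depending only on the admissible map data, hence only on $X$. The fibration theorem does not provide this: its trivializations are produced by Siebenmann's deformation theory and are merely topological, with no metric control whatsoever. The $c$-openness and Lipschitz continuity recorded after Definition \ref{dfn:adm} control the map $\bar f$ itself, not the fiberwise identifications that constitute $\Phi$; conjugating the Euclidean translation $T_v$ by an uncontrolled homeomorphism gives an uncontrolled homeomorphism, so neither $\tau$ nor $\hat\tau$ is known to be bi-Lipschitz at all, let alone with constants independent of $M$. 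For $\hat\Phi$ the situation is worse still: $\hat F$ is a collapsed fiber living in the variable space $M$, and ``inheriting'' openness constants of $\hat f$ from $f$ says nothing about the metric behaviour of a trivialization of $\hat K$ (recall the paper explicitly warns that $\hat\Phi$ and $\hat\tau$ are not lifts of $\Phi$ and $\tau$ in any sense). So the step ``$\hat\tau$ is $L^2$-bi-Lipschitz with the same $L$'' cannot be justified, and with it the transfer of the radii $r_i,s_i$ from $X$ to $M$ collapses. A secondary issue: when $f_{l+1}(y_i)\neq 0$ the translation respecting $f_{l+1}$ cannot be taken at face value on $K$; this is why the paper needs a reverse induction on $l$ and a chain of canonical neighborhoods, which your argument bypasses entirely.

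The paper's proof is designed precisely to avoid any metric control of $\Phi$ and $\hat\Phi$. It builds $\Phi$ (and, in parallel, $\hat\Phi$) by gluing trivializations of a chain of nested canonical neighborhoods $K_i$ respecting admissible maps whose coordinates extend $(f,f_{l+1})$, using Siebenmann's extension results; because $\tau$ and $\hat\tau$ respect these coordinate functions, they carry canonical neighborhoods of $x$ to canonical neighborhoods of $y$ of the same coordinate size. The ball estimates are then obtained by sandwiching metric balls between canonical neighborhoods, $B(x,r)\Supset K_1(x,b_1)$, $K_N(y,b_N)\Supset B(y,s)$, and so on --- all inclusions between sets defined by liftable admissible functions and metric balls on $X$, which persist for their lifts in $M$ once $\mu$ is small. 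That is how the radii come to depend only on $X$: not through bi-Lipschitz bounds on the trivializations, which are unavailable, but through the nesting of canonical neighborhoods. If you want to salvage your write-up, you would need to replace the bi-Lipschitz mechanism by this kind of coordinate-respecting sandwich argument.
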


\begin{proof}
The idea of the proof is actually simple.
If $f_{l+1}(y_i)=0$, then one can estimate the sizes of the balls by those of canonical neighborhoods with respect to $(f,f_{l+1})$, as we have done for $p$ and $q$ in the proof of Lemma \ref{lem:sere2}.
Since the translations do not change the sizes of canonical neighborhoods, this gives the desired conclusion.
If $f_{l+1}(y_i)\neq0$, then one can use reverse induction on $l$ to reduce the problem to the former case.
However, in the latter case, we need a more complicated argument.

For simplicity, we assume $v=f(q)-f(p)=(v_1,0,\dots,0)$ and $v_1>0$ (the general case is similar).
Let us first explain how to construct such homeomorphisms when only one point $y\in\mathring K(q,a)$ and $\tilde s>0$ are given.
The two-point case will be dealt with later.

We first define $x=\tau^{-1}(y)\in\mathring K(p,a)$ by reverse induction on $l$ as follows.
\begin{enumerate}
\item If $f_{l+1}(y)=0$, then define $x$ as the unique point such that $f(x)=f(y)-v$ and $f_{l+1}(x)=0$.
\item If $f_{l+1}(y)\neq0$, then cover the level set $f_{l+1}^{-1}(f_{l+1}(y))\cap K$ by finitely many canonical neighborhoods respecting $(f,f_{l+1})$, using Theorem \ref{thm:nbhd}.
Then one can define $x$ by reverse induction on $l$.
More specifically, the induction shows that there exists a sequence of canonical neighborhoods $K_i$ ($1\le i \le N$) with respect to admissible maps $(f^i,f^i_{l_i+1}):X\to\mathbb R^{l_i+1}$ ($l_i>l$) whose first $(l+1)$-coordinates coincide with $(f,f_{l+1})$ and points $x_i\in\mathring K_i\cap(f^i_{l_i+1})^{-1}(0)$ such that
\begin{align*}
&x_N=y, & &\\
&x_i=\tau_{i+1}^{-1}(x_{i+1}), & &f_1(x_i)<f_1(x_{i+1})\\
&x=\tau_1^{-1}(x_1), & &f_1(x)=f_1(y)-v_1,
\end{align*}
where $\tau_{i+1}^{-1}(x_{i+1})$ is naturally defined on $K_{i+1}$ as in the case (1).
Note that $x_i$ is also contained in $\mathring K_{i+1}\cap(f^{i+1}_{l_{i+1}+1})^{-1}(0)$.
\end{enumerate}

In the case (1), the claim follows by an argument using canonical neighborhoods of $x$ and $y$ with respect to $(f,f_{l+1})$, as in the proof of Lemma \ref{lem:sere2}.

Let us consider the case (2).
We first construct the desired homeomorphism $\Phi$.
The construction of $\hat\Phi$ is carried out in parallel with it as we will see later.
Let $\Phi_i:I^{l_i}\times F_i\to K_i$ be a homeomorphism respecting $(f^i,f^i_{l_i+1})$, where $I^{l_i}$ is some $l_i$-dimensional cube and $F_i$ is a fiber of $f^i|_{K_i}$.
We first glue them along the fibers of $f_1$ to construct part of $\Phi$.
Let $K_i(\cdot,\cdot)$ denote canonical neighborhoods with respect to $(f^i,f^i_{l_i+1})$.
For given $\tilde s>0$, find $a_N>0$ such that $K_N(y,a_N)\Subset B(y,\tilde s)\cap K(q,a)$.
Then choose smaller and smaller $a_i>0$ ($1\le i\le N-1$) so that
\[K_i(x_i,a_i)\Subset K_{i+1}(x_i,a_{i+1}).\]
In particular, note that $K_1(x,a_1)\Subset K(p,a)$ since every $K_i$ respects $(f,f_{l+1})$.
Let $I_i^{l_i-1}(a_i)$ denote the closed $a_i$-neighborhood of $(f^i_2(x_i),\dots f^i_{l_i}(x_i))$ in $\mathbb R^{l_i-1}$ with respect to the maximum norm and let $F_i(a_i)$ denote the superlevel set $\{f_{l_i+1}\ge-a_i\}$ in $F_i$.
Then, since $K_1(x_1,a_1)\Subset K_2(x_1,a_2)$, we can define an embedding $\Psi_{21}:I_1^{l_1-1}(a_1)\times F_1(a_1)\to I_2^{l_2-1}(a_2)\times F_2(a_2)$ respecting the first $l$-coordinates by
\[\Psi_{21}:=\Phi_2^{-1}\circ\Phi_1|_{\{f_1(x_1)\}\times I_1^{l_1-1}(a_1)\times F_1(a_1)}.\]
Set $I_2:=[f_1(x_{1}),f_1(x_2)]$ and define an embedding $\Phi_2':I_2\times I_1^{l_1-1}(a_1)\times F_1(a_1)\to K$ respecting $(f,f_{l+1})$ by
\[\Phi_2':=\Phi_2\circ(\pr_1,\Psi_{21}\circ\pr_{>1}),\]
where  $\pr_1$ and $\pr_{>1}$ denote the projections onto the first and other factors, respectively.
Note that $\Phi_2'$ coincides with $\Phi_1$ on $\{f_1(x_1)\}\times I_1^{l_1-1}(a_1)\times F_1(a_1)$.
We can repeat such a procedure to obtain embeddings $\Phi_i':I_i\times I_1^{l_i-1}(a_1)\times F_1(a_1)\to K$ respecting $(f,f_{l+1})$, where $\Phi_1'=\Phi_1$ and
\begin{equation*}
I_i=
\begin{cases}
\hfil [f_1(x)-a_1,f_1(x_1)] & \hfil i=1 \\
\hfil [f_1(x_{i-1}),f_1(x_i)] & 2\le i\le N-1 \\
[f_1(x_{N-1}),f_1(y)+a_N] & \hfil i=N,
\end{cases}
\end{equation*}
which coincide with each other on the boundaries of $I_i$.
Finally, we set $\Phi:=\Phi_i'$ on each $I_i\times I_1^{l_1-1}(a_1)\times F_1(a_1)$.

We may assume that $\Phi$ is actually defined on a slightly larger domain than the above one.
Then, using Siebenmann's result \cite[6.9]{Si} (see also \cite[\S1 Complement to Theorem A]{Per:alex}), one can extend $\Phi$ to a homeomorphism $I^l\times F\to K$ respecting $(f,f_{l+1})$.

Let $\tau:K(p,a)\to K(q,a)$ be the translation with respect to $\Phi$.
We estimate radii $r>s>0$ such that $B(y,s)\Subset\tau(B(x,r))\Subset B(y,\tilde s)$ in terms of sizes of canonical neighborhoods.
Let $r>0$ be such that $B(x,r)\Subset K_1(x,a_1)$.
Then the choices of $a_i$ imply that $\tau(B(x,r))\Subset\tau(K_1(x,a_1))\subset K_N(y,a_N)\Subset B(y,\tilde s)$.
Similarly we estimate $s>0$ such that $B(y,s)\Subset\tau(B(x,r))$ as follows.
Find $b_1>0$ such that $K_1(x,b_1)\Subset B(x,r)$ and choose smaller and smaller $b_i>0$ ($2\le i\le N$) so that
\[K_i(x_{i-1},b_i)\Subset K_{i-1}(x_{i-1},b_{i-1}).\]
Finally let $s>0$ be such that $B(y,s)\Subset K_N(y,b_N)$.
Then it satisfies the desired inclusion.
Indeed, since $K_1(x_1,b_1)\Supset K_2(x_1,b_2)$, we have
\[\Psi_{21}(I_1^{l_1-1}(b_1)\times F_1(b_1))\supset I_2^{l_2-1}(b_2)\times F_2(b_2),\]
and hence
\[\Phi_2'(\{f_1(x_2)\}\times I_1^{l_i-1}(b_1)\times F_1(b_1))\supset f_1^{-1}(f_1(x_2))\cap K_2(x_2,b_2).\]
We can repeat such an argument to show
\[\Phi_N'(I(f_1(y),b_1)\times I_1^{l_i-1}(b_1)\times F_1(b_1))\supset K_N(y,b_N).\]
This implies $\tau(B(x,r))\Supset\tau(K_1(x,b_1))\supset K_N(y,b_N)\Supset B(y,s)$.

Now, suppose $\mu\ll b_N$ is small enough and lift all the canonical neighborhoods discussed above to $M$.
Then, one can construct the desired homeomorphism $\hat\Phi$ in parallel with the construction of $\Phi$ as follows.
Let $\hat\Phi_i:I^{l_i}\times\hat F_i\to\hat K_i$ be a homeomorphism respecting $\hat f^i$, where $\hat F_i$ is a fiber of $\hat f^i|_{\hat K_i}$.
In addition, we may assume that $\hat\Phi_i$ respects $\hat f^i_{l_i+1}$ if $\hat f^i_{l_i+1}\le -b_N$.
Since the inclusions of canonical neighborhoods in $X$ hold for their lifts in $M$, one can glue $\hat\Phi_i$ and extend it to obtain $\hat\Phi$ in the same way as above.
Furthermore, the translation $\hat\tau:\hat K(p,a)\to\hat K(q,a)$ with respect to $\hat\Phi$ satisfies
\[B(\hat y,s)\Subset\hat\tau(B(\hat x,r))\Subset B(\hat y,\tilde s)\]
since the above estimates using canonical neighborhoods still hold in $M$.
This completes the proof of the one-point case.

Finally, we outline the proof of the two-point case.
First recall the construction of $x=\tau^{-1}(y)$ at the beginning of the above proof.
We define the locus $T$ of this translation as follows: (1) if $f_{l+1}(y)=0$, then define it as the inverse image of the shortest path $f(x)f(y)$ under $f|_{f_{l+1}^{-1}(0)\cap K}$; (2) if $f_{l+1}(y)\neq 0$, then define it by reverse induction on $l$ as before.
Suppose two points $y_i\in\mathring K(q,a)$ ($i=1,2$) are given (we may assume $f_{l+1}(y_i)\neq 0$).
Let $T_i$ denote the locus through $y_i$.
We may assume that $T_1$ is defined beyond $x_1$ and $y_1$ to the boundary of $K$.
If $y_2\in T_1$, one can construct the desired homeomorphism in the same way as the one-point case.
On the other hand, if $y_2\notin T_1$, one can define $T_2$ so that $T_1\cap T_2\neq 0$.
Then we can construct homeomorphisms on disjoint neighborhoods of (lifts of) $T_i$ in the same way as the one-point case and use Siebenmann's result \cite[6.10]{Si} to obtain the desired homeomorphisms.
This completes the proof.
\end{proof}

We conclude this section by defining regular fibers over primitive extremal subsets (cf.\ Section \ref{sec:ser}).
Let $E$ be a primitive extremal subset of $X$ and $\mathring E$ its main part.
Let $p\in\mathring E$ and take $r>0$ such that $\dist_p$ is regular on $\bar B(p,r)\setminus\{p\}$.
We call such a pair $(p,r)$ \textit{fiber data} on $E$.
Suppose $\mu\ll r$ is small enough and lift it to $M$.
We call the closed ball $\bar B(\hat p,r)$ a \textit{regular fiber} over $E$ in $M$.

As before, all regular fibers over $E$ are homotopy equivalent (cf.\ Lemma \ref{lem:rf}).
To show this, we need the following basic fact.

\begin{lem}\label{lem:conn}
The main part $\mathring E$ is connected.
\end{lem}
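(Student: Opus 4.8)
The plan is to reduce the statement to the topological connectedness of $E$ itself together with the structure of the stratification near a point of a proper extremal subset. First I would recall that $E$, being a primitive extremal subset, is the closure of $\mathring E$ (this is stated just after Definition \ref{dfn:prim}), so it suffices to prove that $\mathring E$ is connected; equivalently, since $\mathring E$ is dense in $E$, it suffices to show $E$ itself is connected and that removing the lower-dimensional set $E\setminus\mathring E$ does not disconnect it. Connectedness of $E$ should follow from the fact that $X$ has a well-behaved stratification into topological manifolds (Theorem \ref{thm:prim}) — but more directly, I would invoke the characterization of extremal subsets via gradient flows (Remark \ref{rem:ext}): $E$ is invariant under the gradient flow of any semiconcave function, and using the gradient flow of a distance function $\dist_q$ from a suitable point one can contract $E$ toward a point, or at least connect any two components, much as in the proof of Theorem \ref{thm:sc}.

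The cleaner route, which I would actually take, is the following. By Theorem \ref{thm:prim}, $\mathring E$ is a topological manifold, hence its connected components coincide with its path components, and its local dimension is constant equal to $m=\dim E$ (this constancy for primitive extremal subsets is noted after Definition \ref{dfn:prim}). Now $E\setminus\mathring E$ is, by definition, the union of all proper extremal subsets of $E$; by the local finiteness of extremal subsets and the dimension estimate (the Hausdorff dimension of an extremal subset equals its topological dimension, and a proper extremal subset of the primitive $E$ must have strictly smaller dimension on each component of $E$), the set $E\setminus\mathring E$ has topological dimension $\le m-1$. A set of dimension $\le m-1$ cannot locally disconnect an $m$-manifold; hence the natural map from components of $\mathring E$ to components of $E$ is a bijection. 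So it remains to prove $E$ is connected.

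To see that $E$ is connected: fix $p\in E$ and consider the distance function $\dist_p$ restricted to $E$. Suppose $E$ had a component $E'$ not containing $p$. Pick $q\in E'$ realizing the minimum of $\dist_p|_{E'}$; since $p\notin E'$ and $E'$ is relatively open and closed in $E$ (a union of components), $q$ is a local minimum of $\dist_p|_E$ with $p\notin E$. By the extremality condition (E) in Definition \ref{dfn:ext}, $q$ is a critical point of $\dist_p$, i.e. $d_q\dist_p\le 0$ on $T_q$. But then the gradient flow of $\dist_p$ fixes $q$, while for points slightly inside $E'$ near a shortest path from $q$ to $p$ the flow moves them — and by invariance of $E$ under this flow one can follow the flow line from $q$ toward $p$ and derive a contradiction with $E'$ being a whole component: more precisely, a standard first-variation argument shows a shortest geodesic from $q$ into the rest of $E$ must exist, contradicting that $E'$ is isolated. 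This reproduces Perelman--Petrunin's original argument.

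The main obstacle I anticipate is the dimension-counting step — justifying that $E\setminus\mathring E$ has topological dimension at most $m-1$ and therefore cannot locally separate the $m$-manifold $\mathring E$. This relies on the coincidence of Hausdorff and topological dimension for extremal subsets (quoted in the excerpt), the local finiteness of the collection of extremal subsets, and the fact that for a \emph{primitive} $E$ every proper extremal subset has empty interior, hence strictly smaller local dimension. One must also be slightly careful that "connected" and "path-connected" agree here, which is fine since both $E$ (a finite-dimensional locally contractible space, e.g. by the conical structure of Theorem \ref{thm:good} applied to $X$) and $\mathring E$ (a manifold) are locally path-connected. Everything else is routine, and the argument does not require $X$ to be collapsing or to have no proper extremal subsets — it is purely a statement about the intrinsic geometry of $E$.
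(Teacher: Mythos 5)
Your proposal has two genuine gaps, and the central one is the dimension-counting step. It is not true that a closed subset of topological dimension $m-1$ cannot (locally) disconnect an $m$-manifold: the equator disconnects a sphere, and a single point disconnects an interval, so for $m=1$ your claim has no content at all. The set $E\setminus\mathring E$ can perfectly well have codimension one in $E$ (boundary-type strata), so no purely dimensional argument can show that deleting it does not disconnect $\mathring E$; that this does not happen is precisely the content of the lemma, and it must come from extremality and primitivity rather than from dimension. The second gap is in your proof that $E$ is connected: condition (E) of Definition \ref{dfn:ext} applies only to distance functions $\dist_q$ with $q\notin E$, whereas you take the reference point $p$ inside $E$, so the conclusion that the minimum point of $\dist_p|_{E'}$ is a critical point of $\dist_p$ is not justified, and the subsequent ``standard first-variation argument'' is left vague exactly where the work would have to be done.

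The paper proves the lemma in one stroke via the gradient-flow characterization of extremality (Remark \ref{rem:ext}). Suppose $\mathring E=U_1\sqcup U_2$ with $U_1,U_2$ nonempty, disjoint and open, and set $F:=E\setminus\mathring E$, which is itself an extremal subset. No gradient curve can run from a point of $U_1$ to a point of $U_2$: such a curve stays in $E$, cannot meet $F$ (otherwise it would remain in $F$ by the invariance of $F$, contradicting that its endpoint lies in $U_2$), hence lies in $\mathring E$; being connected, it cannot join the two disjoint open pieces. Therefore $U_1\cup F=E\setminus U_2$ is closed and invariant under all gradient flows, i.e.\ an extremal subset, and it is a proper subset of $E$ with nonempty relative interior --- contradicting primitivity. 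If you want to keep part of your outline, the same flow-invariance argument is also the clean way to see that $E$ itself is connected (each component is flow-invariant, hence extremal, and local finiteness plus primitivity force a single component), but the separation step for $\mathring E$ cannot be replaced by dimension counting.
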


\begin{proof}
Note that $F:=E\setminus\mathring E$ is also an extremal subset (of lower dimension).
Assume $\mathring E$ can be written as a union of disjoint nonempty open subsets $U_1$ and $U_2$.
We show that $U_1\cup F$ is an extremal subset, which contradicts the primitiveness of $E$.
Indeed, $F$ is invariant under any gradient flow.
Suppose that $p_1\in U_1$ and $p_2\in U_2$ can be joined by some gradient curve $\gamma$.
Then $\gamma$ is contained in $\mathring E=E\setminus F$ since so are its endpoints.
This contradicts the assumption.
Therefore $U_1\cup F$ is invariant under any gradient flow.
\end{proof}

\begin{lem}\label{lem:rfe}
Let $(p,r)$ and $(q,s)$ be fiber data on $E$.
Then, if $\mu$ is small enough, the corresponding regular fibers $\bar B(\hat p,r)$ and $\bar B(\hat q,s)$ in $M$ are homotopy equivalent.
\end{lem}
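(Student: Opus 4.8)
The plan is to follow the structure of the proof of Lemma~\ref{lem:rf}, replacing the shortest path and the convexity of the regular locus used there by the connectedness of the main part $\mathring E$ (Lemma~\ref{lem:conn}) together with the condition (DRN) satisfied by every point of $\mathring E$ (Theorem~\ref{thm:sere}).

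First I would dispose of the special case in which the two fiber data sit at the same point, i.e.\ $p=q$, with radii $r\le s$. Then $\dist_p$ is regular on $\bar B(p,s)\setminus\{p\}$, in particular on the closed annulus $\bar B(p,s)\setminus B(p,r)$, which is bounded away from $p$. By lower semicontinuity of $|\nabla\dist|$ under Gromov--Hausdorff convergence, $\dist_{\hat p}$ is regular on $\bar B(\hat p,s)\setminus B(\hat p,r)$ once $\mu$ is small; applying the fibration theorem~\ref{thm:adm} (in its version for regular semiconcave functions) to $\dist_{\hat p}$ on this annulus trivializes it over the interval and produces a deformation retraction of $\bar B(\hat p,s)=\dist_{\hat p}^{-1}([0,s])$ onto $\bar B(\hat p,r)=\dist_{\hat p}^{-1}([0,r])$ by pushing the annulus inward while fixing $B(\hat p,r)$ pointwise. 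In particular no regularity near $\hat p$ is needed, and the two regular fibers at a single point are homotopy equivalent.

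For the general case, by Lemma~\ref{lem:conn} the main part $\mathring E$ is connected, hence, being a topological manifold by Theorem~\ref{thm:prim}, path-connected; fix a continuous path $\gamma\colon[0,1]\to\mathring E$ from $p$ to $q$. By Theorem~\ref{thm:sere} every $\gamma(t)$ is a good point of $E$, so, after fixing some $R(t)>0$, the condition (DRN) provides $\rho(t):=\rho(\gamma(t),R(t))>0$. The sets $\gamma^{-1}(B(\gamma(t),\rho(t)))$ form an open cover of $[0,1]$, so by the Lebesgue number lemma there is a partition $0=t_0<\dots<t_N=1$ such that each $\gamma([t_{i-1},t_i])$ lies in a single ball $B(\gamma(u_i),\rho(u_i))$; in particular the consecutive points $\gamma(t_{i-1})$ and $\gamma(t_i)$ both lie in $B(\gamma(u_i),\rho(u_i))\cap E$. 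Choose radii $\varepsilon_i>0$ small enough that $(\gamma(t_i),\varepsilon_i)$ is fiber data on $E$ and $\varepsilon_i$ lies below the thresholds demanded by (DRN) at $\gamma(u_i)$ and at $\gamma(u_{i+1})$, and also take $\varepsilon_0\le r$, $\varepsilon_N\le s$. Since there are only finitely many such invocations of (DRN), all depending only on $X$ and on the fixed path $\gamma$ and not on $M$, one may choose a single $\mu$ small enough for all of them. Then, for each $i$, (DRN) at $\gamma(u_i)$ applied with $q_1=\gamma(t_{i-1})$, $q_2=\gamma(t_i)$ and radii $\varepsilon_{i-1},\varepsilon_i$ produces a subset of $M$ that deformation retracts onto both $\bar B(\widehat{\gamma(t_{i-1})},\varepsilon_{i-1})$ and $\bar B(\widehat{\gamma(t_i)},\varepsilon_i)$, so these two regular fibers are homotopy equivalent. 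Composing these equivalences along the chain, and invoking the special case at the two ends to pass from $\bar B(\hat p,\varepsilon_0)$ to $\bar B(\hat p,r)$ and from $\bar B(\hat q,\varepsilon_N)$ to $\bar B(\hat q,s)$, yields the asserted homotopy equivalence.

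The substantive content---Theorem~\ref{thm:sere} and the deformation-retract-neighborhood machinery behind (DRN)---is already available, so the main point requiring care is the order of the quantifiers: since (DRN) controls the homotopy type of $\bar B(\hat q_i,\varepsilon_i)$ only for $\mu$ small \emph{depending on} the radius $\varepsilon_i$, the path $\gamma$, the partition $\{t_i\}$, the good points $\gamma(u_i)$, and all the radii $\varepsilon_i$ must be fixed once and for all before $M$ is allowed to vary. This is exactly why the special-case step is carried out separately, to reconcile the auxiliary radii $\varepsilon_i$ with the given data $(p,r)$ and $(q,s)$. Once this ordering is respected, the argument is routine.
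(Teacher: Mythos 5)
Your proposal is correct and follows essentially the same route as the paper: the case $p=q$ is handled by the fibration theorem applied to the lifted distance function on the annulus, and the general case by a chain of good points along a path in $\mathring E$ (Lemma~\ref{lem:conn}, Theorem~\ref{thm:sere}), using (DRN) at finitely many points to link consecutive small balls and then reconciling the radii at the endpoints via the one-point case. The only cosmetic difference is your Lebesgue-number bookkeeping in place of the paper's $\rho:=\inf_{x\in\gamma}\rho(x,R)$, which does not change the argument.
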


\begin{proof}
If $p=q$, then the claim immediately follows from the fibration theorem.
Assume $p\neq q$ and connect them by a curve $\gamma$ in $\mathring E$.
Then, every point on $\gamma$ is good in $E$ by Theorem \ref{thm:sere}.
Fix arbitrary $R>0$ and set $\rho:=\inf_{x\in\gamma}\rho(x,R)>0$.
Choose points $p=x_1$, $x_2$, \dots, $x_N=q$ on $\gamma$ in this order so that one can find $y_i\in\gamma\cap B(x_i,\rho)\cap B(x_{i+1},\rho)$ ($1\le i\le N-1$).
Put $y_0:=p$ and $y_N:=q$.
Take sufficiently small $t>0$ for all $y_i$ by the condition (DRN).
Then, if $\mu$ is small enough, there exist deformation retract neighborhoods $U_i$ for $\bar B(\hat y_{i-1},t)$ and $\bar B(\hat y_i,t)$ ($1\le i\le N$).
Thus $\bar B(\hat p,t)$ is homotopic to $\bar B(\hat q,t)$.
Together with the case $p=q$, this completes the proof.
\end{proof}

\begin{rem}\label{rem:rfe}
Let $f:X\to\mathbb R^m$ be regular at $p\in E$, where $m=\dim E$, as in Lemma \ref{lem:sere1}.
Note that $p\in\mathring E$ since $E\setminus\mathring E$ is an extremal subset of lower dimension.
Take a canonical neighborhood $K(p,a)$ respecting $f$ as in the proof of the lemma.
Then, the fibration theorem shows that the lift $\hat K(p,a)$ is homotopy equivalent to some regular fiber $\bar B(\hat p,r)$.
Note that $\hat K(p,a)$ also admits a deformation retraction onto a fiber of $\hat f|_{\hat K(p,a)}$.
In particular, in case $E=X$, the above regular fiber is homotopy equivalent to the original one in Section \ref{sec:ser}.
\end{rem}

\subsection{Good covering and extremal subsets}\label{sec:cove}

In this section, we study the relation between a lift of a good covering of $X$ and extremal subsets of $X$ and prove Theorem \ref{thm:ext}.

Let $\mathcal U=\{U_\alpha\}_{\alpha=1}^N$ be a good covering of $X$.
We use the same notation as in Section \ref{sec:prf}: $h_\alpha$ denotes a strictly concave function defining $U_\alpha$ and $p_\alpha$ denotes its unique maximum point.
Moreover, for a subset $A\subset\{1,\dots,N\}$, we define $U_A$, $h_A$, and $p_A$ as before.
Natural lifts of them in $M$ are denoted by $\hat U_\alpha$, $\hat h_\alpha$, $\hat U_A$, and $\hat h_A$.
In addition, the following notation is used here.
\[E_A:=\bigcap_{E\cap U_A\neq\emptyset} E=\bigcap_{p_A\in E}E,\]
where $E$ runs over all extremal subsets of $X$ satisfying the above conditions.
The second equality follows from the fact that the contraction of $U_A$ to $p_A$ given by gradient flows preserves extremal subsets.
Note that $E_A$ is a primitive extremal subset and that $p_A$ is in the main part $\mathring E_A$.

The following is a generalization of Proposition \ref{prop:shf}.

\begin{prop}\label{prop:shfe}
Let $\mathcal U=\{U_\alpha\}_{\alpha=1}^N$ be a good covering of $X$ as above.
Suppose $\mu$ is small enough.
Then, for any $A$, the lift $\hat U_A$ has the homotopy type of a regular fiber over $E_A$.
Furthermore, if $A\subset A'$ and $E_A=E_{A'}$, then the inclusion $\hat U_{A'}\hookrightarrow\hat U_A$ is a homotopy equivalence.
\end{prop}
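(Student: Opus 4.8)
The plan is to imitate the proof of Proposition \ref{prop:shf} almost verbatim, with two substitutions. First, the hypothesis that $X$ has no proper extremal subsets — used there only to guarantee that every point of $X$ is good — is replaced by the fact that $p_A$ lies in the main part $\mathring E_A$ of the primitive extremal subset $E_A$, hence is a good point of $E_A$ in the sense of Definition \ref{dfn:drn} by Theorem \ref{thm:sere}. Second, regular fibers and Lemma \ref{lem:rf} are to be replaced throughout by regular fibers over $E_A$ and Lemma \ref{lem:rfe}.

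To show that $\hat U_A$ has the homotopy type of a regular fiber over $E_A$, I would first choose $R>0$ so small that $B(p_A,R)\Subset U_A$, take $\rho=\rho(p_A,R)>0$ from the condition (DRN) for the good point $p_A\in\mathring E_A$, pick $\varepsilon>0$ with $U_A(\varepsilon):=\{h_A>h_A(p_A)-\varepsilon\}\Subset B(p_A,\rho)$, and pick a point $q_A\in U_A(\varepsilon)\cap\mathring E_A$ together with $r_A>0$ small enough that $\dist_{q_A}$ is regular on $\bar B(q_A,r_A)\setminus\{q_A\}$ and $B(q_A,r_A)\Subset U_A(\varepsilon)$; thus $(q_A,r_A)$ is fiber data on $E_A$. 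After lifting to $M$ for small $\mu$, applying (DRN) with $q_1=q_2=q_A$ produces a deformation retract neighborhood $V_A$ with $B(\hat p_A,\rho)\subset V_A\subset B(\hat p_A,R)$ that deformation retracts onto the regular fiber $\bar B(\hat q_A,r_A)$ over $E_A$, whence for $\mu$ small we obtain
\[\bar B(\hat q_A,r_A)\subset\Cl(\hat U_A(\varepsilon))\subset V_A\subset\hat U_A,\qquad\hat U_A(\varepsilon):=\{\hat h_A>h_A(p_A)-\varepsilon\}.\]
The inclusion $\bar B(\hat q_A,r_A)\hookrightarrow V_A$ is a homotopy equivalence by (DRN), while $\Cl(\hat U_A(\varepsilon))$ is a deformation retract of $\hat U_A$ via the gradient flow of $\hat h_A$, exactly as in Proposition \ref{prop:shf} (here one uses that $\hat h_A$ is regular on $\hat U_A$ outside a small neighborhood of $\hat p_A$ contained in $\hat U_A(\varepsilon)$). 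It then follows, as there, that $\bar B(\hat q_A,r_A)\hookrightarrow\hat U_A$ is a homotopy equivalence.

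For the second assertion, suppose $A\subset A'$ and $E_A=E_{A'}$; then $U_{A'}\subset U_A$, and after choosing regular fibers $\bar B(\hat q_A,r_A)$ and $\bar B(\hat q_{A'},r_{A'})$ over $E_A=E_{A'}$ as above (each with a homotopy equivalence into $\hat U_A$ resp.\ $\hat U_{A'}$ by the first part), it suffices to show that $\bar B(\hat q_{A'},r_{A'})\hookrightarrow\hat U_A$ is a homotopy equivalence. Both $q_A$ and $q_{A'}$ lie in $U_A\cap\mathring E_A$, since $q_{A'}\in U_{A'}\subset U_A$ and $q_{A'}\in\mathring E_{A'}=\mathring E_A$. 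The key point is that $U_A\cap\mathring E_A$ is path-connected: the strong Lipschitz contraction of $U_A$ onto $p_A$ furnished by Theorem \ref{thm:good} is built from gradient flows, which preserve every extremal subset (Remark \ref{rem:ext}) — in particular $E_A$ and each of its proper extremal subsets, hence $\mathring E_A$ — so, arguing as in the proof of Lemma \ref{lem:conn}, this contraction restricts to a deformation retraction of $U_A\cap\mathring E_A$ onto $p_A$. I would then join $q_A$ and $q_{A'}$ by a curve $\gamma\subset U_A\cap\mathring E_A$ and run the proof of Lemma \ref{lem:rfe} along $\gamma$ to obtain a homotopy equivalence $\Phi\colon\bar B(\hat q_{A'},r_{A'})\to\bar B(\hat q_A,r_A)$; since $\gamma$ is a compact subset of the open set $U_A$, all the deformation retract neighborhoods appearing in this construction can be taken inside $\hat U_A$ for $\mu$ small, so the composition $\bar B(\hat q_{A'},r_{A'})\xrightarrow{\,\Phi\,}\bar B(\hat q_A,r_A)\hookrightarrow\hat U_A$ is homotopic within $\hat U_A$ to the inclusion. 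Hence $\bar B(\hat q_{A'},r_{A'})\hookrightarrow\hat U_A$, and therefore $\hat U_{A'}\hookrightarrow\hat U_A$, is a homotopy equivalence.

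The main obstacle, and the one genuinely new ingredient beyond Proposition \ref{prop:shf}, is the path-connectedness of $U_A\cap\mathring E_A$ used to choose $\gamma$ and to keep the Lemma \ref{lem:rfe} construction inside $\hat U_A$; it plays the role that convexity of the elements of a good covering played in Proposition \ref{prop:shf}, and rests on the gradient-flow invariance of $\mathring E_A$ together with the conical contraction structure from Theorem \ref{thm:good}. Everything else is a direct transcription of the arguments of Section \ref{sec:prf}, with finitely many $A$'s so that a single $\mu$ (depending only on $X$ and $\mathcal U$) works.
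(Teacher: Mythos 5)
Your proposal is correct and follows the same overall skeleton as the paper: a sandwich argument showing $\hat U_A$ deformation retracts/includes onto a regular fiber over $E_A$, followed by chaining regular fibers along a broken gradient curve in $\mathring E_A\cap U_A$ and invoking the construction of Lemma \ref{lem:rfe} inside a small neighborhood of that curve. The one place you diverge is the first half: you route through Theorem \ref{thm:sere} and a (DRN) deformation retract neighborhood at a point $q_A$ near $p_A$, mimicking Proposition \ref{prop:shf}, whereas the paper argues more directly --- since $p_A\in\mathring E_A$, the pair $(p_A,\tilde r)$ is itself fiber data on $E_A$, and one sandwiches $\bar B(\hat p_A,r)\subset\Cl(\hat U_A(\varepsilon))\subset B(\hat p_A,\tilde r)\subset\hat U_A$, using the fibration theorem for $\dist_{\hat p_A}$ on the punctured ball and the gradient flow of $\hat h_A$, with no appeal to good points at all; both routes work, the paper's being a bit more economical. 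One caution on your second half: the phrase ``gradient flows preserve $E_A$ and each proper extremal subset, hence $\mathring E_A$'' is not literally a valid deduction, since invariance of a closed set does not give invariance of its complement (gradient curves can enter extremal subsets); what saves the argument --- and what both you (via the reference to Lemma \ref{lem:conn}) and the paper actually use --- is forward invariance: a broken gradient curve whose terminal point $p_A$ lies in $\mathring E_A$ can never have entered a proper extremal subset, so the contraction trajectories of points of $\mathring E_A\cap U_A$ stay in $\mathring E_A\cap U_A$. With that reading, your connectivity claim and the choice of $R$ with $U(\gamma,R)\Subset U_A$ coincide with the paper's argument.
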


\begin{proof}
First we show that $\hat U_A$ has the homotopy type of a regular fiber over $E_A$.
Since $p_A\in\mathring E_A$, it can define a regular fiber over $E_A$.
Take $\tilde r>0$ such that $\dist_{p_A}$ is regular on $\bar B(p_A,\tilde r)\setminus\{p_A\}$ and $B(p_A,\tilde r)\Subset U_A$.
Choose $\varepsilon>0$ so that $U_A(\varepsilon):=\{h_A>h_A(p_A)-\varepsilon\}\Subset B(p_A,\tilde r)$.
Find $r>0$ such that $B(p_A,r)\Subset U_A(\varepsilon)$.
Now suppose $\mu$ is small enough and lift them to $M$.
Then we have the inclusions
\[\bar B(\hat p_A,r)\subset\Cl(\hat U_A(\varepsilon))\subset B(\hat p_A,\tilde r)\subset\hat U_A,\]
where $\Cl(\hat U_A(\varepsilon))$ denotes the closure of a natural lift of $U_A(\varepsilon)$.
Here $\bar B(\hat p_A,r)$ is a deformation retract of $B(\hat p_A,\tilde r)$ by the fibration theorem.
Furthermore $\Cl(\hat U_A(\varepsilon))$ is a deformation retract of $\hat U_A$ by the gradient flow of $\hat h_A$.
Therefore $\bar B(\hat p_A,r)$ is also a deformation retract of $\hat U_A$.

Next we show that $\hat U_{A'}\hookrightarrow\hat U_A$ is a homotopy equivalence if $E_A=E_{A'}=:E$.
Take regular fibers $\bar B(p_A,r)$ and $\bar B(p_{A'},r)$ as above such that the inclusions $\bar B(p_A,r)\hookrightarrow\hat U_A$ and $\bar B(p_{A'},r)\hookrightarrow\hat U_{A'}$ are homotopy equivalences.
Let $\Phi:\bar B(p_{A'},r)\to\bar B(p_A,r)$ be a homotopy equivalence constructed in the proof of Lemma \ref{lem:rfe}.
Recall that the construction is done on an arbitrarily small neighborhood of a curve $\gamma$ connecting $p_{A'}$ and $p_A$ in $\mathring E$.
On the other hand, using the contractibility of $U_A$, one can connect $p_{A'}$ to $p_A$ by a (broken) gradient curve in $U_A$.
This gradient curve is contained in $\mathring E$ since so are its endpoints.
Thus we can use it as $\gamma$.
In particular we can take $R>0$ in the proof of Lemma \ref{lem:rfe} so that $U(\gamma,R)\Subset U_A$.
Then one can easily check that the composition $\bar B(p_{A'},r)\xrightarrow\Phi\bar B(p_A,r)\hookrightarrow\hat U_A$ is homotopic to the inclusion $\bar B(p_{A'},r)\hookrightarrow\hat U_A$ in $\hat U_A$.
Hence $\bar B(p_{A'},r)\hookrightarrow\hat U_A$ is also a homotopy equivalence and so is $\hat U_{A'}\hookrightarrow\hat U_A$.
\end{proof}

Let $E$ be a primitive extremal subset of $X$.
To prove Theorem \ref{thm:ext}(2), we need the following additional observation (the primitiveness is not needed here).

\begin{lem}\label{lem:sc}
Let $p\in E$ and let $h$ be a strictly concave function on a small neighborhood $U$ of $p$ constructed in Theorem \ref{thm:sc}.
Then $h'(\uparrow_x^E)>c_p$ for any $x\in U\setminus E$, where $c_p$ is a positive constant (depending only on the dimension and the volume of $\Sigma_p$).
In particular, the distance function $\dist_E$ from $E$ is monotonically decreasing along the gradient flow of $h$.
\end{lem}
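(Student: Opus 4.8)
The plan is to read the estimate off the explicit construction of $h$ in Theorem~\ref{thm:sc} and to combine it with the very definition of an extremal subset; the final assertion then follows at once, and also follows independently from the contracting property of gradient flows.

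First I would recall that, passing to the double if $p\in\partial X$, the function $h$ is a minimum $h=\min_\beta h_\beta$ of finitely many admissible functions $h_\beta$, each $(-c_p)$-concave on $U$ and built out of distance functions $\dist_q$ from points $q$ near $p$ in regular directions, the directions $\uparrow_p^q$ to all these points forming a $\rho$-dense family in $\Sigma_p$; the number of the $h_\beta$, the parameter $\rho$ and the constant $c_p$ depend, through a covering number of $\Sigma_p$, only on $\dim\Sigma_p$ and $\vol\Sigma_p$ (see \cite{Per:mor}, \cite[\S4]{K:reg}, \cite{MY:good}). One may moreover arrange this family to be compatible with the extremal subset $\Sigma_pE\subset\Sigma_p$ (\cite{PP:ext}), so that each point $q$ used either lies on $E$ or is at a definite angular distance from $E$ as seen from $p$.

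Now fix $x\in U\setminus E$ and a nearest point $x'\in E$ of $E$ to $x$; after shrinking $U$ we may assume $x'\in U$, and then $\uparrow_x^E=\uparrow_x^{x'}$. By the first variation formula, $h'(\uparrow_x^{x'})=\min_\beta d_xh_\beta(\uparrow_x^{x'})$, the minimum over the $\beta$ active at $x$; expressing $h_\beta$ through its defining points $q$ with (increasing) profile functions $\varphi_q$ gives $d_xh_\beta(\uparrow_x^{x'})=-\sum_q\varphi_q'(\dist_q(x))\,\langle\uparrow_x^q,\uparrow_x^{x'}\rangle$, so it suffices to show $\langle\uparrow_x^q,\uparrow_x^{x'}\rangle\le-\sin\delta$ for a fixed $\delta=\delta(\rho)>0$ and every defining point $q$ of every active $h_\beta$. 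Here extremality enters: since $x'$ is a nearest point of $E$ and $x\notin E$, it is a critical point of $\dist_x$ (condition (E) in Definition~\ref{dfn:ext}), hence $\uparrow_{x'}^x$ makes angle $\ge\pi/2$ with $\Sigma_{x'}E$; transporting this along the short geodesic $xx'$ shows that any defining point lying on $E$ is seen from $x$ at angle $<\pi/2-\delta'$ from $\uparrow_x^{x'}$ and therefore cannot be active (this is where the active directions being $O(\rho)$-close to $\uparrow_p^x$, together with the compatibility of the family with $\Sigma_pE$, are used). Consequently all defining points of the active $h_\beta$ lie on the same side of $E$ as $x$ with a uniform angular margin, and a comparison estimate on the thin triangles $x\,x'\,q$ yields $\langle\uparrow_x^q,\uparrow_x^{x'}\rangle\le-\sin\delta$; hence $h'(\uparrow_x^E)>c_p$ with $c_p=c_p(\dim\Sigma_p,\vol\Sigma_p)>0$.

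For the last assertion I would argue in two ways. Directly: by the gradient inequality, $d_x\dist_E(\nabla_xh)\le-\langle\uparrow_x^E,\nabla_xh\rangle\le-h'(\uparrow_x^E)<-c_p<0$; since $h$ increases along its gradient flow $\Phi_t$, whose trajectories therefore remain in $U$ (as $h$ has its unique maximum at $p$), while $E$ is $\Phi_t$-invariant (Remark~\ref{rem:ext}), the estimate applies at each $\Phi_t(x)$, so $\dist_E\circ\Phi_t$ is strictly decreasing. Alternatively, without the pointwise estimate: as $h$ is $(-c_p)$-concave, $\Phi_t$ is an $e^{-c_pt}$-contraction (\cite{Pet:semi}), whence $\dist_E(\Phi_t(x))\le|\Phi_t(x)\,\Phi_t(x')|\le e^{-c_pt}\dist_E(x)$. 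I expect the real difficulty to lie entirely in the previous paragraph — the bookkeeping that the active directions of $h$ near a point off $E$ are uniformly transverse to $E$ — and this is precisely where the critical-point characterization of extremal subsets is indispensable.
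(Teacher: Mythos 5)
Your closing paragraph (monotonicity of $\dist_E$ along the flow, and the contraction alternative) is fine, but the core estimate rests on a step that fails. You reduce the lemma to the pointwise claim that \emph{every} defining point $q$ of every active $h_\beta$ satisfies $\langle\uparrow_x^q,\uparrow_x^{x'}\rangle\le-\sin\delta$, i.e.\ that the angle at $x$ is $\ge\pi/2+\delta$. What extremality actually yields, via criticality of the foot point $x'$ for $\dist_x$ and the thin-triangle comparison, is only an angle $\ge\pi/2-\varepsilon$, and the two devices you use to upgrade this do not work. First, you modify the construction so that the defining points are ``compatible with $\Sigma_pE$''; but the lemma is about the function of Theorem~\ref{thm:sc} as it stands (which is what an arbitrary good covering consists of), so this proves a different statement, and in a general Alexandrov space a definite angular margin from $E$ \emph{seen from $p$ at scale $R$} does not transfer to a definite angle at $x$, because $\dist(x,E)$ can be arbitrarily small compared with every scale fixed in the construction and $E$ need not be convex. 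Second, and more seriously, the assertion that a defining point on (or near) $E$ is seen from $x$ at angle $<\pi/2-\delta'$ and that its $h_\beta$ therefore ``cannot be active'' is unjustified and false: whether $h_\beta$ is active at $x$ is decided by the value $h_\beta(x)$, not by the position of individual defining points relative to $E$. Concretely, take $X$ the flat upper half-plane, $E=\partial X$, $p\in E$, $x=(s,d)$ with $0<d\ll s<r$: the active group is the one whose net points lie in direction $\approx\uparrow_p^x$, nearly tangent to $E$; by the maximality/density requirements it may contain net points on or arbitrarily close to $E$, for which $\angle qxx'=\pi/2-O(d/R)$, i.e.\ below $\pi/2$ by an amount tending to $0$, while the group is active. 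So no uniform pointwise bound of the form $\le-\sin\delta$ holds for the function the lemma is about.

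The missing idea is the averaging (packing) argument inside each group, which is how the paper gets positivity without any pointwise transversality: each $h_\alpha$ is the \emph{average} of $N_\alpha\ge c\vol\Sigma_p/\varepsilon^{k-1}$ functions $\varphi_{R,r}\circ\dist_{q_{\alpha\beta}}$ whose directions at $x$ are $\varepsilon/2$-separated; extremality gives $|\uparrow_x^E\Uparrow_x^{q_{\alpha\beta}}|\ge\pi/2-\varepsilon$ for \emph{all} $\beta$, and a packing bound in $\Sigma_x$ shows that at most $C/\varepsilon^{k-2}$ of the $\beta$ can have this angle within $\varepsilon$ of $\pi/2$; hence the overwhelming majority have angle $\ge\pi/2+\varepsilon$, and the average of $\varphi_{R,r}'\cos(\cdot)$ is bounded above by a negative constant depending only on $k$ and $\vol\Sigma_p$. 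This gives $h_\alpha'(\uparrow_x^E)>c_p$ for every $\alpha$, hence for the minimum $h$, with the construction unchanged. Note also that the application in Section~\ref{sec:cove} uses the quantitative bound $h_A'(\uparrow_x^E)>c_A$ itself (via lower semicontinuity after lifting to $M$), so the $e^{-c_pt}$-contraction shortcut for the final assertion cannot replace the main estimate.
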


\begin{proof}
Let us first recall the construction of $h$ (see \cite[\S4]{K:reg} for more details).
Take sufficiently small $\varepsilon>0$ which will be determined later by the volume of $\Sigma_p$.
Let $R>0$ be so small that the rescaled ball $R^{-1}B(p,R)$ is sufficiently close to the unit ball in $T_p$ (depending on $\varepsilon$).
Take a $\pi R/16$-net $\{q_\alpha\}_{\alpha=1}^N\subset\partial B(p,R)$ and a maximal $\varepsilon R$-discrete set $\{q_{\alpha\beta}\}_{\beta=1}^{N_\alpha}\subset\partial B(p,R)\cap B(q_\alpha,\pi R/16)$ for each $\alpha$.
Note that the Bishop-Gromov inequality implies $N_\alpha\ge c\vol\Sigma_p/\varepsilon^{k-1}$, where $c$ is a constant depending only on $k=\dim X$.
Let $r>0$ be small enough compared to $R$ and $\varepsilon$.
Define functions $h_\alpha$ and $h$ on $B(p,r)$ by
\[h_\alpha:=\frac1{N_\alpha}\sum_{\beta=1}^{N_\alpha}\varphi_{R,r}\circ\dist_{q_{\alpha\beta}},\quad h:=\min_{1\le\alpha\le N}h_{\alpha}\]
where $\varphi_{R,r}(t)=(t-R)-(t-R)^2/4r$.
Then $h_\alpha$ is strictly concave on $B(p,r)$ since $\varphi_{R,r}''(t)=-1/2r$ and $\dist_{q_{\alpha\beta}}'(\xi)$ for fixed $\xi$ are uniformly bounded away from zero for a majority of $\beta$ (see the argument below).
Therefore $h$ is also strictly concave.
Furthermore, $h$ has a strict maximum at $p$ since the directions $\uparrow_p^{q_\alpha}$ ($1\le\alpha\le N$) are $\pi/8$-dense in $\Sigma_p$.

Let us show that $h'(\uparrow_x^E)>c_p$ for any $x\in B(p,r)\setminus E$.
Let $y\in E$ be a closest point to $x$.
Note that $|\tilde\angle q_{\alpha\beta}xy+\tilde\angle q_{\alpha\beta}yx-\pi|<\varepsilon$ as $r\ll R$.
Since $\tilde\angle q_{\alpha\beta}yx\le\pi/2$ by the extremality of $E$, we have $\tilde\angle q_{\alpha\beta}xy\ge\pi/2-\varepsilon$.
On the other hand, the directions $\uparrow_x^{q_{\alpha\beta}}$ ($1\le\beta\le N_\alpha$) are $\varepsilon/2$-discrete for each $\alpha$.
Hence the number of $\beta$ such that $||\uparrow_x^E\Uparrow_x^{q_{\alpha\beta}}|-\pi/2|<\varepsilon$ is less than $C/\varepsilon^{k-2}$, where $C$ is a constant depending only on $k$.
In particular it is much less than $N_\alpha$ provided $\varepsilon$ is small enough.
Therefore, the mean value of $\varphi_{R,r}'(|q_{\alpha\beta}x|)\cos|\uparrow_x^E\Uparrow_x^{q_{\alpha\beta}}|$ is less than some negative constant, in other words, $h_\alpha'(\uparrow_x^E)>c_p$.
Thus we obtain $h'(\uparrow_x^E)>c_p$.
Furthermore, since $h'(\uparrow_x^E)\le\langle\nabla_xh,\uparrow_x^E\rangle\le-\dist_E'(\nabla_x h)$ by the property of the gradient (see Section \ref{sec:good}), $\dist_E$ is monotonically decreasing along the gradient flow of $h$.
\end{proof}

From now on, we assume that $E$ has no proper extremal subsets.
Let $\mathcal U=\{U_\alpha\}_{\alpha=1}^N$ be a good covering of $E$ in $X$ (we assume $U_\alpha\cap E\neq\emptyset$ for any $\alpha$ whenever we take such a covering).
Note that the restriction $\{U_\alpha\cap E\}_{\alpha=1}^N$ is also good in the topological sense, that is, any nonempty intersection of them is contractible, since gradient flows preserve extremal subsets.
Furthermore, the above lemma implies that $U_A\neq\emptyset\Leftrightarrow U_A\cap E\neq\emptyset$ for any $A\subset\{1,\dots,N\}$ (otherwise $h_A$ is regular at $p_A$).
In particular, the assumption that $E$ has no proper extremal subsets means $E_A=E$ for any $A$.

Now we can prove Theorem \ref{thm:ext}(2).

\begin{proof}[Proof of Theorem \ref{thm:ext}(2)]
Take a good covering $\mathcal U=\{U_\alpha\}_{\alpha=1}^N$ of $E$ as above.
Let $\rho>0$ be so small that $U(E,\rho)\Subset\bigcup_{\alpha=1}^NU_\alpha$.
Set $V_\alpha:=U_\alpha\cap U(E,\rho)$ and $V_A:=\bigcap_{\alpha\in A}V_\alpha$ and let $\hat V_\alpha$ and $\hat V_A$ denote natural lifts of them, respectively.
Then the same argument as in the proof of Proposition \ref{prop:shfe} show that $\hat V_A$ has the homotopy type of a regular fiber over $E$ and that $\hat V_{A'}\hookrightarrow\hat V_A$ is a homotopy equivalence for any $A\subset A'$.
Indeed, Lemma \ref{lem:sc} implies that $h_A'(\uparrow_x^E)>c_A>0$ for any $x\in V_A\setminus E$.
Together with $h_A'(\uparrow_x^E)\le-\dist_E'(\nabla _xh_A)$, this shows that the gradient flow of $h_A$ crushes $V_A$ to $p_A$ in $V_A$.
Similarly, since $\hat h_A'(\uparrow_{\hat x}^{\hat E})>c_A$ for any $\hat x\in\hat V_A\setminus U(\hat E,\rho/2)$ by the lower semicontinuity, the gradient flow of $\hat h_A$ pushes $\hat V_A$ into a neighborhood of $\hat p_A$ in $\hat V_A$.
Thus, one can replace $U_A$ and $\hat U_A$ with $V_A$ and $\hat V_A$ respectively in the proof of Proposition \ref{prop:shfe} to obtain the same statement for $\hat V_A$.
Furthermore, this flow argument shows that $\{\hat V_\alpha\}_{\alpha=1}^N$ has the same nerve as the restricted cover $\{U_\alpha\cap E\}_{\alpha=1}^N$.
Therefore, $H^\ast(\hat V_A)$ defines a locally constant presheaf on a (topological) good cover of $E$, which enables us to compute $H^\ast(U(\hat E,\rho))$ by a standard argument of spectral sequences.
\end{proof}

Next we prove Theorem \ref{thm:ext}(1).
Proposition \ref{prop:shfe} implies the homotopy lifting property over $E$ with respect to a good covering (cf.\ Proposition \ref{prop:lift}).
The proof is omitted since it is almost the same as that of Proposition \ref{prop:lift}, except that Proposition \ref{prop:shfe} is used instead of Proposition \ref{prop:shf}.

\begin{prop}\label{prop:lifte}
Let $\mathcal U=\{U_\alpha\}_{\alpha=1}^N$ be a good covering of $E$.
Suppose $E$ has no proper extremal subsets and $\mu$ is small enough.
Let $K$ be a finite simplicial complex.
Suppose $\sigma:K\times I\to E$ and $\hat\sigma:K\times\{0\}\to M$ satisfy that
\begin{itemize}
\item for any simplex $\Delta\subset K$, there exists $\alpha$ such that $\sigma(\Delta\times I)\subset U_\alpha$;
\item for any $\Delta\subset K$, if $\sigma(\Delta\times\{0\})\subset U_\alpha$, then $\hat\sigma(\Delta\times\{0\})\subset\hat U_\alpha$.
\end{itemize}
Then, $\hat\sigma$ can be extended to a map on $K\times I$ so that
\begin{itemize}
\item for any $\Delta\subset K$, if $\sigma(\Delta\times I)\subset U_\alpha$, then $\hat\sigma(\Delta\times I)\subset\hat U_\alpha$;
\item for any $\Delta\subset K$, if $\sigma(\Delta\times\{1\})\subset U_\alpha$, then $\hat\sigma(\Delta\times\{1\})\subset\hat U_\alpha$.
\end{itemize}
Furthermore, if $L$ is a subcomplex of $K$ and $\hat\sigma$ is already defined on $L\times I$ so that the above conclusions hold for any $\Delta\subset L$, then we can extend it.
\end{prop}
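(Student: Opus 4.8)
The plan is to follow the proof of Proposition~\ref{prop:lift} line by line, arguing by induction on the dimension of the simplices of $K$ as in \cite[2.4]{Per:col}; the only point that needs attention is to check that the homotopy equivalence furnished by Proposition~\ref{prop:shfe} is available in each step. So fix a simplex $\Delta\subset K\setminus L$ of dimension $i$. By the induction hypothesis $\hat\sigma$ is already defined on $\Delta\times\{0\}\cup\partial\Delta\times I$, and we must extend it over $\Delta\times I$ with the asserted containments. Let $A\subset A'\subset\{1,\dots,N\}$ be the maximal subsets with $\sigma(\Delta\times I)\subset U_A$ and $\sigma(\Delta\times\{1\})\subset U_{A'}$; both are nonempty since $\Delta\times\{1\}$ is mapped into the corresponding sets. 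The induction hypothesis then gives $\hat\sigma(\Delta\times\{0\}\cup\partial\Delta\times I)\subset\hat U_A$ and $\hat\sigma(\partial\Delta\times\{1\})\subset\hat U_{A'}$.

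The new ingredient compared with Proposition~\ref{prop:lift} is the verification that $E_A=E_{A'}$, which is what makes Proposition~\ref{prop:shfe} applicable. Here $\mathcal U$ is a good covering of $E$ with every $U_\alpha$ meeting $E$, so by Lemma~\ref{lem:sc} one has $U_A\neq\emptyset\Leftrightarrow U_A\cap E\neq\emptyset$; since $E$ has no proper extremal subsets, this forces $E_A=E$ whenever $U_A\neq\emptyset$, as recorded in the paragraph preceding the proof of Theorem~\ref{thm:ext}(2). Applying this to both $A$ and $A'$ gives $E_A=E_{A'}=E$, so Proposition~\ref{prop:shfe} yields that $\hat U_{A'}\hookrightarrow\hat U_A$ is a homotopy equivalence, and hence $\pi_i(\hat U_A,\hat U_{A'})=0$.

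With this in hand the extension step is identical to that of Proposition~\ref{prop:lift}: regarding $\hat\sigma|_{\Delta\times\{0\}\cup\partial\Delta\times I}$ as an element of $\pi_i(\hat U_A,\hat U_{A'})$ in the natural way, it is null-homotopic in $(\hat U_A,\hat U_{A'})$, and this null-homotopy lets us extend $\hat\sigma$ over $\Delta\times I$ so that $\hat\sigma(\Delta\times I)\subset\hat U_A$ and $\hat\sigma(\Delta\times\{1\})\subset\hat U_{A'}$. Performing this over the simplices of $K\setminus L$ in order of increasing dimension, starting from the given data on $L\times I$, produces the required extension. The only genuinely non-routine point is the equality $E_A=E_{A'}$ just discussed: without the hypothesis that $E$ has no proper extremal subsets, Proposition~\ref{prop:shfe} would only give the homotopy equivalence when the primitive extremal subsets attached to $A$ and $A'$ happen to coincide, and the inductive extension could fail; everything else is a verbatim transcription of the argument for Proposition~\ref{prop:lift}.
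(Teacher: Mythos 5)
Your proposal is correct and follows exactly the route the paper intends: it repeats the inductive argument of Proposition \ref{prop:lift}, with Proposition \ref{prop:shfe} supplying the homotopy equivalence $\hat U_{A'}\hookrightarrow\hat U_A$, and your verification that $E_A=E_{A'}=E$ (via Lemma \ref{lem:sc} and the absence of proper extremal subsets in $E$) is precisely the point the paper records before the proof of Theorem \ref{thm:ext}(2) when it omits this proof as ``almost the same'' as that of Proposition \ref{prop:lift}.
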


The above proposition immediately implies the homotopy lifting property over $E$ with respect to the approximation $\theta:M\to X$ (cf.\ Corollary \ref{cor:lift}).
The proof is the same as that of Corollary \ref{cor:lift} and is omitted.

\begin{cor}\label{cor:lifte}
Suppose $E$ has no proper extremal subsets.
Then, for any $R>0$, there exists $r>0$ such that the following holds provided $\mu$ is small enough:
Let $K$ be a finite simplicial complex and $L$ a subcomplex.
Suppose $\sigma:K\times I\to E$ and $\hat \sigma:K\times\{0\}\cup L\times I\to M$ satisfy $|\sigma,\theta\hat \sigma|<r$ on $K\times\{0\}\cup L\times I$.
Then, $\hat\sigma$ can be extended to a map on $K\times I$ so that $|\sigma,\theta\hat\sigma|<R$ on $K\times I$.
\end{cor}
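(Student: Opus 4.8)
The plan is to transcribe the proof of Corollary~\ref{cor:lift} almost verbatim, the only changes being that a good covering of the whole space $X$ is replaced by a good covering $\mathcal U=\{U_\alpha\}_{\alpha=1}^N$ of the extremal subset $E$ in $X$, and that Proposition~\ref{prop:lifte} is invoked in place of Proposition~\ref{prop:lift}. First I would fix such a good covering of $E$ with $\diam U_\alpha<R/2$ for every $\alpha$ and, exactly as before Proposition~\ref{prop:shfe}, enlarge each lift slightly to $\hat U_\alpha=\{\hat h_\alpha>-\varepsilon\}$ for a fixed $0<\varepsilon\ll R$ without changing the nerve. I would then let $r$ be smaller than half the Lebesgue number of the restricted cover $\{U_\alpha\cap E\}$ of $E$, so that the closeness hypothesis $|\sigma,\theta\hat\sigma|<r$ guarantees that $\sigma$ and $\theta\hat\sigma$ lie in a common $U_\alpha$ over each sufficiently small piece.

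Next I would choose a subdivision $K'$ of $K$ together with a partition $0=t_0<t_1<\dots<t_J=1$ so that each $\sigma(\Delta'\times[t_{j-1},t_j])$ lies in some $U_\alpha$; then, requiring $r$ and $\mu$ to be small compared with $\varepsilon$ but independent of $\sigma$, I would read off from the hypothesis on $K\times\{0\}\cup L\times I$ the implications $\sigma(\Delta'\times\{0\})\subset U_\alpha\Rightarrow\hat\sigma(\Delta'\times\{0\})\subset\hat U_\alpha$ for all $\Delta'\subset K'$, together with the analogous implications over each slab $[t_{j-1},t_j]$ and each level $\{t_j\}$ for $\Delta'\subset L$. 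Feeding these into Proposition~\ref{prop:lifte}, applied to the pair $(K',L)$ over the interval $[t_0,t_1]$, extends $\hat\sigma$ over $K'\times[t_0,t_1]$ so that $\sigma(\Delta'\times[t_0,t_1])\subset U_\alpha$ implies $\hat\sigma(\Delta'\times[t_0,t_1])\subset\hat U_\alpha$ and, crucially, $\sigma(\Delta'\times\{t_1\})\subset U_\alpha$ implies $\hat\sigma(\Delta'\times\{t_1\})\subset\hat U_\alpha$; the latter condition lets the step be iterated over $[t_1,t_2],\dots,[t_{J-1},t_J]$, yielding $\hat\sigma:K\times I\to M$ with $\sigma(\Delta'\times[t_{j-1},t_j])\subset U_\alpha\Longrightarrow\hat\sigma(\Delta'\times[t_{j-1},t_j])\subset\hat U_\alpha$ for all $\Delta'\subset K'$ and $1\le j\le J$.

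Finally, since $\diam U_\alpha<R/2$ and $\theta(\hat U_\alpha)$ differs from $U_\alpha$ only by an error $\ll R$ (coming from the $\mu$-approximation and the enlargement parameter $\varepsilon$), the containments just obtained force $|\sigma,\theta\hat\sigma|<R$ on all of $K\times I$. The only step requiring a moment's care, and the closest thing to an obstacle, is checking that the membership implications hold once $r$ and $\mu$ are small relative to $\varepsilon$, which is verified exactly as in Corollary~\ref{cor:lift}; the sole genuinely new point is that $\sigma$ now takes values in $E$ rather than in $X$, and this is absorbed by Lemma~\ref{lem:sc}, which guarantees that a good covering of $E$ in $X$ restricts to a topological good cover $\{U_\alpha\cap E\}$ of $E$, so that the Lebesgue-number and subdivision arguments go through on $E$ unchanged. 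Hence the proof is a routine adaptation.
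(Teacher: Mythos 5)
Your proposal is correct and is essentially the paper's intended argument: the paper omits the proof of this corollary precisely because it is the same as that of Corollary~\ref{cor:lift}, with a good covering of $E$ in place of one of $X$ and Proposition~\ref{prop:lifte} invoked instead of Proposition~\ref{prop:lift}, which is exactly what you do. The only cosmetic quibble is that the goodness of the restricted cover $\{U_\alpha\cap E\}$ comes from the invariance of extremal subsets under the gradient-flow contractions (Lemma~\ref{lem:sc} is used for the other facts about $U_A\cap E$), but this does not affect the argument.
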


As before, we need the following lemma (cf.\ Lemma \ref{lem:hom}).
The proof is the same as that of Lemma \ref{lem:hom} since the contractions of a good covering given by gradient flows preserve extremal subsets.

\begin{lem}\label{lem:home}
There exists a constant $R_0>0$ such that for any finite simplicial complex $K$ and its subcomplex $L$, any two maps $\sigma_i:(K,L)\to(E,p)$ ($i=0,1$) that are $R_0$-close to each other are homotopic in $E$ relative to $L$.
\end{lem}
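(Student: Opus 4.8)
The plan is to copy the proof of Lemma \ref{lem:hom} verbatim, working with a good covering of $E$ inside $X$ in place of a good covering of $X$, and exploiting that its restriction to $E$ is a good cover of $E$ in the topological sense. First I would fix a good covering $\mathcal U=\{U_\alpha\}_{\alpha=1}^N$ of $E$ in $X$ (with $U_\alpha\cap E\neq\emptyset$ for all $\alpha$) and let $R_0$ be half the Lebesgue number of the restricted open cover $\{U_\alpha\cap E\}_{\alpha=1}^N$ of the compact set $E$. Given $\sigma_0,\sigma_1:(K,L)\to(E,p)$ that are $R_0$-close, I would choose a subdivision $K'$ of $K$ so fine that for every simplex $\Delta'\subset K'$ there is an index $\alpha$ with $\sigma_0(\Delta')\cup\sigma_1(\Delta')\subset U_\alpha\cap E$; for each $\Delta'$ let $A(\Delta')$ be the nonempty set of all such $\alpha$, so that $\sigma_0(\Delta')\cup\sigma_1(\Delta')\subset U_{A(\Delta')}\cap E$.

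Then I would build the homotopy $\sigma_t:(K,L)\to(E,p)$ ($0\le t\le 1$) by induction over the skeleta of $K'$, exactly as in Lemma \ref{lem:hom}: over a simplex $\Delta'$ the maps $\sigma_0$, $\sigma_1$, together with the already-defined $\sigma_t$ on $\partial\Delta'\times I$, all land in $U_{A(\Delta')}\cap E$, and since every nonempty intersection $U_A\cap E$ is contractible, the map on $\Delta'\times\{0,1\}\cup\partial\Delta'\times I$ extends over $\Delta'\times I$ inside $U_{A(\Delta')}\cap E$. Keeping track of the value in $U_{A(\Delta')}$ rather than in a single $U_\alpha$ makes the extensions over shared faces automatically compatible. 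On $L$ there is nothing to do, since $\sigma_0|_L=\sigma_1|_L=p$, and we simply set $\sigma_t|_L\equiv p$ throughout, which gives a homotopy relative to $L$.

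The only ingredient that is not pure bookkeeping is the contractibility of the nonempty sets $U_A\cap E$, i.e.\ that the restricted cover $\{U_\alpha\cap E\}$ is good in the topological sense; but this has already been recorded earlier in this section and follows from the fact that the strong Lipschitz contractions of $\mathcal U$ are gradient flows, which preserve the extremal subset $E$ (cf.\ Remark \ref{rem:ext} and Theorem \ref{thm:good}). Granting this, there is no real obstacle: the skeleton-by-skeleton filling is the standard nerve-theoretic argument, identical to the proof of Lemma \ref{lem:hom}, the point being only that the homotopy can now be carried out inside $E$ rather than merely inside $X$.
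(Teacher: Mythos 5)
Your proof is correct and follows essentially the same route as the paper: the paper simply repeats the argument of Lemma \ref{lem:hom} for a good covering of $E$, noting that the gradient-flow contractions preserve extremal subsets so the restricted cover $\{U_\alpha\cap E\}$ has contractible nonempty intersections. Your extra bookkeeping with $U_{A(\Delta')}$ is just a slightly more explicit version of the same skeleton-by-skeleton filling.
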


We also need the following additional lemma for the proof of Theorem \ref{thm:ext}(1).
The proof is essentially included in that of Lemma \ref{lem:sc}.

\begin{lem}[{\cite[3.1(2)]{PP:ext}}]\label{lem:dist}
There exists a constant $\varepsilon>0$ such that the distance function $\dist_E$ is $\varepsilon$-regular on $\bar U(E,\varepsilon)\setminus E$, i.e.\ $|\nabla\dist_E|>\varepsilon$ (the choice of $\varepsilon$ depends only on $k$, $\kappa$, an upper bound on the diameter of $X$, and a lower bound on the volume of $X$).
\end{lem}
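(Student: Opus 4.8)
The plan is to extract this from the proof of Lemma~\ref{lem:sc}, as already indicated, by converting the lower bound obtained there for $d_xh(\uparrow_x^E)$ into a lower bound for $|\nabla_x\dist_E|$, with $-\nabla_xh$ playing the role of a test direction. Since $E$ is compact, I would first cover it by finitely many balls $B(p_1,r_1),\dots,B(p_m,r_m)$ with $p_i\in E$, each carrying a strictly concave function $h_i$ as constructed in Theorem~\ref{thm:sc} and treated in Lemma~\ref{lem:sc}. Shrinking if necessary, we may assume $\bar U(E,\varepsilon)\subset\bigcup_i B(p_i,r_i)$ for the $\varepsilon$ still to be chosen, so it suffices to bound $|\nabla_x\dist_E|$ from below at a fixed $x\in\bar U(E,\varepsilon)\setminus E$; fix an index $i$ with $x\in B(p_i,r_i)$.

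The key observation is that the estimate in the proof of Lemma~\ref{lem:sc} in fact gives $d_xh_i(\xi)>c_{p_i}$ for \emph{every} $\xi\in\Uparrow_x^E$, not only for a single chosen direction $\uparrow_x^E$: that argument uses only that the relevant point $y\in E$ is a closest point of $E$ to $x$, hence --- since $x\notin E$ and $E$ is extremal --- a critical point of $\dist_x$, which is precisely what forces the comparison angles $\tilde\angle q_{\alpha\beta}xy$ (for the auxiliary points $q_{\alpha\beta}$ of that construction) to be close to $\pi/2$ from below along the direction in question. By the defining inequality $d_xh_i(v)\le\langle\nabla_xh_i,v\rangle$ of the gradient we then get $\langle\nabla_xh_i,\xi\rangle>c_{p_i}$ for all $\xi\in\Uparrow_x^E$; in particular $\nabla_xh_i\ne o$. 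Since $h_i$ is a minimum of averages of functions $\varphi_{R,r}\circ\dist_q$ with $|\varphi_{R,r}'|$ uniformly bounded, $|\nabla_xh_i|\le C$ for a universal constant $C$, so the unit vector $\xi_0:=-\nabla_xh_i/|\nabla_xh_i|\in\Sigma_x$ satisfies $\langle\eta,\xi_0\rangle<-c_{p_i}/C$ for every $\eta\in\Uparrow_x^E$. Consequently
\[|\nabla_x\dist_E|\ge d_x\dist_E(\xi_0)=-\langle\Uparrow_x^E,\xi_0\rangle>c_{p_i}/C,\]
and $\varepsilon:=\min_i c_{p_i}/C$, made smaller if needed so that the covering inclusion above holds, works on all of $\bar U(E,\varepsilon)\setminus E$.

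The main obstacle is not this local estimate --- which is little more than a rephrasing of Lemma~\ref{lem:sc}, the one new ingredient being the choice of $-\nabla_xh_i$ as test direction --- but the asserted uniformity: that $\varepsilon$ can be taken to depend only on $k$, $\kappa$, an upper bound on $\diam X$, and a lower bound on $\vol X$. For this I would use that, by Lemma~\ref{lem:sc}, each $c_{p_i}$ depends only on $k$ and $\vol\Sigma_{p_i}$; that $\vol\Sigma_p$ is bounded below in the stated quantities, via the Bishop--Gromov inequality (comparing $\vol B(p,\rho)$ with $\vol X$ and with $\tfrac1k\vol\Sigma_p\cdot\rho^k$ as $\rho\to0$); and that the radii $r_i$ on which admissible strictly concave functions of the required type exist are uniformly bounded below by precompactness of the class. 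Granting these, $\min_i c_{p_i}/C$ and the covering radius are both controlled in the claimed data, so $\varepsilon$ can be chosen as asserted.
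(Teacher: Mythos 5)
The decisive step of your argument is unjustified, and it sits exactly where the real content of the lemma lies. After establishing $\langle\nabla_xh_i,\eta\rangle>c_{p_i}$ for all $\eta\in\Uparrow_x^E$ (this part is fine, as is the Lipschitz bound $|\nabla_xh_i|\le C$), you pass to ``the unit vector $\xi_0:=-\nabla_xh_i/|\nabla_xh_i|\in\Sigma_x$''. In an Alexandrov space the tangent cone $T_x$ has no linear structure: $-v$ is undefined, and a direction making angle $\pi$ (or even $\pi-\delta$) with a prescribed direction of $\Sigma_x$ need not exist --- $\Sigma_x$ may have small diameter, or be hemisphere-like, at the singular points one must allow near $E$. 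What your step 3 gives is only that $\Uparrow_x^E$ lies in a ball of radius $<\pi/2-\delta$ about $\xi_1=\nabla_xh_i/|\nabla_xh_i|$, i.e.\ a direction of definite \emph{decrease} of $\dist_E$ (this is precisely what Lemma \ref{lem:sc} uses for the flow argument). To bound $|\nabla_x\dist_E|$ from below you must exhibit a direction of definite \emph{increase}, i.e.\ some $\xi$ with $\angle(\xi,\eta)>\pi/2+\delta$ for \emph{all} $\eta\in\Uparrow_x^E$; this does not follow formally from the decrease estimate (for a general semiconcave $f$, steep directional decrease is compatible with $\nabla_xf=o$, e.g.\ $f=-|t|$ on $\mathbb R$ at $0$), and your $\xi_0$ is introduced by fiat. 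So the proposal begs the question at its key point; note the paper itself does not rederive the statement but quotes \cite[3.1(2)]{PP:ext}, with Lemma \ref{lem:sc} supplying the angle estimate $\tilde\angle q_{\alpha\beta}xy\ge\pi/2-\varepsilon$ at closest points, not the existence of an increase direction.

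A secondary issue is the claimed uniformity. Lemma \ref{lem:sc} gives $c_p$ in terms of $k$ and $\vol\Sigma_p$, and $\vol\Sigma_p$ is indeed bounded below by Bishop--Gromov in the stated data; but the radius $r$ (and the scale $R$ at which $R^{-1}B(p,R)$ is close to the unit ball of $T_p$) on which the construction and the estimate are valid is \emph{not} uniformly bounded below over the class, and ``precompactness of the class'' does not supply such a bound. For the application in this paper only a fixed $X$ is needed, so this would be forgivable, but as written it does not establish the parenthetical dependence on $k$, $\kappa$, $\diam X$, and $\vol X$; and in any case the antipode step above already blocks the fixed-$X$ version of your argument.
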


We are now in a position to prove Theorem \ref{thm:ext}(1).
The proof is almost the same as that of Theorem \ref{thm:main}(1) except that Lemma \ref{lem:dist} is used.

\begin{proof}[Proof of Theorem \ref{thm:ext}(2)]

Let us first define some constants determined by $X$.
Fix fiber data $(p,\tilde r)$ on $E$.
Let $\varepsilon>0$ and $R_0>0$ be the constants of Lemma \ref{lem:dist} and Lemma \ref{lem:home}, respectively.
Find $r_0>0$ such that $r_0\ll\min\{\tilde r,\varepsilon\}$ and $r_0<R_0/2$.
Take a good covering $\mathcal U=\{U_\alpha\}_{\alpha=1}^N$ of $E$ such that $\diam U_\alpha<r_0$ for any $\alpha$.
Let $L(\mathcal U)$ denote the Lebesgue number of $\mathcal U$ and define $\rho<L(\mathcal U)/10$.
Finally, choose $r\ll\rho$ so that $B(p,r)\Subset U(E,\rho)$.

Now suppose $\mu\ll r$ is small enough and take a regular fiber $F:=\bar B(\hat p,r)$.
Then the fibration theorem \ref{thm:adm} shows that $B(p,\tilde r)$ admits a deformation retraction onto $F$.
Similarly $U(\hat E,\varepsilon)$ admits a deformation retraction onto $\bar U(\hat E,\rho/2))$.
Let
\[\sigma:(I^i,\partial I^i)\to (E,p),\quad\hat\sigma:(I^i,\partial I^i,J^{i-1})\to(U(\hat E,\rho),F,\hat p),\]
where $I^i=[0,1]^i$ and $J^{i-1}=\partial I^i\setminus I^{i-1}\times\{1\}$ as before.
We say that $\hat\sigma$ is a \textit{lift} of $\sigma$, or $\sigma$ is a \textit{projection} of $\hat\sigma$, if $|\sigma,\theta\hat\sigma|<r_0$.
Then the desired isomorphism $\pi_i(E,p)\cong\pi_i(U(\hat E,\rho),F,\hat p)$ is given by the correspondence $[\sigma]\leftrightarrow[\hat\sigma]$.

\begin{proof}[Existence of $\sigma$]\renewcommand{\qedsymbol}{}
Given $\hat\sigma$, choose a triangulation $T$ of $I^i$ so that $\diam\hat\sigma(\Delta)<L(\mathcal U)/10$ for any simplex $\Delta\subset T$.
For any vertex $v\in T\setminus\partial I^i$, define $\sigma(v)$ as a point on $E$ closest to $\theta(\hat\sigma(v))$ and $\sigma|_{\partial I^i}:\equiv p$.
Since $\rho<L(\mathcal U)/10$, the images of the vertices of any simplex under $\sigma$ are contained in some $U_\alpha$.
Thus we can define $\sigma$ inductively on the skeleta of $T$ so that
\[\sigma(\partial\Delta)\subset U_\alpha\cap E\Longrightarrow\sigma(\Delta)\subset U_\alpha\cap E\]
for any $\Delta\subset T$ by using the contractibility of $\mathcal U$.
Then $\sigma$ is a projection of $\hat\sigma$ since $\diam U_\alpha< r_0$.
\end{proof}

\begin{proof}[Well-definedness of ${[\sigma]}$]\renewcommand{\qedsymbol}{}
Lemma \ref{lem:home} immediately implies that $\hat\sigma\mapsto[\sigma]$ is well-defined.
Furthermore, the same argument as above shows that $[\hat\sigma]\mapsto[\sigma]$ is well-defined.
\end{proof}

\begin{proof}[Existence of $\hat\sigma$]\renewcommand{\qedsymbol}{}
Given $\sigma$, define $\hat\sigma$ on $I^{i-1}\times\{0\}\cup\partial I^{i-1}\times I$ by the constant map to $\hat p$ and apply Corollary \ref{cor:lifte} to it.
Then $\hat\sigma$ extends over $I^i$ so that $|\sigma,\theta\hat\sigma|<r/2$ since $\mu\ll r$.
In particular $\hat\sigma(I^{i-1}\times\{1\})\subset F$ and $\hat\sigma(I^i)\subset U(\hat E,\rho)$.
\end{proof}

\begin{proof}[Well-definedness of ${[\hat\sigma]}$]\renewcommand{\qedsymbol}{}
Let $H:(I^i,\partial I^i)\times I\to(E,p)$ be a homotopy between $\sigma_0$ and $\sigma_1$, and let $\hat\sigma_0$ and $\hat\sigma_1$ be arbitrary lifts of them.
Define $\hat H:\partial I^{i+1}\setminus(I^{i-1}\times\{1\}\times I)\to M$ by
\[\hat H|_{I^i\times\{0\}}:=\hat\sigma_0,\quad\hat H|_{I^i\times\{1\}}:=\hat\sigma_1,\quad\hat H|_{J^{i-1}\times I}:\equiv\hat p.\]
By Corollary \ref{cor:lifte}, $\hat H$ extends over $I^{i+1}$ so that $|H,\theta\hat H|<\min\{\tilde r,\varepsilon\}/2$ since $r_0\ll\min\{\tilde r,\varepsilon\}$.
In particular $\hat H(I^{i-1}\times\{1\}\times I)\subset B(\hat p,\tilde r)$ and  $\hat H(I^{i+1})\subset U(\hat E,\varepsilon)$.
Using the deformation retractions mentioned above, one can deform it so that $\hat H(I^{i-1}\times\{1\}\times I)\subset F$ and $\hat H(I^{i+1})\subset U(\hat E,\rho)$.
\end{proof}

Therefore the map $[\sigma]\mapsto[\hat\sigma]$ is a well-defined isomorphism.
This completes the proof of Theorem \ref{thm:ext}(2).
\end{proof}

\end{document}